\theoremstyle{definition}
\newtheorem{theorem}{Theorem}[section]
\newtheorem{definition}[theorem]{Definition}
\newtheorem*{notation*}{Notation}
\newtheorem{remark}[theorem]{Remark}
\newtheorem{corollary}[theorem]{Corollary}
\newtheorem{proposition}[theorem]{Proposition}
\newtheorem{lemma}[theorem]{Lemma}
\newtheorem{claim}[theorem]{Claim}
\newtheorem*{question*}{Question I}
\newtheorem*{question'}{Question II}
\newtheorem*{question''}{Question III}
\newtheorem*{theorem*}{Theorem}
\newcommand{\ZZ}{{\mathbb Z}}
\newcommand{\NN}{{\mathbb N}}
\newcommand{\RR}{{\mathbb R}}
\newcommand{\QQ}{{\mathbb Q}}
\newcommand{\PP}{{\mathbb P}}
\newcommand{\bbS}{{\mathbb S}}
\newcommand{\bbC}{\mathbb{C}}
\newcommand{\bbN}{\mathbb{N}}
\newcommand{\bbP}{\mathbb{P}}
\newcommand{\bbQ}{\mathbb{Q}}
\newcommand{\bbR}{\mathbb{R}}
\newcommand{\bbZ}{\mathbb{Z}}
\newcommand{\bbG}{\mathbb{G}}
\newcommand{\bbT}{\mathbb{T}}
\newcommand{\cX}{{\mathcal X}}
\newcommand{\cO}{{\mathcal O}}
\newcommand{\cL}{{\mathcal L}}
\newcommand{\cA}{{\mathcal A}}
\newcommand{\sL}{\mathcal{L}}
\newcommand{\sO}{\mathcal{O}}
\newcommand{\sR}{\mathcal{R}}
\newcommand{\sX}{\mathcal{X}}
\newcommand{\sY}{\mathcal{Y}}
\DeclareMathOperator{\Spec}{Spec}
\DeclareMathOperator{\Sym}{Sym}
\DeclareMathOperator{\vol}{vol}
\DeclareMathOperator{\qf}{qf}
\DeclareMathOperator{\MI}{MI}
\DeclareMathOperator{\MV}{MV}
\DeclareMathOperator{\GVF}{GVF}
\DeclareMathOperator{\an}{an}
\DeclareMathOperator{\FS}{FS}
\DeclareMathOperator{\trop}{trop}
\DeclareMathOperator{\cdiv}{div} 
\DeclareMathOperator{\AdivQ}{ADiv_{\QQ}}
\DeclareMathOperator{\LdivQ}{LDiv_{\QQ}}
\DeclareMathOperator{\LPic}{LPic}
\DeclareMathOperator{\LPicQ}{LPic_{\QQ}}
\DeclareMathOperator{\Pic}{Pic}
\DeclareMathOperator{\ZDiv}{ZDiv} 
\DeclareMathOperator{\ZDivQ}{ZDiv_{\QQ}} 
\DeclareMathOperator{\height}{ht}
\DeclareMathOperator{\M}{M}
\DeclareMathOperator{\can}{can}
\DeclareSymbolFont{yhlargesymbols}{OMX}{yhex}{m}{n} \DeclareMathAccent{\yhwidehat}{\mathord}{yhlargesymbols}{"62}
\newcommand{\Qbar}{\ov{\mathbb{Q}}}
\newcommand{\Pvariety}{X}
\newcommand{\ac}{\widehat{c}}
\newcommand{\Field}{F}
\newcommand{\adiv}{\widehat{\cdiv}}
\newcommand{\aPicQ}{\widehat{\Pic}_\bbQ}
\newcommand{\intPicQ}{\widehat{\Pic}^{\text{int}}_\bbQ}
\newcommand{\adeg}{\widehat{\deg}}
\newcommand{\GVFQ}{\ov{\QQ}}
\newcommand{\GVFan}[2][]{\ensuremath{#2_{\GVF\ifx&#1& \else ,#1 \fi}}}
\newcommand{\ov}{\overline}
\newcommand{\meet}{\wedge}
\newcommand{\blank}{{-}}
\let\svthefootnote\thefootnote
\newcommand\freefootnote[1]{%
  \let\thefootnote\relax%
  \footnotetext{#1}%
  \let\thefootnote\svthefootnote%
}
\newcommand{\Kbar}{\ov{K}}
\title[Continuity of heights \& complete intersections in toric varieties]{Continuity of heights in families and complete intersections in toric varieties}
\date{}
\author{Pablo Destic}
\address{Institut Camille Jordan, Université Claude Bernard Lyon 1, 43 boulevard du 11 novembre 1918 F-69622 Villeurbanne Cedex, France}
\email{\url{destic@math.univ-lyon1.fr}}
\author{Nuno Hultberg}
\address{Institute for Theoretical Sciences, Westlake University, No. 600 Dunyu Road, Sandun town, Xihu district, Hangzhou, Zhejiang Province, 310030 China}
\email{\url{nuno.hultberg@outlook.com}}
\author{Michał Szachniewicz}
\address{Mathematical Institute, University of Oxford, Oxford, OX2 6GG, UK}
\email{\url{michal.szachniewicz@maths.ox.ac.uk}}
\subjclass[2020]{Primary 14G40 Secondary 14M25, 52B20, 03C66}
\keywords{toric variety, Arakelov geometry, globally valued field, limit height, Newton polytope, Ronkin metric}
\begin{document}

\begin{abstract}
    We study the variation of heights of cycles in flat families over number fields or, more generally, globally valued fields. To a finite type scheme $S$ over a GVF $K$ we associate a locally compact Hausdorff space $S_{\GVF}$ which we refer to as the GVF analytification of $S$. For a flat projective family $\cX \subset \PP_S^n \to S$, we prove that $(s \in S_{\GVF}) \mapsto \height(\cX_s)$ is continuous.
    
    As an application, we prove Roberto Gualdi's conjecture on limit heights of complete intersections in toric varieties.
\end{abstract}

\maketitle

\tableofcontents

\freefootnote{An earlier version of this article was included in the second author's PhD thesis. The main difference is the additional treatment of the trivial valuation.}

\section{Introduction}
In classical algebraic geometry Bezout's theorem states that generically the intersection of $n$ hypersurfaces of degrees $d_1,\dots,d_r$ in $\bbP^n$ is of degree $d_1\dots d_r$. If the hypersurfaces are defined over a number field, one may ask whether it is possible to compute the Weil height of the intersection in terms of their arithmetic complexity. This is important, because estimates on Weil heights can lead to finiteness theorems about rational points. A striking example of this philosophy appears in (Proposition 2.17 of) \cite{faltings1991diophantine}. Another example is the arithmetic Bezout's theorem obtained in \cite{bost1994heights}, however it only gives an upper bound for the height of an intersection. Since Weil heights are also connected to special values of L-functions and periods (see e.g. Section 4 of \cite{pazuki2024northcott}, or \cite{maillot2000geometrie, CASSAIGNE2000226}) it is desirable to have formulas for their exact values. The starting point of our considerations is \cite{gualdi_hypersurfaces_in_toric_varieties}, where such a formula was given, for the height of a single hypersurface ($r=1$) in a toric variety, with respect to a semipositively metrized toric divisor.

However, Roberto Gualdi observed in his thesis \cite{gualdi:tel-01931089} that it is not possible to extend a result of the same nature to $r\geq2$. In fact, he gives examples of polynomials with the same associated arithmetic data, defining cycles of differing heights. As a remedy to this issue, he suggested to consider average heights of cycles with prescribed arithmetic data and formulated the following conjecture, which we prove in this article.

\begin{theorem}[Theorem~\ref{thm:gualdi_conjecture}]
    Let $f_1, \dots, f_m$ be Laurent polynomials in $n$ variables with coefficients in a number field $K$ and let $T$ be a proper toric variety with torus $\bbT=\bbG^n \subset T$. Denote by $V_i$ the hypersurface defined by $f_i$ and by $\rho_i$ its Ronkin function. Let $(\zeta_{1,j},\dots,\zeta_{m,j})_j$ be a generic sequence of small points in $\bbT^m$ for the Weil height and let $\ov{D}_0,\dots,\ov{D}_{n-m}$ be semipositive toric adelic divisors on $T$ with associated local roof functions $\theta_{0,v}, \dots, \theta_{n-m,v}$. Then,
    \[
        \lim_{j\to\infty} \adeg(\ov{D}_0, \dots,\ov{D}_{n-m}\mid \zeta_{1,j}V_1\cap\dots\cap\zeta_{m,j}V_m)
    \]
    \[
        =\sum_{v\in \M_K} n_v \MI_M(\theta_{0,v}, \dots, \theta_{n-m,v}, \rho_1^\vee, \dots, \rho_m^\vee).
    \]
\end{theorem}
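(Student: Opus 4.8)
The plan is to present the cycles $\zeta_{1,j}V_1\cap\dots\cap\zeta_{m,j}V_m$ as the fibres of a single flat projective family, to apply the continuity theorem, and thereby reduce the statement to the computation of one height over an explicit globally valued field. Let $\cX\subseteq T\times\bbT^m$ be the incidence scheme $\{(x,(\zeta_1,\dots,\zeta_m)):x\in\zeta_iV_i\text{ for all }i\}$, and let $S\subseteq\bbT^m$ be the dense open locus over which the fibres have the expected dimension $n-m$ and constant Hilbert polynomial, so that $\cX|_S\to S$ is flat. Since any proper toric variety is dominated by a projective one, and the arithmetic intersection numbers in question are insensitive to a toric modification (the relevant cycles meet the torus), we may assume $T$, hence $\cX|_S$, is projective and $\cX|_S\hookrightarrow\PP^N_S$. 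By multilinearity of the arithmetic intersection product, adjoining a fixed ample toric adelic divisor, and uniform approximation of semipositive metrics by very ample model metrics, the mixed height $\adeg(\ov{D}_0,\dots,\ov{D}_{n-m}\mid\cdot)$ becomes a fixed finite linear combination of heights with respect to single ample semipositive adelic divisors, so the main theorem applies and shows that $s\mapsto\adeg(\ov{D}_0,\dots,\ov{D}_{n-m}\mid\cX_s)$ is continuous on $S_{\GVF}$. Genericity of $(\zeta_{1,j},\dots,\zeta_{m,j})$ ensures that it lies in $S$ for $j\gg 0$, giving points $s_j\in S_{\GVF}$.

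That the sequence is small and generic for the Weil height means, by Bilu's equidistribution theorem, that at every $v\in\M_K$ the Galois orbits of the $\zeta_{i,j}$ converge to the canonical measure $\mu_v$ on $\bbT^{\mathrm{an}}_v$: Haar measure of the compact torus if $v$ is archimedean, the Dirac mass at the Gauss point otherwise. I would use this to show that $s_j\to s_\infty$ in $S_{\GVF}$, where $s_\infty$ is the generic point of $\bbT^m$ carrying the \emph{canonical} GVF structure --- the places of $K(\bbT^m)$ above $v$ are the points of $(\bbT^{\mathrm{an}}_v)^m$ with weight $\mu_v^{\otimes m}$ and $\lvert g\rvert_{(v,\zeta)}=\lvert g(\zeta)\rvert_v$, the product formula being the vanishing of canonical heights of principal divisors on the torus. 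By continuity the left-hand side of the theorem then equals $\adeg(\ov{D}_0,\dots,\ov{D}_{n-m}\mid\cX_{s_\infty})$, the height over $s_\infty$ of the generic translated complete intersection $\mathbf{t}_1V_1\cap\dots\cap\mathbf{t}_mV_m$.

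To compute this, let $\ov{L}_i$ be the semipositive toric adelic line bundle on $T$ with rational section $\sigma_i$ cutting out $V_i$ (up to a torus-invariant boundary divisor, whose contribution is combinatorial) and with local roof functions $\rho_i^\vee$, as provided by Gualdi's analysis of a single Laurent polynomial \cite{gualdi_hypersurfaces_in_toric_varieties}. Peeling off the factors $\mathbf{t}_iV_i$ one at a time by the arithmetic Poincar\'e--Lelong formula gives
\[
\adeg(\ov{D}_0,\dots,\ov{D}_{n-m}\mid\cX_{s_\infty})=\adeg(\ov{D}_0,\dots,\ov{D}_{n-m},\ov{L}_1,\dots,\ov{L}_m\mid T_{s_\infty})-\sum_{i=1}^{m}\int\bigl(-\log\lVert\sigma_i'\rVert\bigr)\,\omega_i ,
\]
where $\sigma_i'$ is the translate of $\sigma_i$ and $\omega_i$ is the product of the Chern forms of the toric metrics on $\ov{D}_0,\dots,\ov{D}_{n-m},\ov{L}_1,\dots,\ov{L}_{i-1}$, integrated over the intermediate cycle $\mathbf{t}_{i+1}V_{i+1}\cap\dots\cap\mathbf{t}_mV_m$. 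Every correction vanishes: at each $v$ one has $\int_{\bbT^{\mathrm{an}}_v}(-\log\lVert\sigma_i\rVert_v)\,d\mu_v=0$ directly from the definition of the Ronkin function (the average of $\log\lvert f_i\rvert_v$ over the compact torus, resp.\ the Gauss valuation) together with the support of $\mu_v$; and the forms $\omega_i$ are invariant under the compact torus. Averaging over the generic translation parameters $\mathbf{t}_i$ carried by $s_\infty$, a change of variables using this invariance and the translation invariance of $\mu_v$ reduces each correction to a multiple of these vanishing integrals. This is precisely the point where averaging repairs Gualdi's counterexamples: for a fixed configuration the cycle $\mathbf{t}_{i+1}V_{i+1}\cap\dots\cap\mathbf{t}_mV_m$ is replaced by a non-torus-invariant one and the change of variables is no longer available. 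Finally $\adeg(\ov{D}_0,\dots,\ov{D}_{n-m},\ov{L}_1,\dots,\ov{L}_m\mid T_{s_\infty})$ is the height of the toric variety $T$ for semipositive toric metrised divisors whose roof functions are constant along each fibre $(\bbT^{\mathrm{an}}_v)^m$ and equal to $\theta_{0,v},\dots,\theta_{n-m,v},\rho_1^\vee,\dots,\rho_m^\vee$; the arithmetic Bernstein--Kushnirenko formula of Burgos, Philippon and Sombra evaluates it as $\sum_{v\in\M_K}n_v\,\MI_M(\theta_{0,v},\dots,\theta_{n-m,v},\rho_1^\vee,\dots,\rho_m^\vee)$, the right-hand side.

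The continuity theorem does the analytic work of interchanging the limit with the height, so the remaining content is twofold, and I expect the first part to be the main obstacle: proving rigorously that a small generic sequence of algebraic points converges in $S_{\GVF}$ to the canonical point $s_\infty$, that is, recasting Bilu-type equidistribution as convergence in the topology of the GVF analytification (and checking the behaviour at the trivial valuation). The second part is technical rather than deep: transporting the Poincar\'e--Lelong peeling and the Burgos--Philippon--Sombra formula from number fields to the globally valued field $s_\infty$, and carrying out the boundary-divisor and $\RR$-coefficient bookkeeping hidden in the polarisation and in the choice of $\ov{L}_i$.
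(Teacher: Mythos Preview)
Your architecture matches the paper's: build the incidence family $V\subset T\times\bbT^m$, restrict to the flat locus, invoke the continuity theorem, and reduce to computing one height over the limiting GVF structure on $K(\bbT^m)$. The Ronkin-vanishing mechanism you describe for the correction terms (Fubini over the translation parameter $\mathbf t_i$, then the definition of $\rho_i$ as a fibre average of $-\log|f_i|$) is exactly the content of the paper's Theorem~\ref{thm:ronkin_integral_vanishing}. The two obstacles you flag, however, are each handled by the paper in a way that sidesteps the work you anticipate.

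For the convergence $s_j\to s_\infty$, the paper does not go through Bilu. It uses Yuan's equidistribution in the form of Corollary~\ref{corollary:Yuan_equidistribution_as_GVF_convergence_for_projective_space}: the limit is the \emph{polarised} GVF structure coming from $(\prod_i\PP^n,\,\pi_1^*\ov{\cO(1)}^n,\dots,\pi_m^*\ov{\cO(1)}^n)$, characterised as the unique extension of the standard structure on $\Qbar$ with $\height(w_{i,j})=0$. Yuan directly gives $h_{\ov M}(s_j)\to \ov{\cO(1)}^{nm}\cdot\ov M$ for every adelic $\ov M$, which is precisely convergence in $S_{\GVF}$; Bilu gives local measure convergence, and promoting that to convergence of all global heights is an extra (and unnecessary) step. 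For the computation at $s_\infty$, the paper avoids transporting Poincar\'e--Lelong and BPS to the GVF setting altogether: since $s_\infty$ is polarised, Proposition~\ref{proposition_calculating_generic_intersection} rewrites $\adeg(\ov D_0,\dots,\ov D_{n-m}\mid\cX_{s_\infty})$ as an ordinary Arakelov intersection number on the ambient $T\times\prod_i\PP^n$ over the number field $K$. Your Poincar\'e--Lelong peeling is then done there (Theorem~\ref{thm:ronkin_integral_vanishing}), and the descent to $T$ is a convex pushforward identity for mixed integrals (Lemma~\ref{lemm:convex_pushforward}, Lemma~\ref{lemm:intersection_pushforward}) rather than an appeal to BPS over $s_\infty$. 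So neither of your anticipated obstacles is actually confronted; the polarisation description of $s_\infty$ buys the reduction to number-field Arakelov theory for free. Your detour through ``single ample semipositive'' divisors in the first paragraph is also unnecessary: Theorem~\ref{theorem:Zhang_divisors_are_induced_by_global_divisors} shows integrable Zhang divisors are already in $\intPicQ(\cX/S)$, so the continuity theorem applies directly.
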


The original conjecture can be found in \cite[Conjecture 6.4.4]{gualdi:tel-01931089}. Let us briefly describe its contents. The arithmetic degree of a suitably generic complete intersection subvariety in a toric variety can be computed as an arithmetic intersection number of adelic toric divisors. The formulation of the conjecture is in terms of a convex geometry identity for this intersection number involving mixed integrals. Mixed integrals and further convex geometry tools are discussed in Section \ref{sec:convex_geometry}.

Roberto Gualdi and Martin Sombra have together proved partial results in this direction. In \cite{gualdi_sombra_limit_heights} they prove the above result in the case $T=\bbP^2$, $m=2$ and $f_1(x_1,x_2) = f_2(x_1,x_2) = x_1+x_2+1$. Moreover, for this example, they compute that both sides of the identity in question are equal to the intriguing value $\frac{2\zeta(3)}{3\zeta(2)}$. They have also solved the $m=2$ case of the conjecture in \cite{gualdi_sombra_codimension2}. Their methods fundamentally differ from ours. In particular, their approach relies on local logarithmic equidistribution as in \cite{dimitrov_habegger_galois_orbits_of_torsion} while we modify the problem in such a way that we can apply Yuan's equidistribution theorem from \cite{Yuan_big_line_arithmetic_bundles_Siu_inequality}.

Said modification can be conveniently phrased in the framework of globally valued fields (abbreviated GVF). Globally valued fields were introduced by Ben Yaacov and Hrushovski to serve as a theory of fields with multiple valuations satisfying the product formula. As such, globally valued fields are closely related to proper adelic curves as defined and developed in \cite{Adelic_curves_1} by Chen and Moriwaki, see \cite[Corollary 1.3]{basics_of_gvfs} for the precise relationship. Globally valued fields, however, form a theory in unbounded continuous logic from \cite{Ben_Yaacov_unbdd_cont_FOL}, and are therefore amenable to methods from model theory.

The original motivation of this article was to prove the definability of intersection products of divisors over globally valued fields, i.e., that arithmetic intersection numbers parametrized over a base form a quantifier-free definable formula. For the benefit of Arakelov geometers reading this article, we will phrase this as a continuity result.\footnote{We thank R\'emi Reboulet for pointing out the similarity to the existence of a Deligne pairing. In fact, in the number field case our result is likely to be implied by the Deligne pairing in \cite[Theorem 4.1.3]{yuan_zhang_adeliclinebundlesquasiprojective}. Our approach has the advantage of its simplicity.} First, let us state a version of the `continuity of heights' over $\Qbar$ that does not require any additional definitions.

\begin{theorem}
    Let $\pi:\sX \to S$ be a surjective morphism of projective varieties over $\Qbar$, generically of relative dimension $d$. Let $(s_i)_i$ be a generic sequence of points in $S(\Qbar)$ such that for every adelic line bundle $\ov{M}$ on $S$ the value $h_{\ov{M}}(s_i)$ converges. Let $\ov{L}_0,\ldots, \ov{L}_d$ be integrable line bundles on $\sX$.
    Then, the arithmetic intersection number on fibers
    \[
    \adeg(\ov{L}_0 \ldots \ov{L}_d|\cX_{s_i})
    \]
    converges. Its limit can be described as a certain arithmetic intersection number over a GVF.
    
    If $h_{\ov{M}}(s_i)$ converges to $\ov{L}^{\dim S}\ov{M}$ for some arithmetically nef line bundle $\ov{L}$ and every adelically metrized line bundle $\ov{M}$, then $\adeg(\ov{L}_0 \ldots \ov{L}_d|\cX_{s_i})$ converges to the intersection number $(\pi^*\ov{L})^{\dim S}\ov{L}_0\ldots \ov{L}_d$.
\end{theorem}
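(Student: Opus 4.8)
The plan is to deduce this statement from the continuity theorem for heights of fibers (the main theorem above, applied to a projective embedding of $\sX$) together with the construction of the GVF analytification. The key point is that a generic sequence $(s_i)_i$ in $S(\Qbar)$ along which all adelic heights converge determines a genuine point $s_\infty\in S_{\GVF}$ lying over the generic point of $S$, whose residue field is a GVF extension $\bbK\supseteq\Qbar$ with $\trdeg_{\Qbar}(\bbK)=\dim S$; the sought-after limit is then the arithmetic intersection number of the restrictions $\ov L_j|_{\sX_{s_\infty}}$ on the $d$-dimensional projective $\bbK$-variety $\sX_{s_\infty}=\sX\times_S\Spec\bbK$. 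This is also the step I expect to be the main obstacle: one must know that the height functions $s\mapsto h_{\ov M}(s)$ are continuous on $S_{\GVF}$ and separate its points, that height-bounded loci are compact (a Northcott/properness feature of the analytification), and that genericity pins down the support of the limit — all of which is the content of the earlier sections, after which the remaining steps are formal.

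\emph{Convergence of $(s_i)_i$ in $S_{\GVF}$.} Each $\Qbar$-point $s_i$ is a point of $S_{\GVF}$. Fix an ample adelic line bundle $\ov A$ on $S$. Since $h_{\ov A}(s_i)$ converges it is bounded, so all $s_i$ lie in a compact set of the form $\{s\in S_{\GVF}: h_{\ov A}(s)\le C\}$; any two limit points of the sequence pair identically with every continuous function $h_{\ov M}$ and hence coincide, so $(s_i)_i$ converges to a single $s_\infty$. Genericity forces the Zariski support of $s_\infty$ to equal $S$: were it a proper closed $Z\subsetneq S$, cofinitely many $s_i$ would be forced into a neighbourhood of $Z$, contradicting genericity. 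Consequently $\bbK$ has transcendence degree $\dim S$ over $\Qbar$. Moreover, by generic flatness the image in $S$ of the non-flat locus of $\pi$ is a proper closed subset, so by genericity only finitely many $s_i$ lie over it; discarding those and replacing $S$ by the flat locus (a finite type $\Qbar$-scheme), we may assume $\pi$ is flat of relative dimension $d$ and apply the main theorem.

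\emph{From heights of cycles to integrable intersection numbers.} Choosing a closed $S$-embedding $\sX\hookrightarrow\PP^n_S$, continuity of $s\mapsto\height(\sX_s)$ handles the degree of the generic fibre. To reach $\adeg(\ov L_0\ldots\ov L_d\mid\sX_s)$ for arbitrary integrable $\ov L_0,\dots,\ov L_d$, I would use multilinearity together with the Segre/projective-bundle trick: write each $\ov L_j$ as a difference of semipositive integrable line bundles, approximate the latter by model metrics on a common model, and expand $\adeg(\ov L_0\ldots\ov L_d\mid\sX_s)$ as a finite alternating sum of heights of cycles cut out by sections — to each term the continuity theorem applies. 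Letting $s_i\to s_\infty$ and using continuity of arithmetic intersection numbers in the metrics identifies the limit with the arithmetic intersection number $\adeg\big(\ov L_0|_{\sX_{s_\infty}}\cdots\ov L_d|_{\sX_{s_\infty}}\big)$ over the GVF $\bbK$, which is the asserted description.

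\emph{The arithmetically nef refinement.} Suppose $h_{\ov M}(s_i)\to\ov L^{\dim S}\ov M$ for an arithmetically nef $\ov L$ on $S$ and every adelic $\ov M$. Then the GVF structure attached to $s_\infty$ is precisely the one whose place measure is the top Monge--Amp\`ere measure $\mu_{\ov L}$ of $\ov L$; equivalently $\adeg(-\mid\sX_{s_\infty})$ computes $\int_{S_{\GVF}}\adeg(-\mid\sX_s)\,d\mu_{\ov L}(s)$. An arithmetic projection formula along $\pi$ (using $\dim\sX=\dim S+d$) then yields
\begin{align*}
\adeg\big(\ov L_0|_{\sX_{s_\infty}}\cdots\ov L_d|_{\sX_{s_\infty}}\big)
&= \int_{S_{\GVF}}\adeg(\ov L_0\ldots\ov L_d\mid\sX_s)\,d\mu_{\ov L}(s) \\
&= (\pi^*\ov L)^{\dim S}\,\ov L_0\ldots\ov L_d,
\end{align*}
the last equality being the standard description of $(\pi^*\ov L)^{\dim S}\cdot(-)$ on $\sX$ as integration of the fibrewise intersection numbers against $\mu_{\ov L}$. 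Here the technical points to verify are that an arithmetically nef $\ov L$ does produce the GVF structure "integration against $\mu_{\ov L}$" and that this is compatible with fibrewise arithmetic intersection — both consequences of the earlier material once the analytification is in place.
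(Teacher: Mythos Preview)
Your overall architecture matches the paper's: show that the hypothesis forces $(s_i)$ to converge in $S_{\GVF}$ to a point $s_\infty$ supported at the generic point of $S$, then invoke continuity of the fiberwise intersection number, and finally identify the limit when the GVF structure on $\kappa(s_\infty)$ comes from a polarisation. The paper states this explicitly: the first assertion is Theorem~\ref{thm:continuity_of_intersection_number} and the second is Proposition~\ref{proposition_calculating_generic_intersection}.

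Where you diverge is in the execution of the last two steps. For continuity, the paper does not reduce to ``heights of cycles cut out by sections'' via a Segre trick; instead it builds a class $\intPicQ(\cX/S)$ of \emph{globally integrable} line bundles, proves continuity for these (via resultants for simple line bundles, then uniform approximation), and then shows in Theorem~\ref{theorem:Zhang_divisors_are_induced_by_global_divisors} that every integrable Zhang line bundle on a projective $\sX$ lies in $\intPicQ(\sX/S)$. Your sketch is plausible but would need care to make the ``alternating sum of heights of cycles'' uniform over $S_{\GVF}$; the paper's route packages this uniformity once and for all into the distance $d_C$ and the approximation argument.

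For the polarised case, you phrase the identification through a Monge--Amp\`ere measure $\mu_{\ov L}$ and an integral formula $\int_{S_{\GVF}}\adeg(\cdots\mid\sX_s)\,d\mu_{\ov L}(s)$. The paper never introduces such a measure; it directly invokes Chen--Moriwaki's result (Proposition~\ref{proposition_calculating_generic_intersection}, i.e.\ \cite[Proposition~4.5.1]{Adelic_curves_2}), which gives the algebraic identity $\adeg(\ov L_0,\ldots,\ov L_d\mid X)=\ov{\cL}_0\cdots\ov{\cL}_d\cdot\pi^*\ov H_1\cdots\pi^*\ov H_n$ for a polarised GVF structure. Your measure-theoretic formulation is morally equivalent and is a reasonable heuristic, but the two ``technical points to verify'' you flag are precisely the content of that proposition, so you are re-deriving rather than citing it; the paper's route is shorter and avoids having to construct $\mu_{\ov L}$ and prove the fiberwise integration formula from scratch.
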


The first part corresponds to Theorem~\ref{thm:continuity_of_intersection_number} (which we state in detail below) and the latter part to Proposition~\ref{proposition_calculating_generic_intersection}, essentially due to Chen and Moriwaki \cite[Proposition 4.5.1]{Adelic_curves_2}. Equidistribution theorems in Arakelov geometry provide us with various examples of sequences to which the above theorem can be applied. We note in particular Yuan's equidistribution in the form of \cite[Lemma 8.2]{Chambert_Loir_survey} gives such examples.

Before stating the `continuity of heights' Theorem \ref{thm:continuity_of_intersection_number} let us recall its algebro-geometric counterpart which is the following constructibility/constantness result.

\begin{theorem}[Corollary III 9.10 \cite{Hartshorne_AG}]
    Let $\sX \to S$ be a flat projective family of varieties over a field $K$ of relative dimension $d$. Let $\sL$ be a line bundle on $\sX$. Then, the degree of the fibres $\deg_{\sL}(\sX_s)$ is locally constant.
\end{theorem}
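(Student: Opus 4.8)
The plan is to reduce the assertion to two classical facts: the local constancy of Euler characteristics in flat projective families, and the polynomiality of $n\mapsto\chi(X,L^{\otimes n})$ for a line bundle $L$ on a projective variety $X$ (the Hilbert polynomial when $L$ is ample, Snapper's lemma in general).

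Since being ``locally constant'' is a property local on $S$, I would fix $s_0\in S$ and aim to produce a connected open neighbourhood $U\ni s_0$ on which $\deg_{\sL}(\sX_s)$ is constant. For $n\in\ZZ$ and $s\in S$ put $P_s(n):=\chi(\sX_s,\sL^{\otimes n}|_{\sX_s})$. The first ingredient is that, for each \emph{fixed} $n$, the map $s\mapsto P_s(n)$ is locally constant on $S$: indeed $\pi$ is flat and $\sL^{\otimes n}$ is locally free, hence $S$-flat, so the Euler characteristic of the fibres of this sheaf is locally constant by cohomology and base change \cite[III.9.9]{Hartshorne_AG}. The second ingredient is that, for each $s$, the function $n\mapsto P_s(n)$ agrees with a polynomial in $n$ of degree at most $d=\dim\sX_s$ whose coefficient of $n^{d}$ equals $\deg_{\sL}(\sX_s)/d!$; when $\sL$ is relatively very ample this is the usual description of the degree of the embedded fibre via its Hilbert polynomial, and for arbitrary $\sL$ one simply takes this (equivalently, Snapper's lemma together with asymptotic Riemann--Roch) as the definition of the self-intersection number $\deg_{\sL}(\sX_s)=(\sL|_{\sX_s})^{d}$.

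Putting these together: a polynomial of degree $\le d$ is determined by its values at the $d+1$ integers $0,1,\dots,d$ via Lagrange interpolation. I would then choose a connected open $U\ni s_0$ on which each of the finitely many locally constant functions $s\mapsto P_s(0),\dots,s\mapsto P_s(d)$ is constant. On such a $U$ the full interpolating polynomial $P_s$ is the same for all $s\in U$, hence so is its leading coefficient, and therefore $\deg_{\sL}(\sX_s)=d!\cdot[\text{coeff.\ of }n^d\text{ in }P_s]$ is constant on $U$, which gives the claim.

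The only point that really needs care is the uniform degree bound $\deg P_s\le d$, since this is what makes $d+1$ sample values sufficient for the interpolation; this is exactly the content of the hypothesis that the family has relative dimension $d$ (each fibre being a $d$-dimensional variety), and it holds automatically over a connected base for a flat morphism whose generic fibre has dimension $d$. Everything else is routine bookkeeping with standard base-change and intersection-theory black boxes. It is worth stressing that this discrete statement is the naive template for the arithmetic refinement below: over a globally valued field the Zariski-local constancy here is upgraded to genuine continuity of the real-valued fibrewise intersection number on the GVF analytification, cf.\ Theorem~\ref{thm:continuity_of_intersection_number}.
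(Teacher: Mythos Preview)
The paper does not give its own proof of this statement: it is quoted verbatim as \cite[Corollary III 9.10]{Hartshorne_AG} in the introduction, purely as classical motivation for the arithmetic continuity theorem that follows. So there is no ``paper's own proof'' to compare against.

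That said, your argument is exactly the standard one leading to Hartshorne's corollary: reduce to local constancy of $s\mapsto\chi(\sX_s,\sL^{\otimes n}|_{\sX_s})$ for each fixed $n$ via \cite[III.9.9]{Hartshorne_AG}, then extract the degree as $d!$ times the top coefficient of the (numerical) Hilbert polynomial, which is determined by its values at $0,1,\dots,d$. This is correct and is precisely how the result is obtained in the reference. Your closing remark on the relation to Theorem~\ref{thm:continuity_of_intersection_number} is also in line with how the paper uses the statement.
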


Note that this is equivalent to the map $s\mapsto \deg_{\sL}(\sX_s)$ being a continuous function on $S$ with the Zariski topology. The same result is implied for the intersection of $d$ line bundles. In our result the Zariski topology is replaced by a GVF analytic topology, and the degree is replaced by height. Let us be more precise.

We associate to a finite type scheme $S$ over a GVF $K$, a locally compact Hausdorff space $S_{\GVF}$, called its GVF analytification. This is done similarly to the Berkovich analytification, however with valuations replaced by global heights (see Definition~\ref{definition:GVF} and Section \ref{sec:gvf_analytification}). For $K = \Qbar$, a generic sequence $s_i$ in $S(\Qbar)\subset S_{\GVF}$ converges to a point in $S_{\GVF}$ if and only if the value $h_{\ov{M}}(s_i)$ converges for every adelically metrized line bundle $\ov{M}$ on $S$. In Definition~\ref{definition:global_line_bundles}, for a flat family $\cX \to S$ over a GVF $K$, we introduce the group of global line bundles on $\cX$ over $S$. Following \cite{Zhang_thesis_inequality}, we define semipositivity and integrability in this context. For example, the group of globally integrable line bundles on $\cX$ over $S$ is denoted by $\intPicQ(\cX/S)$. In Proposition~\ref{proposition:intersection_product_over_a_GVF} we define an intersection paring on tuples of globally integrable line bundles and we prove that is satisfies the following.
\begin{theorem}[Theorem~\ref{thm:continuity_of_intersection_number}]~\label{thm:intro_continuity_of_intersection_number}
    Let $\cX \to S$  be a flat projective morphism of finite type schemes over a GVF $K$ of relative dimension $d$. Let $\ov{\sL}_0,\dots,\ov{\sL}_d\in \intPicQ(\cX/S)$. Then, the map
    \begin{align*}
        S_{\GVF} &\to \bbR\\
        s&\mapsto\adeg(\ov{\sL}_0,\dots,\ov{\sL}_d|\sX_s)
    \end{align*}
    is continuous.
\end{theorem}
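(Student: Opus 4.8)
The plan is to reduce the continuity statement to a robustness property of arithmetic intersection numbers under approximation by model metrics, combined with the density of points with "approximating" heights. First I would establish the case where $S$ is a point, which is the statement that $\adeg(\ov{\sL}_0,\dots,\ov{\sL}_d)$ for integrable adelic line bundles over a GVF is well-defined and continuous in the metrics; this is essentially Zhang's theory of semipositive/integrable metrics as recalled in Section~\ref{sec:convex_geometry}, and over a GVF it follows from the construction of the intersection pairing in Proposition~\ref{proposition:intersection_product_over_a_GVF} together with the fact that integrable metrics are uniform limits of differences of semipositive model metrics, for which the intersection number is a finite sum of local terms each of which is manifestly continuous. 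The key inequality is that $|\adeg(\ov{\sL}_0,\dots,\ov{\sL}_d)|$ is bounded by a product of the $\sup$-norms of the metric differences times mixed degrees, so uniform convergence of metrics forces convergence of intersection numbers.

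Next I would globalize. Given a point $s \in S_{\GVF}$, one obtains a GVF extension $K_s$ of $K$ together with a scheme-theoretic point, and the fiber $\sX_s$ is a projective variety over $K_s$ of dimension $d$ on which each $\ov{\sL}_i$ restricts to an integrable line bundle; the quantity $\adeg(\ov{\sL}_0,\dots,\ov{\sL}_d|\sX_s)$ is the point-case intersection number over $K_s$. To prove continuity at $s$, I would exploit the definition of the GVF analytic topology: a net $s_\alpha \to s$ means precisely that the heights $h_{\ov{M}}(s_\alpha) \to h_{\ov{M}}(s)$ for every adelic (model and limit) line bundle $\ov{M}$ on $S$, equivalently that the GVF structures on the residue fields converge in the appropriate sense. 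The strategy is then to fix a relatively very ample model line bundle on a projective model $\cX \to S$ over $K$ and write each $\ov{\sL}_i$ as a limit of model line bundles $\ov{\sL}_i^{(N)}$; for model line bundles the intersection number on fibers is, by the projection-formula/flatness argument as in the algebro-geometric constructibility theorem, expressible as a finite combination of heights $h_{\ov{M}}(s)$ for suitable adelic line bundles $\ov{M}$ on $S$ obtained by pushing forward intersection products along $\pi$ (a Deligne-pairing type construction), hence continuous in $s$ by definition of $S_{\GVF}$. Passing to the limit in $N$, uniformly in $s$ over a compact neighbourhood using the sup-norm bound from the first paragraph, yields continuity of $s \mapsto \adeg(\ov{\sL}_0,\dots,\ov{\sL}_d|\sX_s)$.

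Concretely the key steps are: (1) recall the sup-norm stability estimate for arithmetic intersection numbers over a GVF and deduce that the pairing extends continuously from model to integrable line bundles; (2) for model line bundles $\ov{\sM}_0,\dots,\ov{\sM}_d$ on $\cX$, construct the push-forward adelic line bundle $\pi_*(\ov{\sM}_0\cdots\ov{\sM}_d)$ on $S$ (Deligne pairing along the fibers) and verify the fiberwise identity $\adeg(\ov{\sM}_0,\dots,\ov{\sM}_d|\sX_s) = h_{\pi_*(\ov{\sM}_0\cdots\ov{\sM}_d)}(s)$, which reduces the model case to the very definition of convergence in $S_{\GVF}$; (3) show the approximation of a globally integrable line bundle by model ones can be done with sup-norm control that is uniform in $s$ over compacta, so that the limit of continuous functions is continuous. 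The main obstacle I expect is step (2): making sense of the Deligne pairing / fiberwise push-forward in the GVF setting and checking the compatibility $\adeg(\cdots|\sX_s) = h_{\pi_*(\cdots)}(s)$ for all GVF-points $s$, including the subtle boundary cases (non-reduced or non-integral fibers, and the trivial-valuation place flagged in the footnote). Once that compatibility is in hand, the rest is the soft combination of uniform convergence and the universal property of the GVF analytification topology.
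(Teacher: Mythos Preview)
Your overall architecture---reduce to ``simple'' model metrics, prove continuity there, then pass to the limit using a uniform sup-norm stability estimate---matches the paper exactly. Steps (1) and (3) are what the paper does in Proposition~\ref{proposition:intersection_product_over_a_GVF} and Theorem~\ref{thm:continuity_approximation}.

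The divergence is entirely in step (2), and there your proposal has a real gap. You plan to construct a Deligne pairing $\pi_*(\ov{\sM}_0\cdots\ov{\sM}_d)$ as an adelic line bundle on $S$ and then invoke continuity of $s \mapsto h_{\pi_*(\cdots)}(s)$. Two problems. First, the Deligne pairing over a GVF base is not available off the shelf; you would have to build it (including the metrics at every place, measurability, and domination), and you yourself flag this as the main obstacle. Second, your description of the topology on $S_{\GVF}$ is not quite the definition: what is continuous \emph{by definition} is $s \mapsto h_s(f_1,\ldots,f_n)$ for tuples of regular functions on an open of $S$, i.e.\ heights with respect to Weil-metrized $\ov{\cO(1)}$ under an affine map to $\PP^n$. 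Continuity of $h_{\ov{M}}$ for a general global line bundle $\ov{M}$ on $S$ is itself the $d=0$ case of the theorem, so it can be used, but you then still need the Deligne pairing to land in $\intPicQ(S)$, which brings you back to the first problem.

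The paper sidesteps the Deligne pairing entirely. For \emph{simple} line bundles $\ov{\cL}_i = j_i^*\ov{\cO(1)}$ (pullbacks of the Weil metric via closed embeddings $j_i:\cX \hookrightarrow \PP^{r_i}_S$), Proposition~\ref{proposition_formula_for_adelic_intersection_as_heights_of_resultants} expresses the fiberwise intersection number as $\lim_n n^{-(d+1)} h(R_n(s))$, where $R_n$ is an explicit resultant polynomial. Lemma~\ref{lemma:uniform_resultants} shows, using flatness, that $R_n$ can be taken with coefficients in $\cO_S(S)$, so $s \mapsto h(R_n(s))$ is continuous on $S_{\GVF}$ \emph{directly from the definition of the topology}. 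The error in the limit is controlled by an absolute constant times $(1+\nu(\Omega_\infty))\max_i r_i\delta_i \cdot \tfrac{\log n}{n}$ (Remark~\ref{remark_uniformity_in_height_of_resultants_formula}, with the archimedean estimate supplied by Appendix~\ref{appendix:A}); since $r_i,\delta_i$ are constant along the flat family and $\nu(\Omega_\infty)=\height(2)/\log 2$ is locally bounded on $S_{\GVF}$, the convergence is uniform and continuity follows. In effect the family of resultants $R_n$ plays the role of your Deligne pairing, but it is a concrete tuple of regular functions on $S$ rather than an abstract metrized line bundle, and that is precisely what makes the argument go through without further machinery.
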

Equivalently, this means that the intersection product $\adeg(\ov{\sL}_0,\dots,\ov{\sL}_d|\sX_s)$ (for $s \in S(F)$ with $K \subset F$ being a GVF extension) can be defined by a quantifier-free formula in the GVF language, with parameters from the base-field $K$. We believe that this fact may be important in axiomatizing the model companion of globally valued fields (if it exists).

The structure of this text is the following. In Section~\ref{sec:preliminaries} we introduce globally valued fields, GVF analytifications, and give some examples. We also present necessary notions from Arakelov geometry of toric varieties. In Section~\ref{sec:int_product} we use the theory of adelic curves to define the intersection product over arbitrary GVF and prove Theorem~\ref{thm:continuity_of_intersection_number}. We also prove Theorem~\ref{theorem:Zhang_divisors_are_induced_by_global_divisors} which relates global integrable line bundles (in our new sense) to integrable adelic line bundles over a number field (in the sense of Zhang). In Section~\ref{sec:Average_intersections} we use the previous results and perform calculations which allow us to conclude with Theorem~\ref{thm:gualdi_conjecture}. In Appendix~\ref{appendix:A} we prove an estimate on a variant of the Mahler measure of a polynomial, needed in the proof of Theorem~\ref{thm:continuity_of_intersection_number}. We believe this is known to experts, however we could not find a suitable reference. The appendix is self-contained and elementary.
\subsection{Acknowledgments}
We would like to thank our advisors Itaï Ben Yaacov, Ehud Hrushovski and Fabien Pazuki for various discussions and comments regarding this work. We further thank Klaus Künnemann for useful remarks on the exposition. Special thanks go to Roberto Gualdi and Martin Sombra, for sharing with us their draft \cite{gualdi_sombra_codimension2} and for their helpful remarks. Moreover, we would like to thank the organizers and the participants of the 2024 Students’ Conference on Non-Archimedean, Tropical and Arakelov Geometry, for their interest in this project. We are grateful to R\'emi Reboulet for pointing out the connection to Deligne pairings. The third author would like to thank Douglas Molin for discussions about limit heights. The second author would like to thank Qingyuan Bai for helpful suggestions on mixed volumes.

\section{Preliminaries}~\label{sec:preliminaries}
This section will serve both to recall definitions and theorems, as well as to state and prove technical results related to the definitions just recalled.
\subsection{Globally valued fields}

Globally valued fields are a theory in (unbounded) continuous logic designed to model fields with multiple valuations and a product formula. They are closely related to adelic curves.

There are multiple ways to describe globally valued fields, see \cite{basics_of_gvfs} for an introduction. The simplest definition is the following.
\begin{definition}~\label{definition:GVF}
    A \emph{globally valued field} (abbreviated GVF) is a field $F$ together with a height function $h:\mathbb{A}(F)\rightarrow \mathbb{R}\cup\{-\infty\}$, where $\mathbb{A}(F)$ denotes the disjoint union of $\mathbb{A}^n(F)$ for all $n\in\mathbb{N}$, satisfying the following axioms, for some \emph{Archimedean error} $e \geq 0$.
\[
    \begin{array}{lll}
        \textnormal{Height of zero:} & \forall x\in F^n, & h(x) = -\infty \Leftrightarrow x=0 \\
        \textnormal{Height of one:} & & h(1,1) = 0 \\
        \textnormal{Invariance:} & \forall x\in F^n,\, \forall \sigma\in \Sym_n, & h(\sigma x) = h(x) \\
        \textnormal{Additivity:} & \forall x\in F^n,\, \forall y\in F^m, & h(x\otimes y) = h(x) + h(y) \\
        \textnormal{Monotonicity:} & \forall x\in F^n,\, \forall y\in F^m, & h(x) \leq h(x,y) \\
        \textnormal{Triangle inequality:} & \forall x,y\in F^n, & h(x+y) \leq h(x,y) + e \\
        \textnormal{Product formula:} & \forall x\in F^{\times}, & h(x) = 0
    \end{array}
\]
    Here $\otimes$ denotes the Segre product, i.e., $(x_1, \dots, x_n) \otimes (y_1, \dots, y_m) = (x_i \cdot y_j : 1 \leq i \leq n, 1 \leq j \leq m)$. Note that such height factors through $h:\mathbb{P}^n(F)\rightarrow \mathbb{R}_{\geq 0}$ for each $n$. We write $\height(x) := h[x:1]$ for $x \in \Field$.
\end{definition}

If $\Field$ is countable, these can also be seen as equivalence classes of proper adelic curve structures on $\Field$ (originally defined in \cite{Adelic_curves_1}).

\begin{definition}
    A \textit{proper adelic curve} is a field $\Field$ together with a measure space $(\Omega, \mathcal{A}, \nu)$ and with a map $(\omega \mapsto |\cdot|_{\omega}):\Omega \to M_{\Field}$ to the space of absolute values on $\Field$, such that for all $a \in \Field^{\times}$ the function 
    \[ \omega \mapsto \omega(a) := -\log|a|_{\omega} \]
    is in $L^1(\nu)$ with integral zero.
\end{definition}

If $\Field$ is equipped with a proper adelic curve structure, then one can define a GVF structure on it, by putting
\[ h(x_1, \dots, x_n) := \int_{\Omega} -\min_i(\omega(x_i)) d\nu(\omega). \]
On the other hand, if $\Field$ is countable, any GVF structure on $\Field$ is represented by some proper adelic curve structure in this way (\cite[Corollary 1.3]{basics_of_gvfs}). 

There is yet another equivalent way to describe a GVF structure on a field which turns out convenient for our purposes. We give here this definition in the case where $F$ is countable. The general definition can be found in \cite{basics_of_gvfs}.

\begin{definition}
    A \emph{lattice group} is a partially ordered abelian group $(G,+,\le)$ such that every pair of elements $x,y\in G$ has a greatest lower bound, denoted $x\meet y$. Since the order may be recovered from the binary operator $\meet$, we may also call the triple $(G,+,\meet)$ a lattice group.

    We equip the space $(M_F)^{\mathbb{R}}$ with the structure of a lattice group such that $f : M_F \rightarrow \mathbb{R}$ is positive if and only if $f(|\cdot|)\geq 0$ for each absolute value $|\cdot|$. We call the space of \emph{lattice divisors} over $F$, denoted $\LdivQ(F)$, the divisible lattice subgroup of $(M_F)^{\mathbb{R}}$ generated by elements of the form $\adiv(x) : |\cdot| \mapsto -\log|x|$ for $x\in F^\times$. These generators are called \emph{principal} lattice divisors.
\end{definition}

Then, GVF structures on $\Field$ correspond to so called \textit{GVF functionals}, which are linear functionals
\[ l:\LdivQ(\Field) \to \RR \]
that are non-negative on the positive cone, and are zero on principal lattice divisors.

Assume that $\Field$ is a finitely generated extension of $\QQ$. In \cite{szachniewicz2023} and \cite{basics_of_gvfs} an Arakelov theoretic interpretation of the lattice $\LdivQ(\Field)$ was given. More precisely, $\LdivQ(\Field)$ embeds into
\[ \AdivQ(\Field) = \varinjlim \AdivQ(\cX), \]
where $\AdivQ(\cX)$ is the group of arithmetic $\QQ$-divisors of $C^0$-type on $\cX$, and the union is taken over the system of all arithmetic varieties $\cX$ (i.e., normal, integral, flat and projective over $\Spec(\ZZ)$) with an isomorphism $\kappa(\cX) \simeq \Field$. In \cite{szachniewicz2023} a group lattice structure on $\AdivQ(\Field)$ was defined, so that the embedding is a morphism of group lattices. 

Let $X = (X,\pi_X)$ be a projective scheme over $\QQ$ with a fixed isomorphism $\pi_X : \kappa(X) \rightarrow \Field$. To avoid confusion, we use the name \textit{Zhang divisors/Zhang line bundles} for Cartier divisors on $X$ equipped with adelic Green functions/line bundles with adelic metrics, in the sense of \cite[Remark (4.8)]{Chambert_Loir_survey} (originally defined by Zhang \cite{zhang1995small}). We denote the group of Zhang divisors on $X$ by $\ZDiv(X)$ and by $\ZDivQ(X)$ its tensor with $\QQ$. Using the same methods as in \cite{szachniewicz2023}, one also defines a lattice structure on
\[ \ZDivQ(\Field) = \varinjlim \ZDivQ(X), \]
where the direct limit is taken over the system of all $X$ as above (with maps respecting the isomorphisms $\pi_X$). Then the group lattice $\LdivQ(\Field)$ has a natural embedding into $\ZDivQ(\Field)$. Moreover, every GVF functional extends uniquely to a positive functional on $\ZDivQ(\Field)$ so we get the following.
\begin{corollary}
    On a finitely generated field $\Field$ over $\QQ$ there is a natural bijection
    \[ \bigl\{ \textnormal{GVF functionals $\ZDiv_{\QQ}(\Field) \to \RR$}  \bigr\} \longleftrightarrow \bigl\{ \textnormal{GVF structures on $\Field$}  \bigr\}, \]
    where by GVF functionals we mean the linear ones that are non-negative on the effective cone, and are zero on principal Zhang divisors.
\end{corollary}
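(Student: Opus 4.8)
\emph{Reduction.} The plan is to leverage the correspondence recalled above between GVF structures on $\Field$ and GVF functionals on $\LdivQ(\Field)$. As $\LdivQ(\Field)$ embeds as a lattice subgroup of $\ZDivQ(\Field)$ via a morphism $\iota$ which sends each principal lattice divisor $\adiv(x)$ to the principal Zhang divisor attached to $x$, it suffices to prove that precomposition with $\iota$ gives a bijection between the GVF functionals on $\ZDivQ(\Field)$ and the GVF functionals on $\LdivQ(\Field)$.

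\emph{Well-definedness and uniqueness.} If $l'$ is linear, non-negative on the effective cone, and zero on principal Zhang divisors, then $l'\circ\iota$ is linear, non-negative on the positive cone of $\LdivQ(\Field)$ (the lattice morphism $\iota$ carries positive elements to effective ones), and zero on principal lattice divisors; thus $l'\circ\iota$ is again a GVF functional. For injectivity I would establish that $\LdivQ(\Field)$ is order-dense in $\ZDivQ(\Field)$ relative to any GVF functional: if $l'_1,l'_2$ both restrict to $l$ along $\iota$, then for every effective $\widehat D\in\ZDivQ(\Field)$ and every $\varepsilon>0$ there are $\widehat E_-,\widehat E_+\in\LdivQ(\Field)$ with $\widehat E_-\le\widehat D\le\widehat E_+$ and $l(\widehat E_+-\widehat E_-)<\varepsilon$; squeezing gives $|l'_1(\widehat D)-l'_2(\widehat D)|\le\varepsilon$, hence $l'_1=l'_2$ on the effective cone and, by linearity, everywhere. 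Proving this order-density statement is the technical core: one represents $\widehat D$ on a projective model $X$ of $\Field$, uses projectivity to dominate it by a $\QQ$-multiple of a very ample Zhang divisor after enlarging $X$, and compares very ample sections with rational functions up to a bounded vertical correction — an estimate of Mahler-measure type; the whole claim ultimately reduces to the transparent case $\trdeg_\QQ\Field=0$ of number fields.

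\emph{Existence.} Conversely, given a GVF functional $l$ on $\LdivQ(\Field)$, I would build an extension to $\ZDivQ(\Field)$ explicitly. As $\Field$ is countable, $l$ is represented by a proper adelic curve structure $(\Omega,\cA,\nu,\ \omega\mapsto|\cdot|_\omega)$ on $\Field$. For a Zhang divisor $\widehat D$ on a projective model $X$ of $\Field$, pull $\widehat D$ back along the generic point $\Spec\Field\to X$, view the result as an adelic divisor on the adelic curve $\Field$, and set $l'(\widehat D)$ to be its arithmetic degree (equivalently, integrate the local Green-function contributions of $\widehat D$ against $\nu$). Using the same bookkeeping of lattice and colimit structures as in \cite{szachniewicz2023}, one checks that $l'$ is well defined on $\ZDivQ(\Field)=\varinjlim\ZDivQ(X)$, linear, non-negative on effective Zhang divisors, zero on principal Zhang divisors (this is the product formula for the adelic curve), and equal to $l$ on $\LdivQ(\Field)$. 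Alternatively one may obtain an extension from a Hahn--Banach/Riesz-type theorem for positive functionals on a majorizing lattice subgroup; the uniqueness above then shows that this extension is canonical and automatically kills all principal Zhang divisors. Combined with the previous paragraph, this shows that precomposition with $\iota$ is the asserted bijection.

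\emph{Expected main obstacle.} The crux is the order-density (squeeze) lemma — which is what controls the ``horizontal'' directions of $\ZDivQ(\Field)$ that are invisible to $\LdivQ(\Field)$ — together with the verification that the explicit construction of $l'$ does not depend on the chosen model; both are of the type already handled in \cite{szachniewicz2023} and \cite{basics_of_gvfs}, with the number-field case serving as the base of the reduction.
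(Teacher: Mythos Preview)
Your proposal is correct and matches the paper's treatment: the corollary is stated there without proof, as an immediate consequence of the sentence preceding it (``every GVF functional extends uniquely to a positive functional on $\ZDivQ(\Field)$''), which in turn is attributed to the methods of \cite{szachniewicz2023} and \cite{basics_of_gvfs}. Your reduction to unique extension along $\iota:\LdivQ(\Field)\hookrightarrow\ZDivQ(\Field)$, with existence via the adelic-curve representation and uniqueness via order-density, is exactly the intended argument and in fact supplies more detail than the paper itself.
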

For $\ov{D} \in \ZDivQ(X)$ and $x \in X(\GVFQ)$ we write $h_{\ov{D}}(x)$ for the height of $x$ with respect to $\ov{D}$. A GVF functional $l$ determines a GVF structure on a field by the formula
\[ h(x_1, \dots, x_n) := l\bigl(-\bigwedge_{i=1}^n \adiv(x_i)\bigr). \]
Globally valued fields form a category where maps are embeddings of fields respecting height functions. Equivalently, an extension of fields $K \subset \Field$ induces an embedding $\LdivQ(K) \subset \LdivQ(\Field)$ and GVF structures on $\Field$ extending a given GVF structure on $K$ are precisely extensions of GVF functionals. Let us point out the following fact.

\begin{lemma}~\cite[Lemma 10.2]{basics_of_gvfs}~\label{lemma:uniqueness_of_GVF_on_Q}
    For any $e \geq 0$ there is a unique GVF structure on $\QQ$ (and on any subfield of $\GVFQ$) satisfying $\height(2) = e \cdot \log 2$.
\end{lemma}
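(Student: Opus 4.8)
The plan is to use the third description of GVF structures recalled above: for a countable field $\Field$, a GVF structure is the same datum as a $\QQ$-linear functional $l\colon\LdivQ(\Field)\to\RR$ that is non-negative on the positive cone and vanishes on all principal lattice divisors $\adiv(x)$, $x\in\Field^\times$, via $h(x_1,\dots,x_n)=l\bigl(-\bigmeet_i\adiv(x_i)\bigr)$. So everything reduces to understanding $\LdivQ$ together with its subgroup of principal divisors, first over $\QQ$ and then over subfields of $\GVFQ$.

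For $\Field=\QQ$ I would first compute $\LdivQ(\QQ)$ by hand. Every absolute value on $\QQ$ is, up to a positive power, the trivial one, some $|\cdot|_p$, or $|\cdot|_\infty$; on the ray $\{|\cdot|_v^s:s>0\}$ above a place $v$ the generator $\adiv(x)$ is the linear function $s\mapsto -s\log|x|_v$, and the lattice operations act slopewise and independently ray by ray. It follows that $\LdivQ(\QQ)$ consists exactly of the functions on the space of absolute values that vanish at the trivial one, are linear through the origin on each ray, are supported on finitely many rays, have slope above a prime $p$ in $(\log p)\QQ$, and slope above $\infty$ in $V:=\operatorname{span}_{\QQ}\{\log p:p\text{ prime}\}$. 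Consequently a non-negative functional $l$ is determined by the numbers $l(\adiv(p)\join 0)$, one per prime $p$ (here $\adiv(p)\join 0$ is the part of $\adiv(p)$ above $p$), together with a $\QQ$-linear functional $\phi\colon V\to\RR$ recording $l$ on the part of $\LdivQ(\QQ)$ supported above $\infty$, naturally identified with $V$ via the slope. Non-negativity of $l$ forces $\phi\ge 0$ on $V\cap\RR_{\ge0}$; since $V$ is a dense $\QQ$-subspace of $\RR$, an elementary squeezing argument shows $\phi$ is multiplication by some constant $e\ge 0$. Vanishing of $l$ on $\adiv(p)$, which decomposes as its part above $p$, namely $\adiv(p)\join 0$, plus its part above $\infty$, namely $\adiv(p)\meet 0$, then forces $l(\adiv(p)\join 0)=e\log p$ as well. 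Hence $l$ is pinned down by the single parameter $e$, and unwinding $h(x_1,\dots,x_n)=l(-\bigmeet_i\adiv(x_i))$ shows this $l$ is exactly $e$ times the standard logarithmic Weil height, so $\height(2)=e\log 2$ and $e$ is recovered. Conversely, for any $e\ge0$ this recipe does define a non-negative functional killing principal divisors, giving existence. This disposes of $\QQ$.

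For a subfield $\Field\subseteq\GVFQ$: its GVF structure restricts to one on $\QQ$ with the same value of $\height(2)$, hence to the unique structure with parameter $e$. Since a height function on $\mathbb{A}(\Field)$ is determined by its restrictions to the finite subextensions $K/\QQ$, it suffices to prove uniqueness for $K$ a number field, and there I would rerun the analysis of $\LdivQ(K)$: it is again assembled from rays above the places $w$ of $K$, and a non-negative functional $l$ vanishing on principal divisors is determined by the values on those rays. For the finite places one invokes the finiteness of the ideal class group: some power $\mathfrak{p}_w^m$ is principal, say $(\beta)$, and $\adiv(\beta)$ is supported on the ray above $w$ and on the archimedean rays, so $l(\adiv(\beta))=0$ expresses the $w$-value in terms of the archimedean data. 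For the archimedean places one invokes Dirichlet's unit theorem: $\adiv(u)$ for $u\in\mathcal{O}_K^\times$ is supported on the archimedean rays, and the image of $\mathcal{O}_K^\times$ spans the trace-zero hyperplane; together with the single extra relation inherited from $\LdivQ(\QQ)\hookrightarrow\LdivQ(K)$ and the squeezing lemma applied once more, this forces all archimedean values. Uniqueness follows; existence is again witnessed by the suitably normalized Weil height on $K$, which satisfies the GVF axioms by a routine check.

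The hard part is precisely this archimedean analysis for number fields of higher degree. Over $\QQ$ there are no units and a trivial class group, so the only subtlety is the squeezing lemma for non-negative $\QQ$-linear functionals on a dense $\QQ$-subspace of $\RR$, and the argument is elementary. In the general statement one genuinely needs Dirichlet's unit theorem and the finiteness of the class group, and one must still upgrade ``$l$ is determined on a full sublattice of divisors'' to ``$l$ is determined as a real functional'' using divisibility and the same density argument. I would therefore write the $\QQ$ case self-containedly and let the number-field case cite these two classical finiteness theorems.
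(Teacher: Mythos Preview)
The paper does not prove this lemma; it simply cites \cite[Lemma 10.2]{basics_of_gvfs}. Your proposal therefore supplies a self-contained argument where the paper has none, and it is correct.

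The treatment of $\QQ$ is complete and clean. For a number field $K$ your sketch is right but the phrase ``the squeezing lemma applied once more'' hides the one step that needs unpacking, namely the multi-dimensional version. Concretely: on the archimedean slope-tuple space the $\QQ$-span of the unit log-vectors (which span the trace-zero hyperplane over $\RR$ by Dirichlet) together with the diagonals $(a,\dots,a)$ for $a\in V$ is dense in $\RR^r$, and on this span the functional is already forced to be $l(a_1,\dots,a_r)=\tfrac{e}{[K:\QQ]}\sum_\sigma [K_\sigma:\RR]\,a_\sigma$. Any archimedean slope-tuple arising from $\LdivQ(K)$ can then be sandwiched coordinatewise between nearby elements of this dense $\QQ$-subspace, and positivity of $l$ on the cone squeezes $l$ to the same formula everywhere. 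With this made explicit your argument goes through; the reduction of finite places to archimedean data via the class number is unproblematic. This is essentially the Artin--Whaples uniqueness of product-formula weights rephrased in the $\LdivQ$ language, and is almost certainly what the cited reference does as well.
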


We refer to the GVF structure with $e=1$ as the standard one.\\

\subsection{GVF analytification}~\label{sec:gvf_analytification}
Now, we describe a construction which recovers the space of quantifier-free types in the theory of globally valued fields, as in \cite{basics_of_gvfs}.

\begin{definition}
    Let $K$ be a GVF and let $X$ be a finite type scheme over $K$. We define the \emph{GVF analytification} of $X$ over $K$, denoted $\GVFan[K]{X}$ (or $\GVFan{X}$ if the base GVF is implied), to be the set of couples $x = (\pi(x),h_x)$ where $\pi(x)$ is a point of $X$, and $h_x$ is a height on $\kappa(\pi(x))$ extending the height on $K$. If $x\in \GVFan[K]{X}$ and $U\subset X$ is an open containing $\pi(x)$, we will also denote by $h_x$ the map
\[
    h_x : \begin{array}{ccc}
            \mathbb{A}(\mathcal{O}_X(U)) & \rightarrow & \mathbb{R}\cup\{-\infty\} \\
            (f_1,\ldots,f_n) & \mapsto & h_x(f_1(\pi(x)),\ldots,f_n(\pi(x)))
          \end{array}.
\]

We equip $\GVFan[K]{X}$ with the weakest topology such that
\begin{enumerate}
    \item The map $\pi : \GVFan[K]{X} \rightarrow X$ is continuous onto $X$ with the constructible topology.
    \item For every Zariski open $U\subset X$, and every tuple $(f_1,\ldots,f_n)\in \mathcal{O}_X(U)^n$, the map $x \mapsto h_x(f_1,\ldots,f_n)$ is continuous.
\end{enumerate}
\end{definition}

\begin{remark}~\label{remark:GVF_analytification_as_qf_GVF_types}
    If $X\subset \mathbb{A}^n_K$ is affine, it follows that $\GVFan[K]{X}$ can be naturally identified with the space $S_X^{\qf}$ of quantifier-free types on $X$ (see \cite[Construction 11.15]{basics_of_gvfs}). Since the space of quantifier-free types is locally compact, so is $\GVFan[K]{X}$.
    
    It is possible to define a Zariski topology on the GVF analytification by requiring the map to the Zariski topology to be continuous and the height to be merely lower semicontinuous. Then, our methods prove the lower semicontinuity of heights for semipositive line bundles.
\end{remark}

\begin{remark}~\label{remark:embedding_F_points_GVF_analytification}
 Let $K \subset F$ be a GVF extension. Then, there is a canonical \emph{analytification map} $X(F) \rightarrow \GVFan[K]{X}$, defined by taking $x\in X(F)$ to the point $(x,h_x)$, where $h_x$ is the restriction of the height on $F$ to $\kappa(x)$. The image of $x$ by this map will be denoted $x^{\an}$.
\end{remark}

\subsection{Polarisations}

Here we present how arithmetic intersection theory can induce GVF structures and how to interpret Yuan's equidistribution result as certain kind of uniqueness of a GVF structure.

\begin{definition}
    Let $\Field$ be a finitely generated characteristic zero field equipped with a GVF structure. We say that this GVF structure comes from a \textit{polarisation} $(X, \ov{H}_1, \dots, \ov{H}_d)$, if $X$ is a normal projective variety of dimension $d$ over $\QQ$ with function field $\Field$ and $\ov{H}_i \in \ZDivQ(X)$ are arithmetically nef Zhang divisors, such that the GVF functional
    \[ l:\ZDivQ(\Field) \to \RR \]
    is given by
    \[ l(\ov{D}) = \ov{H}_1 \cdot \ldots \cdot \ov{H}_d \cdot \ov{D}. \]
\end{definition}

The name ``polarisation'' comes from \cite[Section 3.1]{moriwaki_finitely_generated_fields}, however here we also allow not necessarily model Zhang divisors. We also use the term polarisation if instead of $\ov{H}_i$'s we are given the corresponding Zhang line bundles $\cO(\ov{H}_i)$. If the $\ov{H}_i$ occurs with multiplicity $k_i$ we denote by $(X, \ov{H}_1^{k_1},\dots,\ov{H}_r^{k_r})$ for the corresponding polarisation.

\begin{remark}
    A polarisation $(X, \ov{H}_1, \dots, \ov{H}_d)$ induces a GVF structure structure on $\Field$ that extends the standard GVF structure on $\QQ$ (i.e. satisfies $\height(2) = \log 2$) if and only if the geometric intersection number satisfies $H_1 \cdot \ldots \cdot H_d = 1$.
\end{remark}

Yuan's equidistribution \cite{Yuan_big_line_arithmetic_bundles_Siu_inequality} gives examples of polarised GVF structures. Let us recall a version of it from \cite[Lemma (8.2)]{Chambert_Loir_survey}.

\begin{theorem}~\label{theorem_Yuan_equidistribution}
    Let $X$ be a projective variety of dimension $d$ over $\QQ$. Fix a semipositive Zhang divisor $\ov{D}$ with $D$ ample and $\ov{D}^{d+1}=0$. For any generic sequence $x_n \in X(\GVFQ)$ with $h_{\ov{D}}(x_n) \to 0$, and any $\ov{M} \in \ZDivQ(X)$ we have
    \[ \lim_n h_{\ov{M}}(x_n) = \frac{\ov{D}^d \cdot \ov{M}}{\deg(D)}. \]
\end{theorem}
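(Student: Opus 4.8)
The plan is to recognise this as Yuan's arithmetic equidistribution theorem and to prove it by the variational principle of Szpiro--Ullmo--Zhang in the form due to Yuan \cite{Yuan_big_line_arithmetic_bundles_Siu_inequality}. Here heights are taken for the standard GVF structure on $\GVFQ$, i.e.\ ordinary arithmetic heights, and I would assume $\ov{M}$ integrable --- this is needed even to make sense of the intersection number $\ov{D}^d\cdot\ov{M}$ on the right-hand side, and holds in all applications. Two ingredients go into the argument. \textbf{(a)} For a generic sequence $(x_n)$ --- one that eventually avoids every proper closed subvariety of $X$ --- and any Zhang divisor $\ov{L}$, one has $\liminf_n h_{\ov{L}}(x_n)\ge e_1(\ov{L})$, where $e_1$ is the essential minimum; this is immediate from the definitions of ``generic'' and of $e_1$. \textbf{(b)} The standard estimate $e_1(\ov{L})\ge \avol(\ov{L})/\bigl((d+1)\deg(L)\bigr)$, valid whenever $L$ is ample (small sections of large multiples of $\ov{L}$ produce algebraic points of controlled height), together with Yuan's arithmetic Siu inequality $\avol(\ov{L}_1-\ov{L}_2)\ge \ov{L}_1^{d+1}-(d+1)\,\ov{L}_1^{d}\cdot\ov{L}_2$ for arithmetically nef $\ov{L}_1,\ov{L}_2$.

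First I would reduce to a single inequality. It suffices to prove
\[
    \liminf_n h_{\ov{N}}(x_n)\ \ge\ \frac{\ov{D}^{d}\cdot\ov{N}}{\deg(D)}\qquad\text{for every integrable Zhang divisor }\ov{N},
\]
call it $(\dagger)$: applying $(\dagger)$ to $\ov{M}$ and to $-\ov{M}$ gives $\liminf_n h_{\ov{M}}(x_n)\ge \ov{D}^{d}\cdot\ov{M}/\deg(D)\ge\limsup_n h_{\ov{M}}(x_n)$, so the limit exists and has the asserted value.

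To establish $(\dagger)$, fix an integrable $\ov{N}$ and write $\ov{N}=\ov{N}_1-\ov{N}_2$ with $\ov{N}_i$ semipositive and $N_i$ ample (adding a fixed arithmetically ample divisor to both if necessary). For small rational $t>0$ the divisor $D_t:=D+tN$ is ample, and writing $\ov{D}_t=(\ov{D}+t\ov{N}_1)-t\ov{N}_2$, using Yuan's Siu inequality together with the expansion $(\ov{D}+t\ov{N}_1)^{d+1}=\ov{D}^{d+1}+(d+1)t\,\ov{D}^{d}\cdot\ov{N}_1+O(t^2)$ and the hypothesis $\ov{D}^{d+1}=0$, one obtains $\avol(\ov{D}_t)\ge (d+1)\,t\,\ov{D}^{d}\cdot\ov{N}+O(t^2)$ as $t\to 0^+$. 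Feeding this into (b) and then (a),
\[
    \liminf_n h_{\ov{D}_t}(x_n)\ \ge\ e_1(\ov{D}_t)\ \ge\ \frac{\avol(\ov{D}_t)}{(d+1)\deg(D_t)}\ \ge\ \frac{(d+1)\,t\,\ov{D}^{d}\cdot\ov{N}+O(t^2)}{(d+1)\deg(D_t)}.
\]
On the other hand $h_{\ov{D}_t}(x_n)=h_{\ov{D}}(x_n)+t\,h_{\ov{N}}(x_n)$ and $h_{\ov{D}}(x_n)\to 0$ by hypothesis, so the left-hand side equals $t\cdot\liminf_n h_{\ov{N}}(x_n)$. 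Dividing by $t$ and letting $t\to 0^+$ (so that $\deg(D_t)\to\deg(D)$) yields $(\dagger)$.

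Everything above is formal except ingredient \textbf{(b)}: the lower bound for the essential minimum in terms of the arithmetic volume and, above all, Yuan's arithmetic Siu inequality --- this is the step I would cite rather than reprove, and it is the real content of the theorem. In the classical number-field situation with $\ov{D}$ nef, the weaker bound $e_1(\ov{L})\ge \ov{L}^{d+1}/\bigl((d+1)\deg(L)\bigr)$ for semipositive $\ov{L}$ (Zhang, Autissier) already handles $(\dagger)$ for semipositive $\ov{N}$; what genuinely needs Yuan is that the reduction forces us to run the perturbation with the non-semipositive divisor $-\ov{N}$, for which only the ``big, not nef'' form of the inequality is available. Since the statement is used here exactly as recorded in \cite[Lemma~(8.2)]{Chambert_Loir_survey}, one may of course also simply invoke it there.
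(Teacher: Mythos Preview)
The paper does not prove this theorem at all: it is stated as a citation, introduced with ``Let us recall a version of it from \cite[Lemma~(8.2)]{Chambert_Loir_survey}'', and then used as a black box. Your proposal therefore goes strictly further than the paper, which simply invokes the result.

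Your sketch is the standard Szpiro--Ullmo--Zhang variational argument in Yuan's form and is essentially correct; the reduction to the one-sided inequality $(\dagger)$, the perturbation $\ov{D}_t$, the use of the arithmetic Siu inequality to control $\avol(\ov{D}_t)$, and the passage through the essential minimum are exactly how this is done in \cite{Yuan_big_line_arithmetic_bundles_Siu_inequality} and \cite{Chambert_Loir_survey}. One small caveat: to apply Yuan's Siu inequality you need $\ov{D}+t\ov{N}_1$ and $t\ov{N}_2$ to be arithmetically \emph{nef}, not merely semipositive with ample underlying bundle; in practice one arranges this by first adding a small positive constant to the metric on $\ov{D}$ (which shifts all heights by the same constant and preserves the hypothesis in the limit) and by choosing the decomposition $\ov{N}=\ov{N}_1-\ov{N}_2$ with $\ov{N}_i$ arithmetically ample. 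This is routine and handled in the references you cite, but it is worth flagging since ``semipositive'' in the Zhang sense does not literally coincide with ``arithmetically nef''. Your closing remark that one may simply cite \cite[Lemma~(8.2)]{Chambert_Loir_survey} is exactly what the paper does.
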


For the next two corollaries, fix the context of the above theorem. Also, denote by $\Field$ the function field of $X$.

\begin{corollary}
    There is a unique GVF functional $l$ on $\Field$ extending the standard one on $\QQ$ and satisfying $l(\ov{D}) = 0$.
\end{corollary}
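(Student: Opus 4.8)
The plan is to obtain existence from the polarisation attached to $\ov D$ (suitably normalized), and to deduce uniqueness by combining Yuan's equidistribution (Theorem~\ref{theorem_Yuan_equidistribution}) with the fact that a GVF functional on a finitely generated field is a limit of heights of algebraic points. For existence, one takes
\[ l_0(\ov M)\;=\;\frac{\ov D^{\,d}\cdot\ov M}{\deg(D)},\qquad \ov M\in\ZDivQ(X), \]
extended to $\ZDivQ(\Field)$ in the evident way (all relevant intersection numbers, in particular $\deg(D)=D^d$, being preserved under pullback along birational maps since $\ov D$ is arithmetically nef). Then $l_0$ is non-negative on the effective cone and vanishes on principal divisors by the projection formula; the normalization by $\deg(D)$ makes it restrict to the standard GVF structure on $\QQ$; and the remaining GVF axioms follow from the lattice formalism of \cite{szachniewicz2023}. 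Finally $l_0(\ov D)=\ov D^{\,d+1}/\deg(D)=0$, so $l_0$ has the required properties.

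For uniqueness, let $l$ be any GVF functional on $\Field$ extending the standard one with $l(\ov D)=0$. Since such a functional is determined by its restriction to $\LdivQ(\Field)\subseteq\ZDivQ(X)$, it suffices to prove $l=l_0$ on $\ZDivQ(X)$. I would regard $l$ as a point of $\GVFan[\QQ]{X}$: it is the point lying over the generic point of $X$, with height function the one on $\Field$ determined by $l$. Because $\Field$ is the function field of $X$ and $l$ extends the standard GVF structure on $\QQ$, this point is a limit of (the analytifications of) a generic sequence $(x_n)_n$ in $X(\GVFQ)$; this density of algebraic points is part of the structure theory of GVF functionals on finitely generated fields (\cite{basics_of_gvfs}). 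Since $\ov M \mapsto h_{\ov M}(\,\cdot\,)$ is continuous on $\GVFan{X}$ for integrable $\ov M$, we get $h_{\ov M}(x_n)\to l(\ov M)$ for every $\ov M\in\ZDivQ(X)$; in particular $h_{\ov D}(x_n)\to l(\ov D)=0$ (note also $h_{\ov D}\ge 0$ along generic sequences, as $\ov D$ is semipositive with $D$ ample).

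Now $(x_n)$ is a generic sequence with $h_{\ov D}(x_n)\to 0$, so Theorem~\ref{theorem_Yuan_equidistribution} gives $l(\ov M)=\lim_n h_{\ov M}(x_n)=\ov D^{\,d}\cdot\ov M/\deg(D)=l_0(\ov M)$ for all $\ov M\in\ZDivQ(X)$, hence $l=l_0$. I expect the main obstacle to be the realization step — producing a generic sequence of algebraic points whose analytifications converge to the prescribed functional $l$ in $\GVFan{X}$ (and verifying that the topology of the GVF analytification indeed makes heights with respect to integrable Zhang divisors continuous) — together with the minor technical points that Theorem~\ref{theorem_Yuan_equidistribution} is phrased for sequences rather than nets. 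Once the realization is available, the appeal to Yuan's theorem is immediate, and the positivity and functoriality checks for $l_0$ are routine.
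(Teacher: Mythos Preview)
Your proposal is correct and follows the same approach as the paper: existence via the polarisation formula $l_0(\ov M)=\ov D^{\,d}\cdot\ov M/\deg(D)$, and uniqueness by realizing $l$ as a limit of heights along a generic small sequence in $X(\GVFQ)$ and then invoking Yuan's theorem. For the realization step you flagged, the paper cites existential closedness of $\GVFQ$ from \cite[Theorem~A]{szachniewicz2023} and applies it \emph{per} $\ov M$ --- producing, for each fixed $\ov M\in\ZDivQ(X)$, a generic sequence with $h_{\ov D}(x_n)\to 0$ and $h_{\ov M}(x_n)\to l(\ov M)$ --- which sidesteps any appeal to continuity of $h_{\ov M}$ on $\GVFan{X}$ for general Zhang $\ov M$ (and hence any forward reference to the results of Section~\ref{sec:int_product}).
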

\begin{proof}
    Fix a GVF functional $l$ on $\Field$ with the above properties and a Zhang divisor $\ov{M} \in \ZDivQ(\Field)$. Since the quantities from the assumptions of Theorem~\ref{theorem_Yuan_equidistribution} are birational invariant, without loss of generality $\ov{M} \in \ZDivQ(X)$. By existential closedness of $\GVFQ$ from \cite[Theorem A]{szachniewicz2023} there is a generic sequence of elements $x_n \in X(\GVFQ)$ such that $h_{\ov{D}}(x_n) \to l(\ov{D}) = 0$ and $h_{\ov{M}}(x_n) \to l(\ov{M})$. By Theorem~\ref{theorem_Yuan_equidistribution} we get that 
    \[ l(\ov{M}) = \frac{\ov{D}^d \cdot \ov{M}}{\deg(D)}. \]
    On the other hand, $l$ given by this formula on $X$ and its blowups, is a GVF functional, which finishes the proof.
\end{proof}

\begin{corollary}~\label{corollary:Yuan_equidistribution_as_GVF_convergence}
    Fix a generic sequence $(x_n)_{n \in \NN}$ satisfying the assumptions from Theorem~\ref{theorem_Yuan_equidistribution}. The sequence $x_n^{\an} \in X_{\GVF}$ converges to the point $x_{\infty}^{\an} \in X_{\GVF}$ defined by the generic point $x_{\infty} \in X(\Field)$ together with the GVF structure $l$ on $\Field$. Moreover, $l$ is induced by the polarisation $(X, \ov{D}, \dots, \ov{D})$.
\end{corollary}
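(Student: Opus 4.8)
The plan is to deduce this corollary directly from the preceding corollary (uniqueness of the GVF functional $l$) together with Theorem~\ref{theorem_Yuan_equidistribution} and the defining property of the topology on $X_{\GVF}$. There are two things to verify: first, that $x_n^{\an}$ converges to $x_\infty^{\an}$; second, that the GVF structure $l$ attached to $x_\infty$ is induced by the polarisation $(X,\ov{D},\dots,\ov{D})$. The second point is essentially the content of the proof of the previous corollary: the formula $l(\ov{M}) = \ov{D}^d\cdot\ov{M}/\deg(D)$ obtained there is precisely the functional attached to this polarisation (after normalising, noting $\deg(D) = D^d$ so the geometric intersection number condition ensuring the standard GVF structure on $\QQ$ holds up to the scaling built into the formula), so I would simply invoke it.

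For the convergence claim, I would unwind the definition of the topology on $X_{\GVF}$ from Section~\ref{sec:gvf_analytification}. A basic open neighbourhood of $x_\infty^{\an}$ is specified by (i) a constructible condition on $\pi(x)$ and (ii) finitely many conditions of the form $|h_x(f_1,\dots,f_k) - h_{x_\infty}(f_1,\dots,f_k)| < \varepsilon$ for tuples of regular functions $f_j \in \sO_X(U)$ on some Zariski open $U$. For (i): since $x_\infty$ is the generic point of $X$, every nonempty constructible set in the neighbourhood basis contains the generic point and hence contains $x_n$ for $n$ large because the sequence $(x_n)$ is generic (only finitely many $x_n$ lie in any proper closed subvariety). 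For (ii): a height of a tuple of regular functions can be written, via the GVF axioms, in terms of heights $h_{\ov{M}}$ with respect to suitable Zhang divisors $\ov{M}$ on $X$ (or a model), and Theorem~\ref{theorem_Yuan_equidistribution} gives exactly $\lim_n h_{\ov{M}}(x_n) = \ov{D}^d\cdot\ov{M}/\deg(D) = l(\ov{M}) = h_{x_\infty}(\text{the corresponding tuple})$. So each of the finitely many coordinate conditions is satisfied for $n$ large, and intersecting over the finite list gives $x_n^{\an}$ in the neighbourhood for all sufficiently large $n$.

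**The main obstacle** I anticipate is the bookkeeping in step (ii): translating ``$h_x(f_1,\dots,f_k)$ for $f_j$ regular functions on an open $U \subsetneq X$'' into ``$h_{\ov{M}}(x)$ for a Zhang divisor $\ov{M}$ on a projective model'', so that Yuan's theorem applies. One must choose a model $X'$ dominating $X$ on which the $f_j$ (or rather the line bundle $\sO(\max_j \adiv(f_j))$ together with the section data) extend to a Zhang divisor, use birational invariance of the quantities in Theorem~\ref{theorem_Yuan_equidistribution} (already invoked in the previous corollary's proof), and then note that $h_x(f_1,\dots,f_k) = h_x[f_1:\dots:f_k:1]$ is computed by the height with respect to that divisor plus a correction coming from the hyperplane at infinity — all of which are again covered by linearity and another application of Yuan. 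This is routine but slightly tedious; everything else is a formal consequence of the topology's definition and the uniqueness corollary.

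\begin{proof}
    The existence and uniqueness of $l$ is the content of the previous corollary, and its proof shows that $l(\ov{M}) = (\ov{D}^d\cdot\ov{M})/\deg(D)$ for every $\ov{M}\in\ZDivQ(\Field)$, which is exactly the GVF functional attached to the polarisation $(X,\ov{D},\dots,\ov{D})$; this proves the last sentence. It remains to show $x_n^{\an}\to x_\infty^{\an}$ in $X_{\GVF}$.

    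By definition of the topology on $X_{\GVF}$, it suffices to fix a basic open neighbourhood $W$ of $x_\infty^{\an}$ and show $x_n^{\an}\in W$ for $n\gg 0$. Such a $W$ is cut out by a constructible condition $C$ on $\pi(x)$ with $x_\infty\in C$, together with finitely many conditions
    \[
        \bigl| h_x(f_{i,1},\dots,f_{i,k_i}) - h_{x_\infty}(f_{i,1},\dots,f_{i,k_i}) \bigr| < \varepsilon, \qquad i = 1,\dots,N,
    \]
    where $f_{i,j}\in\sO_X(U_i)$ for some Zariski opens $U_i\subset X$. Since $x_\infty$ is the generic point of $X$ and $C$ is a constructible set containing it, $C$ contains a nonempty Zariski open of $X$; as $(x_n)$ is a generic sequence, $x_n\in C$ for all but finitely many $n$, so the constructible condition holds eventually.

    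For the height conditions, fix $i$. Since $x_\infty$ is the generic point, $f_{i,j}\in\Field^\times$ (for the trivial tuples the condition is vacuous), so $h_{x_\infty}(f_{i,1},\dots,f_{i,k_i}) = l\bigl(-\bigwedge_j\adiv(f_{i,j})\bigr)$. The element $-\bigwedge_j\adiv(f_{i,j})\in\LdivQ(\Field)$ is, after passing to a suitable projective model $X'\to X$, represented by a Zhang divisor $\ov{M}_i\in\ZDivQ(X')$, and by birational invariance of all quantities involved (as in the proof of the previous corollary) we may compute on $X'$. For $x = x_n^{\an}$ with $x_n\in X(\GVFQ)$ eventually lying in the locus where the $f_{i,j}$ are regular, one has likewise $h_{x_n}(f_{i,1},\dots,f_{i,k_i}) = h_{\ov{M}_i}(x_n)$ (up to the standard identification of the Mahler-type height of a tuple with the height with respect to the associated divisor, using the GVF axioms). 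By Theorem~\ref{theorem_Yuan_equidistribution},
    \[
        \lim_n h_{\ov{M}_i}(x_n) = \frac{\ov{D}^d\cdot\ov{M}_i}{\deg(D)} = l\bigl(-\bigwedge_j\adiv(f_{i,j})\bigr) = h_{x_\infty}(f_{i,1},\dots,f_{i,k_i}).
    \]
    Hence each of the finitely many height conditions is satisfied for $n\gg 0$. Intersecting the finitely many ``eventually'' statements, we get $x_n^{\an}\in W$ for $n\gg 0$, as required.
\end{proof}
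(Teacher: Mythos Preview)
The paper gives no proof for this corollary, treating it as immediate from the preceding uniqueness corollary and Theorem~\ref{theorem_Yuan_equidistribution}. Your argument is the natural one and correctly spells out what the paper leaves implicit: genericity of the sequence handles the constructible part of the topology, and Yuan's theorem applied to the Zhang divisor representing $-\bigwedge_j\adiv(f_{i,j})$ handles the height part.

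One small wrinkle worth noting: the polarisation $(X,\ov{D},\dots,\ov{D})$ as defined in the paper gives the functional $\ov{M}\mapsto\ov{D}^d\cdot\ov{M}$, without the factor $1/\deg(D)$, and this extends the standard GVF on $\QQ$ only when $\deg(D)=1$. You acknowledge this normalisation issue in your plan but then gloss over it in the formal proof. This is really an imprecision in the paper's statement rather than a flaw in your argument; still, it would be cleaner to say explicitly that $l$ is induced by the polarisation $(X,\ov{D},\dots,\ov{D})$ up to the scalar $1/\deg(D)$ (equivalently, by the polarisation $(X,\ov{D}/\deg(D)^{1/d},\dots,\ov{D}/\deg(D)^{1/d})$ after passing to $\QQ$-divisors).
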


Yuan's equidistribution theorem can be applied to the case when $X =\PP_{K}^d$ over a number field $K$ and $\cO(1)$ endowed with the Weil metrics(denoted by $\ov{\cO(1)}$).

\begin{corollary}
    There is a unique GVF structure on $\GVFQ(x_1, \dots, x_d)$ extending the standard one on $\GVFQ$ and satisfying $\height(x_1) = \dots = \height(x_d) = 0$.
\end{corollary}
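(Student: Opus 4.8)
The plan is to deduce this from the uniqueness statement recorded in the corollary that follows Theorem~\ref{theorem_Yuan_equidistribution} (equivalently, from Theorem~\ref{theorem_Yuan_equidistribution} together with the existential closedness of $\GVFQ$), applied to $X = \PP^d_K$ for a number field $K$, polarised by $\cO(1)$ equipped with the Weil metrics $\ov{\cO(1)}$. The hypotheses hold: $\cO(1)$ is ample and $\ov{\cO(1)}^{d+1} = 0$. So for each number field $K$ there is a unique GVF functional $l_K$ on $K(x_1,\dots,x_d)$ extending the standard structure on $\QQ$ (hence on $K$, by Lemma~\ref{lemma:uniqueness_of_GVF_on_Q}) with $l_K(\ov{\cO(1)}) = 0$. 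Since $\GVFQ(x_1,\dots,x_d) = \varinjlim_K K(x_1,\dots,x_d)$ over number fields $K$, and the $l_K$ are compatible under inclusions $K \subseteq K'$ by the uniqueness clause (the class of $\ov{\cO(1)}$ in $\ZDivQ$ and the standardness condition being preserved by restriction), they glue to a GVF structure $l$ on $\GVFQ(x_1,\dots,x_d)$ extending the standard structure on $\GVFQ$.

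The one nontrivial point is that the condition $l(\ov{\cO(1)}) = 0$ coincides with $\height(x_1) = \dots = \height(x_d) = 0$. For this I would unwind the defining formula $h(f_1,\dots,f_n) = l(-\bigwedge_i \adiv(f_i))$. Writing $x_0 := 1$ and $x_i = X_i/X_0$, and computing the join $\bigvee_{i=0}^d(-\adiv(x_i))$ coefficientwise on the smooth variety $\PP^d$ — the Cartier part is the hyperplane $\{X_0 = 0\}$ and the Green function part is $\log\max(1,\lvert x_1\rvert,\dots,\lvert x_d\rvert)$ — one identifies $-\bigwedge_{i=0}^d \adiv(x_i)$ with the image in $\ZDivQ$ of $\ov{\cO(1)}$ carrying the Weil metric (via the section $X_0$). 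Hence $l(\ov{\cO(1)}) = h(1,x_1,\dots,x_d)$ for every GVF functional $l$ on a field containing $\QQ(x_1,\dots,x_d)$. Now $(x_i,1)$ is, up to reordering, a subtuple of $(1,x_1,\dots,x_d)$, which in turn is a subtuple of the Segre product $(1,x_1)\otimes(1,x_2,\dots,x_d)$; so monotonicity, invariance and additivity of $h$ (with an obvious induction for the second inequality) give $\max_i \height(x_i) \le h(1,x_1,\dots,x_d) \le \sum_{i=1}^d \height(x_i)$. Since heights are non-negative, $h(1,x_1,\dots,x_d) = 0$ if and only if every $\height(x_i) = 0$, which is the claimed equivalence.

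Combining: $l$ satisfies $\height(x_i) = 0$ for all $i$ because $l(\ov{\cO(1)}) = 0$, which gives existence; and if $l'$ is any GVF structure on $\GVFQ(x_1,\dots,x_d)$ extending the standard one on $\GVFQ$ with all $\height(x_i) = 0$, then its restriction to each $K(x_1,\dots,x_d)$ extends the standard structure on $\QQ$ and, by the equivalence, satisfies $l'(\ov{\cO(1)}) = 0$, hence equals $l_K$; as $K$ ranges over all number fields this forces $l' = l$, giving uniqueness. The only real obstacle is the divisor-theoretic bookkeeping identifying the Weil-metrized $\ov{\cO(1)}$ with $-\bigwedge_{i=0}^d \adiv(x_i)$ inside $\ZDivQ(\GVFQ(x_1,\dots,x_d))$, together with checking that the cited corollary applies verbatim with $X = \PP^d_K$ in place of a variety over $\QQ$; both are routine, and everything else is formal from the corollary and the GVF axioms.
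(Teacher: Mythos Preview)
Your proposal is correct and follows essentially the approach the paper intends: the paper gives no explicit proof, only the sentence ``Yuan's equidistribution theorem can be applied to the case when $X = \PP^d_K$ over a number field $K$ and $\cO(1)$ endowed with the Weil metrics,'' and you carry out precisely this application, supplying the details the paper omits (the direct-limit passage from $K(x_1,\dots,x_d)$ to $\GVFQ(x_1,\dots,x_d)$, the identification of $-\bigwedge_{i=0}^d \adiv(x_i)$ with the Weil-metrized $\ov{\cO(1)}$, and the equivalence $l(\ov{\cO(1)})=0 \Leftrightarrow \height(x_i)=0$ for all $i$ via the GVF axioms).
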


\begin{corollary}~\label{corollary:Yuan_equidistribution_as_GVF_convergence_for_projective_space}
    Let $\pi_i:\prod_{i=1}^e \PP_K^d \to \PP_K^d$ be the $i$-th projection and let $\ov{L} = \sum_i \pi_i^* \ov{\cO(1)}$.
    Let $x_n \in \prod_{i=1}^e \PP_K^d(\GVFQ)$ be a generic sequence of small points, i.e., satisfying $h_{\ov{L}}(x_n) \to 0$. Let $\Field$ be the function field of $\prod_{i=1}^e \PP_K^d$ with the GVF structure coming from the polarisation $(\prod_{i=1}^e \PP_K^d, \pi_1^* \ov{\cO(1)}^d, \dots, \pi_e^* \ov{\cO(1)}^d)$. Then
    \[ \lim_n x_n^{\an} = x_{\infty}^{\an}, \]
    where $x_{\infty}^{\an} \in \PP_{\GVF}^d$ is defined by the generic point $x_{\infty} \in \PP_K^d(\Field)$. Moreover, naturally identifying $\Field$ with $K(x_{ij}:i\leq d, j\leq e)$, the GVF structure on $\Field$ is the unique one satisfying $\height(x_{ij})=0$ and $\height(2)=\log 2$.
\end{corollary}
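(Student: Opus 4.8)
The plan is to deduce this from the preceding Corollary~\ref{corollary:Yuan_equidistribution_as_GVF_convergence} applied to $X = \prod_{i=1}^e \PP_K^d$ with the semipositive Zhang divisor $\ov{D} = \ov{L} = \sum_i \pi_i^* \ov{\cO(1)}$, after renormalising. First I would check that $\ov{L}$ satisfies the hypotheses of Theorem~\ref{theorem_Yuan_equidistribution}: the underlying divisor $L = \sum_i \pi_i^* \cO(1)$ is ample on the product (being a sum of pullbacks of ample divisors along a finite product of projections, each component of which is, say, a coordinate projection), and with the Weil (Fubini--Study) metrics on each factor the arithmetic self-intersection $\ov{L}^{\dim X + 1}$ vanishes, because $\widehat{\cO(1)}^{d+1} = 0$ on $\PP_K^d$ and the arithmetic self-intersection of a sum of pullbacks expands multinomially into products of $\pi_i^* \widehat{\cO(1)}$'s, each term containing some factor to a power exceeding $d$, hence zero by the projection formula. (Strictly, $\dim X = de$, so one should note $\ov{L}^{de+1}$ is a multinomial sum of monomials $\prod_i (\pi_i^*\widehat{\cO(1)})^{a_i}$ with $\sum a_i = de+1$, forcing some $a_i \geq d+1$; the projection formula then kills that monomial.)

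Granting this, Corollary~\ref{corollary:Yuan_equidistribution_as_GVF_convergence} applies verbatim with $\ov{D} := \ov{L}$: a generic sequence $x_n$ of small points (i.e.\ $h_{\ov{L}}(x_n) \to 0$) has $x_n^{\an}$ converging in $X_{\GVF}$ to $x_\infty^{\an}$, where $x_\infty$ is the generic point of $X$ and the GVF structure $l$ on $\Field = K(x_{ij})$ is the one with $l(\ov{L}) = 0$. The point then lies in $\PP^d_{\GVF}$ (rather than just $X_{\GVF}$) because $\prod_i \PP^d_K$ maps to $\PP^d_K$ via $\pi_1$, and $\pi_1(x_\infty)$ is the generic point of $\PP^d_K$; convergence pushes forward along the continuous map $X_{\GVF} \to \PP^d_{\GVF}$ induced by $\pi_1$. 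It remains to identify the GVF structure $l$ with the polarisation $(\prod_i \PP_K^d, \pi_1^*\ov{\cO(1)}^d, \dots, \pi_e^*\ov{\cO(1)}^d)$ and with the normalisation $\height(x_{ij}) = 0$, $\height(2) = \log 2$. For the polarisation identification I would invoke the ``Moreover'' clause of Corollary~\ref{corollary:Yuan_equidistribution_as_GVF_convergence}, which gives that $l$ is induced by $(X, \ov{L}, \dots, \ov{L})$; expanding the self-intersection $\ov{L}^{de}$ multinomially, the only monomial with all exponents $\leq d$ is $\binom{de}{d,\dots,d}\prod_i (\pi_i^*\widehat{\cO(1)})^d$, and dividing by $\deg(L) = \binom{de}{d,\dots,d}\prod_i \deg(\cO(1)) = \binom{de}{d,\dots,d}$ (since each $\PP^d$ has degree $1$) shows the functional $l(\ov{M}) = \ov{L}^{de}\cdot\ov{M}/\deg(L)$ equals $\prod_i (\pi_i^*\widehat{\cO(1)})^d \cdot \ov{M}$, i.e.\ the stated polarisation. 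Finally, $\height(x_{ij}) = 0$ holds because $x_{ij}$ is a coordinate on the $i$-th factor and hence $\height(x_{ij}) = h_{\pi_i^*\ov{\cO(1)}}(x_\infty)$-type quantity that vanishes under $l$ (each $\pi_i^*\widehat{\cO(1)}$ appearing in $\ov{L}$ with $l(\ov{L})=0$ forces $l(\pi_i^*\widehat{\cO(1)}) = 0$ by effectivity, since all summands are semipositive); $\height(2) = \log 2$ is the standard normalisation on $K$, preserved under GVF extension; and uniqueness of such a GVF structure on $K(x_{ij})$ follows from Yuan's theorem exactly as in the proof of the corollary just before (the equidistribution limit is forced).

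The main obstacle I anticipate is purely bookkeeping: verifying the vanishing $\ov{L}^{de+1} = 0$ and correctly extracting $\deg(L)$ from the multinomial expansion, making sure the pullback/projection-formula manipulations on the product $\prod_i \PP^d_K$ are applied to the right factors. There is no genuine new idea needed beyond Corollary~\ref{corollary:Yuan_equidistribution_as_GVF_convergence}; the content is the translation of Yuan equidistribution on a product of projective spaces into GVF-convergence and the identification of the limiting GVF structure in explicit coordinates.
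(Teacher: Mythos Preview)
Your proposal is correct and follows precisely the route the paper intends: the corollary is stated without proof, as it is meant to follow directly from Corollary~\ref{corollary:Yuan_equidistribution_as_GVF_convergence} applied to $X=\prod_i\PP^d_K$ with $\ov{D}=\ov{L}$, together with the multinomial bookkeeping you outline. Two minor remarks: first, the clause ``$x_\infty^{\an}\in\PP^d_{\GVF}$'' in the statement is a typographical slip for $(\prod_i\PP^d_K)_{\GVF}$, so your detour through $\pi_1$ is unnecessary; second, for the identification of the two polarisations it is slightly cleaner to argue via uniqueness---both $(\ov{L},\dots,\ov{L})$ and $(\pi_1^*\ov{\cO(1)}^d,\dots,\pi_e^*\ov{\cO(1)}^d)$ yield $l(\ov{L})=0$, hence agree---rather than to verify directly that every off-diagonal monomial $\prod_i(\pi_i^*\ov{\cO(1)})^{a_i}\cdot\ov{M}$ with some $a_j>d$ vanishes (this is true, using that $c_1(\pi_j^*\ov{\cO(1)})^{a_j}$ vanishes as a measure for $a_j>d$ and that $\ov{\cO(1)}^{d+1}=0$ on $\PP^d$ with the Weil metric, but you did not spell it out).
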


\subsection{Convex geometry and toric varieties}~\label{sec:convex_geometry}
This section contains basic notions on Ronkin metrics on toric varieties and some calculations in convex geometry. For more details on Ronkin metrics we suggest \cite{gualdi_hypersurfaces_in_toric_varieties}. For an in depth treatment of the Arakelov geometry of toric varieties, see \cite{burgos_philippon_sombra_toric_varieties}.

Let $\bbT\cong\bbG_m^n$ be a split torus over a number field $K$. Denote by $N$ its co-character lattice and by $M$ its character lattice. Let $X_\Sigma$ denote the proper toric variety associated to a complete fan $\Sigma$ in $N_\bbR$. Let $\bbT \cong X_0 \subset X_\Sigma$ denote the open $\bbT$-orbit in $X_\Sigma$.

\begin{definition}
    A \emph{toric Cartier divisor} on a toric variety $X$ is defined to be a Cartier divisor which is invariant under the action of the torus $\mu:\bbT\times X \to X$. This means that a Cartier divisor $D$ is toric if $\mu^*D = \pi^*D$, where $\pi$ denotes the projection map.
\end{definition}

These are in bijection with virtual polytopes. This bijection is explained and proven in \cite[Section 3.3]{burgos_philippon_sombra_toric_varieties}.

\begin{definition}
    A \emph{virtual support function} or \emph{virtual polytope} with respect to a fan $\Sigma$ on $N_{\bbR}$ is a function $N_{\bbR} \to \bbR$ that is linear and integral on each cone in $\Sigma$.
\end{definition}

Let $v$ be a place of $K$. A $v$-adic Green's function $g_v$ for a toric Cartier divisor $D$ is called toric if its restriction to $(X_0)^{\an}$ factors through the tropicalization map $(X_0)^{\an} \to N_{\bbR}$, defined in \cite[Section 4.1]{burgos_philippon_sombra_toric_varieties}.

\begin{theorem}~\cite[Theorem 4.8.1.(1)]{burgos_philippon_sombra_toric_varieties}~\label{metrics_on_toric_divisors}
    Let $D$ be the toric divisor associated to the virtual support function $\Psi$. Then, the space of $v$-adic Green's functions for $D$ is in bijection with continuous functions $\psi:N_\bbR \to \bbR$ such that $\psi - \Psi$ is bounded. Concave functions correspond to semipositive metrics under this bijection.
\end{theorem}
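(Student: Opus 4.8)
The plan is to reduce everything to the dense torus, where toricity lets us replace the Green's function by a function on $N_\bbR$, and then to read off both assertions (the boundedness condition and the equivalence ``semipositive $\Leftrightarrow$ concave'') from a computation in the affine toric charts $U_\sigma$. First, recall that a $v$-adic Green's function $g$ for $D=D_\Psi$ is the same datum as a continuous metric on $\cO(D)^{\an}$, and being toric means that the restriction of $g$ to $X_0^{\an}\cong\bbT^{\an}$ factors through $\trop_v\colon X_0^{\an}\to N_\bbR$. Since $|D|$ lies in the toric boundary $X_\Sigma\setminus X_0$, the canonical rational section $s_D$ of $\cO(D)$ is nowhere vanishing on $X_0$, hence trivializes $\cO(D)|_{X_0}$, and $-\log\|s_D\|_v$ restricted to $X_0^{\an}$ is a continuous function on $X_0^{\an}$; toricity says precisely that this function equals $\psi\circ\trop_v$ for a continuous $\psi\colon N_\bbR\to\bbR$, unique by density of $X_0$ in $X_\Sigma$. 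This defines an injection $g\mapsto\psi$; it remains to identify its image and to match concavity with semipositivity.

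To pin down the image, cover $X_\Sigma$ by the affine charts $U_\sigma$ with $\sigma$ ranging over the maximal cones of $\Sigma$. On $U_\sigma$ the divisor $D$ is principal, with local equation the monomial $\chi^{-m_\sigma}$ where $m_\sigma\in M$ represents the linear function $\Psi|_\sigma$; consequently on $X_0$ one has $\log|\chi^{-m_\sigma}|_v=-\langle m_\sigma,\trop_v(\cdot)\rangle$, so $g+\log|\chi^{-m_\sigma}|_v=(\psi-\langle m_\sigma,\cdot\rangle)\circ\trop_v$ there. The map $\trop_v$ extends to the partial tropical compactification $N_\sigma$ of $N_\bbR$ attached to $\sigma$, and $U_\sigma^{\an}$ tropicalizes onto $N_\sigma$; so $g$ defines a Green's function over $U_\sigma^{\an}$ exactly when $\psi-\langle m_\sigma,\cdot\rangle$ has the correct bounded behaviour towards the boundary directions inside $\sigma$. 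Since $\langle m_\sigma,\cdot\rangle=\Psi$ on $\sigma$, assembling these conditions over all $\sigma$ is equivalent to $\psi-\Psi$ being bounded on all of $N_\bbR$; conversely, given such a $\psi$, the locally defined metrics on the $U_\sigma^{\an}$ are compatible on overlaps and glue to a genuine Green's function for $D$.

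For the last sentence: in the non-archimedean case, semipositive toric model metrics on $\cO(D)$ correspond, via toric models over the valuation ring and the nef/concavity dictionary for support functions, to rational piecewise-affine concave $\psi$ with $\psi-\Psi$ bounded; a general semipositive metric is a uniform limit of such model metrics, and a uniform limit of concave functions is concave, so every semipositive $\psi$ is concave, while conversely every concave $\psi$ with $\psi-\Psi$ bounded is a uniform limit of rational piecewise-affine concave functions and hence semipositive. In the archimedean case, the metric is semipositive iff its first Chern current is $\geq 0$; on $\bbT(\bbC)$ this current is unitary-invariant and, by the standard logarithmic convexity/concavity computation for functions of $(\log|z_1|,\dots,\log|z_n|)$, is nonnegative exactly when $\psi$ is concave; continuity of the metric then propagates positivity of the current across $|D|$ onto all of $X_\Sigma(\bbC)$, and conversely semipositivity forces $\psi$ concave on the dense torus.

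I expect the crux to be the middle step: making precise in what sense ``$g+\log|\chi^{-m_\sigma}|_v$ extends over $U_\sigma^{\an}$'' is equivalent to ``$\psi-\Psi$ bounded'' requires the explicit description of the tropicalization of the affine charts, the structure of the tropical toric variety $N_\Sigma$, and the compatibility of the pieces glued over the different charts. On the semipositive side the only non-formal points are the approximation of an arbitrary concave $\psi$ (with the asymptotics dictated by $\Psi$) by rational piecewise-affine concave functions, and the fact that positivity of a current extends across the toric boundary once the metric is continuous.
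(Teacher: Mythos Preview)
The paper does not give a proof of this statement: it is quoted verbatim as \cite[Theorem~4.8.1(1)]{burgos_philippon_sombra_toric_varieties} and used as a black box, so there is no ``paper's own proof'' to compare against. Your outline is a faithful sketch of the argument one finds in that reference: restrict to the dense torus via the trivializing section $s_D$, read off $\psi$ through $\trop_v$, and analyze the extension over each $U_\sigma$ using the monomial local equation $\chi^{-m_\sigma}$.

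One point deserves more care. You write that $g$ extends over $U_\sigma^{\an}$ ``exactly when $\psi-\langle m_\sigma,\cdot\rangle$ has the correct bounded behaviour'', and then conclude that gluing over all $\sigma$ gives the condition ``$\psi-\Psi$ bounded''. But a Green's function is required to be \emph{continuous}, and boundedness of a function on $N_\bbR$ does not by itself force a continuous extension to the tropical compactification $N_\Sigma$. What actually happens in \cite{burgos_philippon_sombra_toric_varieties} is that one first characterizes continuous toric metrics by continuous functions on $N_\Sigma$ (their Proposition~4.3.14), and then shows that such functions are exactly those $\psi$ on $N_\bbR$ for which $\psi-\Psi$ admits a continuous extension to $N_\Sigma$; the stated ``bounded'' condition is their shorthand for this, and in the semipositive/concave case boundedness genuinely suffices because a bounded concave function on a cone automatically extends continuously to the boundary strata. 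If you want a self-contained proof, you should make this distinction explicit rather than conflate ``bounded'' with ``extends continuously''.

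Your treatment of the semipositive\,$\Leftrightarrow$\,concave direction is correct in spirit for both the archimedean and non-archimedean cases, though each of the ingredients you name (density of rational piecewise-affine concave functions with the right asymptotics, extension of positivity of currents across $|D|$) is itself a lemma requiring proof; in \cite{burgos_philippon_sombra_toric_varieties} these occupy several pages.
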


If $\ov{D}_v$ is a toric divisor with a semipositive $v$-adic Green's function, Legendre-Fenchel duality associates to $\psi$ a concave function on the polytope $D_\Psi \subset M_\bbR$ associated to $\Psi$. It is called the \emph{roof function} of $\ov{D}_v$ and denoted by $\theta_{\ov{D}_v}$. For details, we refer to \cite[Theorem 4.8.1]{burgos_philippon_sombra_toric_varieties}.

A collection of functions $(\psi_v)_{v\in \M_K}$ defines a Zhang metric on the toric divisor corresponding to $\Psi$ if for all $v$ the difference $|\psi_v -\Psi|$ is bounded and for almost all $v$ we have $\psi_v = \Psi$. We call $\ov{D} \in \ZDiv(X_{\Sigma})$ a \textit{toric Zhang divisor}, if $D$ is a toric Cartier divisor, and the metrics on $\ov{D}$ come from a collection of functions $(\psi_v)_{v\in \M_K}$ satisfying the above condition.

The vector space $M_\bbR$ carries a Haar measure normalized in such a way that $M$ has covolume $1$. We associate to a compact convex set $\Delta$ its volume $\vol(\Delta)$ with respect to the Haar measure. Recall that the Minkowski sum of subsets $S_1,S_2$ of a vector space $V$ is defined by
\[
    S_1 + S_2 = \{ s_1 + s_2 \mid s_1 \in S_1, s_2\in S_2\}.
\]

The volume is a homogeneous polynomial on the space of compact convex sets with Minkowski addition. It can therefore be polarized, cf.\ \cite[Definition 2.7.14]{burgos_philippon_sombra_toric_varieties}.
\begin{definition}
    The mixed volume is a multilinear form on compact convex sets defined by
    \[
        \MV(\Delta_1,\dots,\Delta_n) = \sum^n_{j=1} (-1)^{n-j} \sum_{1\leq i_1 < \dots< i_j\leq n} \vol(\Delta_{i_1}+\dots+\Delta_{i_j}).
    \]
    It satisfies $\MV(\Delta,\dots,\Delta) = n!\vol(\Delta)$.
\end{definition}

Given a concave function $\theta$ on a compact convex set $\Delta$, we associate to it its integral $\int_\Delta \theta(m) dm$. Given concave functions $\theta_1$ on $\Delta_1$ and $\theta_2$ on $\Delta_2$, we define their sup-convolution by
\[
    \theta_1 \boxplus \theta_2 (m) = \sup_{m_1+m_2 = m} \theta(m_1) + \theta(m_2).
\]
It defines a concave function on $\Delta_1 + \Delta_2$. It is defined precisely such that the hypograph of $\theta_1 \boxplus \theta_2$ is the Minkowski sum of the hypographs of $\theta_1$ and $\theta_2$. One has a similar polarisation as in the case of mixed volumes, cf.\ \cite[Definition 2.7.16]{burgos_philippon_sombra_toric_varieties}.

\begin{definition}
    The mixed integral is a multilinear form on concave functions $\theta_0,\dots,\theta_n$ on compact convex sets $\Delta_0,\dots,\Delta_n$ defined by
    \[
        \MI(\theta_0,\dots,\theta_n) = \sum^n_{j=0} (-1)^{n-j} \sum_{0\leq i_0 < \dots< i_j\leq n} \int_{\Delta_{i_1}+\dots+\Delta_{i_j}} \theta_{i_1}\boxplus\dots\boxplus\theta_{i_j}(m) dm.
    \]
    It satisfies $\MI(\theta,\dots,\theta) = (n+1)!\int_\Delta \theta(m) dm$.
\end{definition}

These notions are helpful to express arithmetic intersection numbers combinatorially.

\begin{theorem}~\cite[Theorem 5.2.5]{burgos_philippon_sombra_toric_varieties}~\label{thm:convex_formula_for_toric_intersection}
    Let $\ov{D}_0,\dots,\ov{D}_n$ be semipositive toric Zhang divisors on $X_\Sigma$. Then,
    \[
    \adeg(\ov{D}_0,\dots,\ov{D}_n|X_\Sigma) = \sum_{v\in \M_K} n_v \MI_M(\theta_{0,v}, \dots, \theta_{n,v}),
    \]
    where $\theta_{i,v}$ is the roof function of $\ov{D}_{i,v}$, for every $i=0,\dots,n$ and $v \in \M_K$.
\end{theorem}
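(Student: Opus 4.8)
The plan is to argue by induction on $n=\dim X_\Sigma$, using that both sides of the identity satisfy the same recursion with the same base case. When $n=0$ the variety is $\Spec K$, each $\Delta_i$ is a point, the roof functions $\theta_{i,v}$ are constants, and the asserted formula is just the definition of the arithmetic height of a point. Throughout we rely on the classical Bernstein--Kushnirenko identity $\deg_{D_1\cdots D_n}(X_\Sigma)=\MV_M(\Delta_1,\dots,\Delta_n)$ for the geometric degrees, which in particular computes the total masses of the mixed Monge--Amp\`ere measures appearing below.

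For the inductive step, fix semipositive toric Zhang divisors $\ov D_0,\dots,\ov D_n$, with lattice polytopes $\Delta_i\subset M_\bbR$ and local concave functions $\psi_{i,v}$ on $N_\bbR$ (Theorem~\ref{metrics_on_toric_divisors}). Pick a vertex $m_0$ of $\Delta_n$ and the associated rational section $s$ of $\cO(D_n)$; its Weil divisor is a torus-invariant combination $\sdiv(s)=\sum_\rho a_\rho V(\rho)$ over the rays $\rho$ of $\Sigma$. The recursive formula for arithmetic intersection numbers then reads
\[
\adeg(\ov D_0\cdots\ov D_n\mid X_\Sigma)=\sum_\rho a_\rho\,\adeg\bigl(\ov D_0\cdots\ov D_{n-1}\mid V(\rho)\bigr)+\sum_{v\in\M_K}n_v\int_{X_{\Sigma,v}^{\an}}\bigl(-\log\|s\|_v\bigr)\,\bigwedge_{i=0}^{n-1}c_1(\ov D_{i,v}).
\]
Each $V(\rho)$ is a proper toric variety of dimension $n-1$, and the restrictions $\ov D_i|_{V(\rho)}$ are again semipositive toric divisors whose roof functions are the restrictions of the $\theta_{i,v}$ to the faces of $\Delta_i$ dual to $\rho$; hence the inductive hypothesis applies to the first sum. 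For the second sum the key geometric input is that, under tropicalization $\trop\colon X_{\Sigma,v}^{\an}\to N_\bbR$, the mixed Monge--Amp\`ere measure $\bigwedge_{i<n}c_1(\ov D_{i,v})$ pushes forward to the real mixed Monge--Amp\`ere measure $\mathrm{MA}_M(\psi_{0,v},\dots,\psi_{n-1,v})$ on $N_\bbR$, while $-\log\|s\|_v$ restricted to the torus equals $\psi_{n,v}-\langle\,\cdot\,,m_0\rangle$. Thus the $v$-adic local term equals $\int_{N_\bbR}\bigl(\psi_{n,v}(x)-\langle x,m_0\rangle\bigr)\,\mathrm{MA}_M(\psi_{0,v},\dots,\psi_{n-1,v})$.

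What remains is a purely convex-geometric identity: for each $v$, the facet contributions produced by the inductive hypothesis, plus $\int_{N_\bbR}\psi_{n,v}\,\mathrm{MA}_M(\psi_{0,v},\dots,\psi_{n-1,v})$, minus the linear correction $\int_{N_\bbR}\langle x,m_0\rangle\,\mathrm{MA}_M(\psi_{0,v},\dots,\psi_{n-1,v})$, must recombine exactly into $\MI_M(\theta_{0,v},\dots,\theta_{n,v})$. This follows from Legendre--Fenchel duality together with an integration-by-parts formula for mixed Monge--Amp\`ere measures (reinterpreting $\int_{N_\bbR}\psi_{n,v}\,\mathrm{MA}_M(\psi_{0,v},\dots,\psi_{n-1,v})$ through the polytopes and their faces), combined with the parallel recursive description of $\MI_M$ along the facets of the Minkowski sum $\Delta_0+\dots+\Delta_n$; choosing $m_0$ to be a \emph{vertex} is precisely what makes the linear correction cancel against the vertex contributions in that recursion. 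It is convenient to first reduce both sides to the case where every $\psi_{i,v}$ is piecewise affine, by continuity --- arithmetic intersection numbers are continuous under uniform convergence of the metrics, and mixed integrals are continuous in the roof functions --- which turns the local integrals into finite sums over a common polyhedral subdivision and makes the whole argument combinatorial.

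I expect the main obstacle to be the bookkeeping in this last step: $\MI_M$ is built from an inclusion--exclusion over all subsets of $\{0,\dots,n\}$, whereas the recursion produces a sum over rays $\rho$ of lower-dimensional mixed integrals together with a bulk term, and one must verify that these reassemble correctly --- in essence the polyhedral combinatorics underlying the arithmetic Bernstein--Kushnirenko formula. A secondary technical input is the push-forward formula for mixed Monge--Amp\`ere measures under tropicalization; by multilinearity and approximation by smooth semipositive metrics this reduces to the case of a smooth toric variety equipped with an equivariant smooth metric, where it is a direct Bedford--Taylor computation.
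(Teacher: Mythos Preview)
The paper does not prove this statement; it is quoted verbatim as \cite[Theorem 5.2.5]{burgos_philippon_sombra_toric_varieties} and used as a black box. So there is no proof in the paper to compare against.

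That said, your sketch is essentially the strategy used in the cited reference. Burgos--Philippon--Sombra prove the result by an induction on dimension of exactly the shape you describe: the recursive formula for arithmetic heights along an invariant section, combined with the fact that the Monge--Amp\`ere measure of semipositive toric metrics pushes forward under tropicalization to the real Monge--Amp\`ere measure of the associated concave functions (their Theorem 4.8.11), and a matching recursion for mixed integrals (their Proposition 2.7.20 and the surrounding material in \S2.7). Your honest flag that the bookkeeping in the convex-geometric recursion is the real work is accurate --- that step occupies a substantial portion of Chapter 2 of \cite{burgos_philippon_sombra_toric_varieties} and is not something one writes down in a paragraph. One small correction: the roof function of $\ov D_i|_{V(\rho)}$ is not literally the restriction of $\theta_{i,v}$ to the face of $\Delta_i$ dual to $\rho$, but rather its pushforward to the quotient lattice $M(\rho)$ (equivalently, the restriction followed by a translation identifying the face with a polytope in $M(\rho)_\bbR$); this is needed for the recursion to type-check. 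Otherwise the outline is sound.
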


For a nonzero Laurent polynomial $f \in K[M]$, we denote by $V(f)$ its vanishing locus on $X_\Sigma$. Let $NP(f)$ be the Newton polytope of $f$, i.e. the convex hull of the position of its non-zero coefficients. Then, $NP(f)$ defines a Cartier divisors on a suitable toric modification of $X_\Sigma$ by \cite[Section 3.4]{burgos_philippon_sombra_toric_varieties} such that $f$ gives rise to a regular section by \cite[Section 3.4]{burgos_philippon_sombra_toric_varieties}. It vanishes precisely on $V(f)$. This amounts to an easy check that the section restricts to a nontrivial section on codimension 1 toric subvarieties, see \cite[Theorem 4.3]{gualdi_hypersurfaces_in_toric_varieties}. We now define the Ronkin metric on the divisor $D_{NP(f)}$ corresponding to $NP(f)$. For this we use that the fibers $\trop^{-1}(u)$ of the tropicalization map $\trop:\bbT_v^{\an} \to N_\bbR$ carry a natural choice of probability measure $\sigma_u$. In the non-Archimedean case, it is concentrated on the Gauss point over $u$. In the archimedean case, it is induced by the Haar measure on $(S^1)^n$.

\begin{definition}~\cite[Definition 2.7]{gualdi_hypersurfaces_in_toric_varieties}
    Let $f$ be a nonzero Laurent polynomial over $K$. The Ronkin function of $f$ over a place $v$ is the map $\rho_f:N_\bbR \to \bbR$ defined by
    \[
        \rho_f: u \mapsto \int_{\trop^{-1}(u)} -\log| f(x)| d \sigma_u(x).
    \]
\end{definition}

By \cite[Proposition 2.10]{gualdi_hypersurfaces_in_toric_varieties}, $\rho_f$ is a concave continuous function with bounded difference from $\Psi_{NP(f)}$. By Theorem \ref{metrics_on_toric_divisors}, it defines a semipositive Green's function for $D(f)$. The collection of $v$-adic Ronkin functions gives rise to a Zhang divisor $R_f$ by \cite[Lemma 5.11]{gualdi_hypersurfaces_in_toric_varieties}.

\begin{theorem}(variation of \cite[Theorem 5.12]{gualdi_hypersurfaces_in_toric_varieties})
    Let $X_\Sigma$ be a proper toric variety. Let $f$ be a Laurent polynomial with vanishing locus $Z$ such that $NP(f)$ defines a divisor on $X_\Sigma$. Let $\ov{D}_0,\dots,\ov{D}_{n-1}$ be semipositive toric Zhang divisors on $X_\Sigma$. Then,
    \[
        \adeg(\ov{D}_0,\dots,\ov{D}_{n-1}|Z) = \adeg(\ov{D}_0,\dots,\ov{D}_{n-1},R_f|X_\Sigma).
    \]
\end{theorem}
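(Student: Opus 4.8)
The statement is a variation of \cite[Theorem 5.12]{gualdi_hypersurfaces_in_toric_varieties}, where the only modification is that the ambient toric variety is allowed to be an arbitrary proper $X_\Sigma$ (not necessarily smooth or projective in the stated form) as long as $NP(f)$ defines a Cartier divisor. The plan is to reduce to the original theorem by a toric modification argument together with the projection formula for arithmetic intersection numbers.

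First I would pass to a suitable toric modification. By \cite[Section 3.4]{burgos_philippon_sombra_toric_varieties}, the Newton polytope $NP(f)$ defines a Cartier divisor $D(f)$ on some toric variety $X_{\Sigma'}$ dominating $X_\Sigma$ via a proper birational toric morphism $p: X_{\Sigma'} \to X_\Sigma$, and $f$ gives rise to a regular section of $\cO(D(f))$ whose vanishing locus is the strict transform of $Z$. On $X_{\Sigma'}$ we may pull back the semipositive toric Zhang divisors $\ov{D}_0, \dots, \ov{D}_{n-1}$ via $p$; pullbacks of semipositive toric Zhang divisors are again semipositive toric Zhang divisors, with roof functions unchanged (the virtual support functions are the same, only the fan is refined). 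The divisor $R_f$, being built from the $v$-adic Ronkin functions of $f$, is naturally a toric Zhang divisor on $X_{\Sigma'}$ (this is \cite[Lemma 5.11]{gualdi_hypersurfaces_in_toric_varieties} applied on $X_{\Sigma'}$), and it is semipositive since the Ronkin functions are concave. Since arithmetic intersection numbers are invariant under proper birational pushforward (projection formula), we have
\[
\adeg(\ov{D}_0, \dots, \ov{D}_{n-1} | Z) = \adeg(p^*\ov{D}_0, \dots, p^*\ov{D}_{n-1} | Z'),
\]
\[
\adeg(\ov{D}_0, \dots, \ov{D}_{n-1}, R_f | X_\Sigma) = \adeg(p^*\ov{D}_0, \dots, p^*\ov{D}_{n-1}, R_f | X_{\Sigma'}),
\]
where $Z'$ is the strict transform of $Z$ in $X_{\Sigma'}$ (which equals $V(f)$ in $X_{\Sigma'}$).

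Now on $X_{\Sigma'}$ the situation is exactly that of \cite[Theorem 5.12]{gualdi_hypersurfaces_in_toric_varieties}: $D(f)$ is an honest Cartier divisor, $f$ cuts out $Z'$, and all the divisors involved are semipositive toric Zhang divisors. The key identity there is that $\cO(D(f))$ with its Ronkin metric, together with the section $f$, realizes the arithmetic analogue of ``multiplication by the cycle $[Z']$'': intersecting with $(R_f, \text{div}(f))$ is the same as restricting to $Z' = \text{div}(f)$. Concretely one writes $R_f = \widehat{\text{div}}(f) + (R_f - \widehat{\text{div}}(f))$ where the second term has roof function still $\rho_f^\vee$, uses that $\widehat{\text{div}}(f)$ contributes the cycle $[Z']$ by the arithmetic Poincaré–Lelong / refined intersection formula, and that the remaining Green-current contribution is absorbed into the restriction to $Z'$ via semipositivity and continuity of the metrics. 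Since the excerpt instructs me to assume results stated earlier, I would simply cite \cite[Theorem 5.12]{gualdi_hypersurfaces_in_toric_varieties} for this last step after the reduction.

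\textbf{Main obstacle.} The delicate point is verifying that the passage to $X_{\Sigma'}$ genuinely lands inside the hypotheses of the cited theorem — in particular that $R_f$ is a bona fide semipositive toric Zhang divisor on $X_{\Sigma'}$ and that $f$ is a \emph{global} regular section (not merely rational) of $\cO(D(f))$ so that $\widehat{\text{div}}(f)$ makes sense as an effective arithmetic cycle. This is handled by the check in \cite[Theorem 4.3]{gualdi_hypersurfaces_in_toric_varieties} that the section restricts nontrivially to each codimension-one toric stratum, which guarantees $V(f)$ has no toric-boundary components, hence the Ronkin-metric computation on the torus $\bbT_v^{\an}$ controls the whole arithmetic intersection. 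A secondary subtlety is bookkeeping with the trivial valuation and with places where $\psi_v = \Psi$, but there all the relevant roof functions are piecewise linear and the mixed-integral contributions are elementary, so no genuine difficulty arises.
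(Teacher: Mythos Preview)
The paper does not supply a proof for this statement: it is recorded as a ``variation of \cite[Theorem 5.12]{gualdi_hypersurfaces_in_toric_varieties}'' and left without argument, so there is no in-paper proof to compare against. The paper simply treats the result as known from the cited reference.

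As for your proposal itself, the overall strategy (reduce to the setting of Gualdi's original theorem via a toric modification and the projection formula) is sound, but the setup is muddled. You motivate the passage to $X_{\Sigma'}$ by saying that $NP(f)$ only defines a Cartier divisor after modification, yet the hypothesis of the theorem \emph{already} assumes $NP(f)$ defines a divisor on $X_\Sigma$; so that step is vacuous as written. If the genuine purpose of the modification is to land on a smooth projective toric variety (so that Gualdi's hypotheses apply verbatim), you should say so and choose $\Sigma'$ as a smooth projective refinement of $\Sigma$; then the projection formula argument you sketch goes through, since $R_f$ on $X_\Sigma$ pulls back to $R_f$ on $X_{\Sigma'}$ and $Z' = V(f)$ in $X_{\Sigma'}$ pushes forward to $Z$. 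Your informal description of Gualdi's argument (``write $R_f = \widehat{\mathrm{div}}(f) + \dots$'') is not accurate, but since you ultimately just cite the theorem this does not affect correctness.
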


For future use, we will relate the Ronkin functions and the Ronkin roof function of Laurent polynomials related by maps of tori.
\begin{definition}
    Let $\gamma: V \to W$ be a homomorphism of finite dimensional real vector spaces and $f:V\to\bbR\cup\{-\infty\}$ be a closed concave function with compact support. We define the direct image of $f$ along $\gamma$ by
    \[
    \gamma_* f(w)= \max_{v \in \gamma^{-1}(w)} f(v).
    \]
    It is a closed concave function with compact domain in $W$.
\end{definition}

\begin{lemma}\label{lemm:pushforward_of_ronkin}
    Let $\gamma:M \to M'$ be a map of lattices. Then, this induces a map on the rings of Laurent polynomials $K[\gamma]:K[M] \to K[M']$. Let $f \in K[M]\setminus \{0\}$. Then,
    \[
    \rho_{K[\gamma](f)} = \rho_f \circ \gamma^\vee
    \]
    and
    \[
    \rho_{K[\gamma](f)}^\vee = \gamma_* \rho_f^\vee(m).
    \]
\end{lemma}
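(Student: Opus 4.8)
The plan is to verify the two identities directly from the definitions, reducing the first to an elementary change-of-variables in the defining integral for the Ronkin function and deducing the second from the first by Legendre–Fenchel duality.

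First I would unwind the notation. A lattice map $\gamma: M \to M'$ dualizes to a map $\gamma^\vee: N' \to N$ on cocharacter lattices (where $N = M^\vee$, $N' = (M')^\vee$), and hence to a homomorphism of tori $\bbT' \to \bbT$. On monomials the ring map $K[\gamma]: K[M] \to K[M']$ sends $\chi^m$ to $\chi^{\gamma(m)}$; so if $f = \sum_m a_m \chi^m$ then $K[\gamma](f) = \sum_m a_m \chi^{\gamma(m)}$. The key point is that $\gamma^\vee$ is compatible with tropicalization: the square relating $\trop: (\bbT')^{\an}_v \to N'_\bbR$, $\trop: \bbT^{\an}_v \to N_\bbR$, the torus map $(\bbT')^{\an}_v \to \bbT^{\an}_v$, and $\gamma^\vee: N'_\bbR \to N_\bbR$ commutes, and the torus map pushes the fiber measure $\sigma_{u'}$ on $\trop^{-1}(u') \subset (\bbT')^{\an}_v$ forward to the fiber measure $\sigma_{\gamma^\vee(u')}$ (this is immediate in the non-archimedean case from functoriality of Gauss points, and in the archimedean case because a linear map of tori pushes a Haar measure to a Haar measure on the image, combined with the fact that $|\chi^m(x')|$ depends only on $\trop(x')$). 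Granting this, for $u' \in N'_\bbR$ we compute
\[
\rho_{K[\gamma](f)}(u') = \int_{\trop^{-1}(u')} -\log\bigl|\textstyle\sum_m a_m \chi^{\gamma(m)}(x')\bigr|\, d\sigma_{u'}(x') = \int_{\trop^{-1}(\gamma^\vee(u'))} -\log\bigl|\textstyle\sum_m a_m \chi^m(x)\bigr|\, d\sigma_{\gamma^\vee(u')}(x) = \rho_f(\gamma^\vee(u')),
\]
which is the first identity $\rho_{K[\gamma](f)} = \rho_f \circ \gamma^\vee$.

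For the second identity I would pass to Legendre–Fenchel duals. Recall $\rho_f^\vee$ is the concave function on $NP(f) \subset M_\bbR$ dual to the concave function $\rho_f$ on $N_\bbR$ (with bounded difference from $\Psi_{NP(f)}$), and similarly $\rho_{K[\gamma](f)}^\vee$ lives on $NP(K[\gamma](f)) = \gamma(NP(f)) \subset M'_\bbR$. The statement $\rho_{K[\gamma](f)}^\vee = \gamma_* \rho_f^\vee$ is then a general fact about Legendre duality: if $g$ on $N_\bbR$ and $h = g \circ \gamma^\vee$ on $N'_\bbR$ for a linear map $\gamma^\vee: N'_\bbR \to N_\bbR$ with transpose $\gamma: M_\bbR \to M'_\bbR$, then the concave-conjugate of $h$ is the direct image $\gamma_*$ of the concave-conjugate of $g$. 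Concretely, for $m' \in M'_\bbR$,
\[
h^\vee(m') = \inf_{u' \in N'_\bbR}\bigl(\langle m', u'\rangle - g(\gamma^\vee u')\bigr),
\]
and writing $\langle m', u'\rangle = \langle m', \gamma^\vee{}^\vee \text{(something)}\rangle$—more cleanly, grouping the infimum over $u'$ according to the value $u = \gamma^\vee(u') \in \operatorname{im}\gamma^\vee$ and using that $\langle m', u'\rangle$ descends appropriately—one gets $h^\vee(m') = \max_{m \in \gamma^{-1}(m')} g^\vee(m)$, exactly $(\gamma_* g^\vee)(m')$. Applying this with $g = \rho_f$ gives the claim (the parenthetical $(m)$ in the statement is a typo for the argument $m'$). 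I would either cite the relevant duality lemma from \cite{burgos_philippon_sombra_toric_varieties} (e.g.\ the behavior of Legendre–Fenchel duality under linear maps and direct images) or include the two-line infimum computation above.

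The main obstacle, and the only place requiring genuine care, is the measure-theoretic compatibility in the first step: one must check that the torus homomorphism $(\bbT')^{\an}_v \to \bbT^{\an}_v$ intertwines tropicalizations and pushes $\sigma_{u'}$ to $\sigma_{\gamma^\vee(u')}$, uniformly handling both the archimedean and non-archimedean places. Everything else is a formal manipulation of integrals and a standard property of the Legendre transform. I would isolate the measure compatibility as a short sublemma (or cite it from the setup in \cite[Section 4.1]{burgos_philippon_sombra_toric_varieties}) and then the two displayed computations finish the proof.
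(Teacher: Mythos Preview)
Your approach matches the paper's: the first identity from the definitions, the second via Legendre--Fenchel duality (the paper just cites \cite[Proposition 2.3.8(1)]{burgos_philippon_sombra_toric_varieties} for that step). The sublemma you isolate as the crux, however, has a genuine gap: the claim that the torus map pushes $\sigma_{u'}$ forward to $\sigma_{\gamma^\vee(u')}$ fails unless $\gamma:M\to M'$ is injective. At an archimedean place the induced map of compact tori $(S^1)^{n'}\to(S^1)^n$ is surjective exactly when $\gamma$ is injective; otherwise the pushforward of Haar measure is concentrated on a proper subtorus, so your parenthetical ``pushes a Haar measure to a Haar measure on the image'' is correct but does not give what you need. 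The non-archimedean case fails for the same reason: if $\gamma(m_1)=\gamma(m_2)$ then the restriction of the Gauss seminorm $\eta_{u'}$ along $K[\gamma]$ assigns to $a_1\chi^{m_1}+a_2\chi^{m_2}$ the value $|a_1+a_2|\,e^{-\langle\gamma(m_1),u'\rangle}$ rather than $\max(|a_1|,|a_2|)\,e^{-\langle\gamma(m_1),u'\rangle}$, so it is not the Gauss seminorm on $K[M]$. A concrete counterexample to the lemma as stated: $\gamma:\bbZ^2\to\bbZ$, $(m_1,m_2)\mapsto m_1+m_2$, $f=x_1+x_2$; then $K[\gamma](f)=2x$ and at the archimedean place $\rho_{K[\gamma](f)}(u)=u-\log 2$ while $\rho_f(\gamma^\vee u)=\rho_f(u,u)=u$.

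The remedy is simply to add the hypothesis that $\gamma$ is injective, which is the only case invoked in the paper (in Lemma~\ref{lemm:intersection_pushforward} the relevant lattice map is $M\to M\times M^m$, $\alpha\mapsto(\alpha,0,\dots,-\alpha,\dots,0)$). Under that hypothesis the compact-torus map is surjective so Haar pushes to Haar, Gauss points map to Gauss points since distinct monomials remain distinct, and your argument goes through as written.
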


\begin{proof}
    The first statement follows readily from the definitions. The second statement follows from the first using \cite[Proposition 2.3.8(1)]{burgos_philippon_sombra_toric_varieties}.
\end{proof}

We now prove generalizations of Lemma 1.11 and Proposition 1.12 in \cite{gualdi_hypersurfaces_in_toric_varieties}. We apply this to write the formulas in the preceding section in the form used in \cite[Conjecture 1]{gualdi:tel-01931089}.

\begin{lemma}
    Let $\Delta_1, \dots, \Delta_k$ be polytopes contained in a $k$-dimensional rational subspace $L$ and denote by $\pi$ the projection away from this subspace to the quotient $P$. Let $Q_1, \dots, Q_{n-k}$ be polytopes in $M_\bbR$. Then,
    \[
    \MV_M(\Delta_1, \dots, \Delta_k,Q_1, \dots, Q_{n-k}) = \MV_L(\Delta_1, \dots, \Delta_k)\cdot \MV_P(\pi(Q_1), \dots, \pi(Q_{n-k})).
    \]
\end{lemma}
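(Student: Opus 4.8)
The plan is to deduce the identity from Minkowski's polynomiality theorem for volumes of Minkowski combinations, combined with a Fubini decomposition of $\vol_M$ along the projection $\pi$, and then to match coefficients.

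The first step is to record the compatibility of the three Haar measures. Since $L$ is a \emph{rational} subspace, $M\cap L$ is a full rank lattice in $L$ and $\pi(M)$ is a full rank lattice in $P$, and they sit in a short exact sequence $0\to M\cap L\to M\to\pi(M)\to 0$ of free abelian groups. Choosing a set-theoretic splitting identifies a fundamental domain of $M$ with the product of fundamental domains of $M\cap L$ and of $\pi(M)$; hence, with $\vol_M$, $\vol_L$, $\vol_P$ normalized so these lattices have covolume $1$, the measure $\vol_M$ disintegrates along $\pi$ and we get Fubini's formula
\[
\vol_M(S)=\int_{P}\vol_L\bigl(S\cap\pi^{-1}(p)\bigr)\,dp
\]
for measurable $S\subseteq M_\bbR$, where each fibre $\pi^{-1}(p)$ is identified with $L$ by an arbitrary translation (legitimate, as $\vol_L$ is translation invariant).

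Now fix scalars $\lambda_1,\dots,\lambda_k\ge 0$ and $\mu_1,\dots,\mu_{n-k}\ge 0$, and put $\Delta:=\sum_i\lambda_i\Delta_i\subseteq L$ and $Q:=\sum_j\mu_jQ_j$. Since $\Delta\subseteq L=\ker\pi$, one has $(\Delta+Q)\cap\pi^{-1}(p)=\Delta+Q_p$ with $Q_p:=Q\cap\pi^{-1}(p)$, so Fubini gives $\vol_M(\Delta+Q)=\int_{\pi(Q)}\vol_L(\Delta+Q_p)\,dp$. For each fixed $p$, Minkowski's theorem shows $\vol_L(\sum_i\lambda_i\Delta_i+Q_p)$ is a polynomial of degree $\le k=\dim L$ in $(\lambda_1,\dots,\lambda_k)$, whose squarefree top-degree coefficient, that of $\lambda_1\cdots\lambda_k$, equals $\MV_L(\Delta_1,\dots,\Delta_k)$ — in particular it does not depend on $p$. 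Integrating over $\pi(Q)$, the coefficient of $\lambda_1\cdots\lambda_k$ in $\vol_M(\Delta+Q)$, regarded as a polynomial in $\lambda$, is $\MV_L(\Delta_1,\dots,\Delta_k)\cdot\vol_P(\pi(Q))$. Since $\pi(Q)=\sum_j\mu_j\pi(Q_j)$, the factor $\vol_P(\pi(Q))$ is a polynomial of degree $n-k=\dim P$ in $\mu$, and its $\mu_1\cdots\mu_{n-k}$ coefficient is $\MV_P(\pi Q_1,\dots,\pi Q_{n-k})$.

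Combining, the coefficient of $\lambda_1\cdots\lambda_k\,\mu_1\cdots\mu_{n-k}$ in $\vol_M(\sum_i\lambda_i\Delta_i+\sum_j\mu_jQ_j)$ equals $\MV_L(\Delta_1,\dots,\Delta_k)\cdot\MV_P(\pi Q_1,\dots,\pi Q_{n-k})$. On the other hand, by the polarisation formula for mixed volumes together with their symmetry, the same coefficient equals $\MV_M(\Delta_1,\dots,\Delta_k,Q_1,\dots,Q_{n-k})$, which is exactly the claimed identity. The only genuinely delicate point is the Haar normalization compatibility in the Fubini step, which is precisely where rationality of $L$ is needed; the remainder is bookkeeping with Minkowski's polynomiality theorem and coefficient extraction.
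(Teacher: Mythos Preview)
Your proof is correct and takes a genuinely different route from the paper. The paper expands the inclusion--exclusion definition of $\MV_M$ directly, reduces to showing $\sum_{I\subset\{1,\dots,k\}}(-1)^{|I|}\vol_M(\Delta_I+Q)=(-1)^k\MV_L(\Delta_1,\dots,\Delta_k)\vol_P(\pi(Q))$, applies Fubini to each summand, and then proves a combinatorial cancellation identity for $\sum_I(-1)^{|I|}\vol_L(\Delta_I+R)$ by expanding everything into mixed volumes and collapsing via $\prod(1-1)$. You instead invoke Minkowski's polynomiality theorem once: write $\vol_M(\sum_i\lambda_i\Delta_i+\sum_j\mu_jQ_j)$ as a polynomial, apply Fubini to identify its $\lambda_1\cdots\lambda_k$ coefficient as $\MV_L(\Delta_1,\dots,\Delta_k)\cdot\vol_P(\sum_j\mu_j\pi(Q_j))$, and then extract the $\mu_1\cdots\mu_{n-k}$ coefficient. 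Your argument is shorter and more conceptual; it sidesteps the combinatorics entirely by letting polarisation do the bookkeeping. The paper's approach, by contrast, stays closer to the explicit formula used elsewhere in the text and makes the cancellation mechanism visible. Both proofs rest on the same Fubini decomposition along $\pi$, which is where the rationality of $L$ enters, as you correctly note.
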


\begin{proof}
    By the definition of mixed volume one obtains
\begin{align*}
    &\MV_M(\Delta_1, \dots, \Delta_k,Q_1, \dots, Q_{n-k})\\
    &=\sum_{j=1}^{n-k} (-1)^{n-j}\sum_{1\leq i_1<\dots<i_j\leq n-k}\sum_{I\subset \{1,\dots,k\}}(-1)^{|I|}\vol(\Delta_I + Q_{i_1}+\dots+Q_{i_j}).
\end{align*}
Here $\Delta_I$ is taken to denote $\sum_{i\in I}\Delta_i$. It now suffices to show that 
\[\sum_{I\subset \{1,\dots,k\}}(-1)^{|I|}\vol(\Delta_I + Q) = (-1)^k \MV_L(\Delta_1, \dots, \Delta_k)\vol(\pi(Q)).\]
For this take any $p \in P$ and denote by $Q_p$ the preimage of $p$ in $Q$. We may view $\sum_{I\subset \{1,\dots,k\}}(-1)^{|I|}\vol(\Delta_I + Q)$ as an integral over $\pi(Q)$, namely as
\[
\int_{\pi(Q)} \sum_{I\subset \{1,\dots,k\}}(-1)^{|I|}\vol(\Delta_I + Q_p).
\]
In order to conclude, we need to show that 
$\sum_{I\subset \{1,\dots,k\}}(-1)^{|I|}\vol(\Delta_I + R) = \MV_L(\Delta_1, \dots, \Delta_k)$ for any polytope $R$ in the k-dimensional subspace spanned by the $\Delta_i$. For this we decompose the expression by writing out the volumes as mixed volumes and order by the number of $R$ occurring in the expansion.
\begin{align*}
    &\sum_{I\subset \{1,\dots,k\}}(-1)^{|I|}\vol(\Delta_I + R)\\ &= 
\sum_{s=0}^{k} {\binom{k}{s}}\sum_{I\subset \{1,\dots,k\}}(-1)^{|I|} \sum_{1\leq j_1 \leq\dots\leq j_{k-s}\leq k, j_i \in I} {\binom{k-s}{J}}\MV(\Delta_{j_1},\dots,\Delta_{j_{k-s}},R,\dots,R).
\end{align*}

Here $\binom{k-s}{J}$ is taken to denote the number of partitions of $k-s$ elements into partitions of form $J$, i.e. the multinomial coefficient for $k-s$ and $\#\{i|j_i=m\}$. We reorder the sum to sum over size $k-s$ multisets of $\{1,\dots,k\}$. Fix a multiset $1\leq j_1 \leq\dots\leq j_{k-s}\leq k$ of order $k-s$ in elements of $\{1,\dots k\}$. Then, its contribution to the sum is 
\[
\binom{k}{s}\binom{k-s}{J}\MV(\Delta_{j_1},\dots,\Delta_{j_{k-s}},R,\dots,R) \sum_{I\supseteq J} (-1)^{|I|},
\]
where containment is understood on the level of underlying sets. By comparing to the expansion of $\prod_{I\setminus J}(1-1)$ we see that this vanishes for $J\neq I$ and is $(-1)^k$ for $J=I$.
\end{proof}

\begin{lemma}
    \label{lemm:convex_pushforward}
    Let the $g_i$ be concave functions on polytopes $Q_i$ and $\Delta_i$ as before. Then,
    \[
\MI_M(\iota_{\Delta_1},\dots,\iota_{\Delta_k},g_1,\dots,g_{n-k+1}) = \MV_L(\Delta_1, \dots, \Delta_k)\cdot \MI_P(\pi_*g_1, \dots, \pi_*g_{n-k+1}).
    \]
\end{lemma}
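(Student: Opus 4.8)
The plan is to reduce the identity to the mixed-volume splitting of the previous lemma by passing to hypographs in one auxiliary dimension. By an easy induction on the number of the $g_i$ that fail to be nonnegative --- using that both sides of the asserted identity are additive for the sup-convolution $\boxplus$ in each argument $g_i$ (the mixed integral is additive for $\boxplus$, and $\pi_*$ is a $\boxplus$-homomorphism), and that adding a constant $C\ge 0$ to $g_i$ amounts to forming $g_i\boxplus\ell_C$, where $\ell_C$ is the concave function equal to $C$ at the origin and $-\infty$ elsewhere, a function fixed by $\pi_*$ --- it suffices to treat the case where every $g_i$ is nonnegative.

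So assume $g_i\ge 0$. Recall the dictionary between mixed integrals and mixed volumes (cf.\ \cite[Section~2.7]{burgos_philippon_sombra_toric_varieties}): for a nonnegative closed concave function $h$ on a polytope $Q$ in a real vector space $V$ equipped with a lattice, set
\[
\Gamma(h)=\{(x,t)\in V\times\bbR : x\in Q,\ 0\le t\le h(x)\},
\]
a compact convex set with $\vol(\Gamma(h))=\int_Q h\,dm$ and $\Gamma(h_1\boxplus h_2)=\Gamma(h_1)+\Gamma(h_2)$. Polarising $\MI(\theta,\dots,\theta)=(\dim V+1)!\int_Q\theta\,dm$ against $\MV(\Gamma(\theta),\dots,\Gamma(\theta))=(\dim V+1)!\,\vol(\Gamma(\theta))$ yields
\[
\MI_V(h_0,\dots,h_{\dim V})=\MV_{V\times\bbR}(\Gamma(h_0),\dots,\Gamma(h_{\dim V})),
\]
and note that $\Gamma(\iota_\Delta)=\Delta\times\{0\}$.

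Applying this with $V=M_\bbR$ rewrites the left-hand side of the lemma as
\[
\MV_{M_\bbR\times\bbR}\bigl(\Delta_1\times\{0\},\dots,\Delta_k\times\{0\},\Gamma(g_1),\dots,\Gamma(g_{n-k+1})\bigr).
\]
Now I apply the previous lemma in the ambient space $M_\bbR\times\bbR$ with lattice $M\times\bbZ$, taking the $k$-dimensional rational subspace $L\times\{0\}$ and the projection $\widetilde\pi\colon M_\bbR\times\bbR\to (M_\bbR\times\bbR)/(L\times\{0\})=P\times\bbR$, $(x,t)\mapsto(\pi(x),t)$. Using $\MV_{L\times\{0\}}(\Delta_1\times\{0\},\dots,\Delta_k\times\{0\})=\MV_L(\Delta_1,\dots,\Delta_k)$, this gives
\[
\MV_L(\Delta_1,\dots,\Delta_k)\cdot\MV_{P\times\bbR}\bigl(\widetilde\pi\,\Gamma(g_1),\dots,\widetilde\pi\,\Gamma(g_{n-k+1})\bigr).
\]
Finally, over a point $p\in\pi(Q_j)$ the fibre of $\widetilde\pi\,\Gamma(g_j)$ equals $\{t:0\le t\le g_j(x)\text{ for some }x\in Q_j\cap\pi^{-1}(p)\}=[0,\pi_*g_j(p)]$, so $\widetilde\pi\,\Gamma(g_j)=\Gamma(\pi_*g_j)$; applying the dictionary once more, now with $V=P$, identifies the second factor with $\MI_P(\pi_*g_1,\dots,\pi_*g_{n-k+1})$, which is exactly the claim.

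The one step needing care is the hypograph dictionary for concave functions that are not piecewise linear: then $\Gamma(g_i)$ and $\widetilde\pi\,\Gamma(g_i)$ need not be polytopes, so one extends the previous lemma from polytopes to arbitrary compact convex bodies by continuity of mixed volumes in the Hausdorff metric (equivalently, one approximates the $g_i$ uniformly by piecewise-linear concave functions and uses continuity of the mixed integral). The remaining points --- compatibility of the Haar normalisations on $M_\bbR\times\bbR$ (by Fubini) and on $P$ (the pushforward lattice), and the reduction to $g_i\ge 0$ --- are routine.
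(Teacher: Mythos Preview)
Your proposal is correct and follows essentially the same approach that the paper defers to (namely \cite[Proposition~1.12]{gualdi_hypersurfaces_in_toric_varieties}): reduce the mixed integral to a mixed volume via the hypograph construction in one extra dimension, apply the mixed-volume splitting of the previous lemma in $M_\bbR\times\bbR$ with subspace $L\times\{0\}$, and identify $\widetilde\pi\,\Gamma(g_j)$ with $\Gamma(\pi_*g_j)$. Your explicit handling of the reduction to $g_i\ge 0$ and of the passage from polytopes to general convex bodies is more detailed than the paper's one-line citation, but not different in substance.
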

\begin{proof}
    The reduction to the previous Lemma is precisely as in \cite[Proposition 1.12]{gualdi_hypersurfaces_in_toric_varieties}.
\end{proof}

\section{Intersection product}~\label{sec:int_product}

In this section we study the intersection product defined in \cite{Adelic_curves_2} and prove that it varies continuously in flat families. We will treat the case of a trivial GVF in a separate section as the natural class of line-bundles differs from the non-trivial setting. Our main result is the following theorem. 
\begin{theorem}\label{thm:continuity_of_intersection_number}
    Let $\cX \to S$  be a flat projective morphism of finite type schemes over a GVF $K$ of relative dimension $d$. Let $\ov{\sL}_0,\dots,\ov{\sL}_d\in \intPicQ(\cX/S)$ be globally integrable line bundles on $\cX$ over $S$. Then, the map
    \begin{align*}
        S_{\GVF} &\to \bbR\\
        s&\mapsto\adeg(\ov{\sL}_0,\dots,\ov{\sL}_d|\sX_s)
    \end{align*}
    is continuous.
\end{theorem}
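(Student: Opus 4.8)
The plan is to reduce the statement to two independent facts: a density statement about the points at which continuity is easy, and an approximation statement for integrable line bundles. First I would fix a point $s_\infty \in S_{\GVF}$, with underlying scheme point $\pi(s_\infty)$ and height $h_{s_\infty}$ on $\kappa(\pi(s_\infty))$; the fiber $\sX_{s_\infty}$ is then a projective scheme over the residue GVF $\Field := (\kappa(\pi(s_\infty)), h_{s_\infty})$, and $\adeg(\ov{\sL}_0,\dots,\ov{\sL}_d|\sX_{s_\infty})$ is the intersection product over $\Field$ defined (via adelic curves) in Proposition~\ref{proposition:intersection_product_over_a_GVF}. By the definition of the topology on $S_{\GVF}$ and the fact that $S_{\GVF}$ is locally compact Hausdorff (being a space of quantifier-free types, Remark~\ref{remark:GVF_analytification_as_qf_GVF_types}), it suffices to show sequential continuity: for every net (or sequence, after reducing to an affine chart and a countable language) $s_j \to s_\infty$ one has $\adeg(\dots|\sX_{s_j}) \to \adeg(\dots|\sX_{s_\infty})$. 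After shrinking $S$ to an affine neighborhood of $\pi(s_\infty)$ and spreading out, I may assume the $\ov{\sL}_i$ are presented by explicit data (sections, local equations, and adelic/GVF Green's functions) defined over $K$, so that on each fiber the height is computed by a concrete formula in the coordinates of $s_j$.

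The core reduction is semipositive-to-integrable. Writing each $\ov{\sL}_i = \ov{\sA}_i - \ov{\sB}_i$ with $\ov{\sA}_i, \ov{\sB}_i$ globally semipositive and using multilinearity of $\adeg(\cdot|\sX_s)$, it is enough to prove continuity when all $\ov{\sL}_i$ are semipositive, and then — since a semipositive metric is a uniform limit of "nice" (e.g. model or smooth) semipositive metrics, with the intersection number depending continuously on the metric uniformly in $s$ (this uniformity is exactly where the Appendix~\ref{appendix:A} Mahler-measure estimate enters) — to prove it for nice semipositive bundles. In the model/algebraic case the intersection number on the fiber decomposes as a finite sum over the places appearing in the model of local terms, each of which is, place by place, either (i) a non-archimedean local intersection number that is \emph{locally constant} in the Zariski (hence constructible) topology on $S$ by the geometric constructibility theorem (Hartshorne III.9.10 as quoted), so constant near $\pi(s_\infty)$ and harmless, or (ii) an archimedean/Green-current contribution which is an integral of a $\log$ of a polynomial in the coordinates against a fixed measure, visibly continuous in $s$; summing and integrating over the GVF structure against the converging heights $h_{s_j}$ gives convergence to the value at $s_\infty$. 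The point is that "generic sequence + convergence of all heights $h_{\ov M}(s_j)$" is precisely convergence in $S_{\GVF}$, so the GVF structure on the residue field of $s_\infty$ is the limit, and the local terms assemble accordingly.

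The main obstacle I anticipate is the \emph{uniformity} in the approximation step: one must control $\bigl|\adeg(\ov{\sL}_0,\dots,\ov{\sL}_d|\sX_s) - \adeg(\ov{\sL}_0',\dots,\ov{\sL}_d'|\sX_s)\bigr|$ by the sup-norm distances $\|\ov{\sL}_i - \ov{\sL}_i'\|$ with a bound independent of $s \in S_{\GVF}$. Over a single number field this is the standard estimate $|\adeg(\ov L_0\dots\ov L_d) - \adeg(\ov L_0'\dots)| \le \sum_i \|\ov L_i - \ov L_i'\| \cdot \deg(\text{boundary terms})$, but here the ambient field varies, the number of places is unbounded, and the archimedean error $e$ of the GVF can be positive, so one needs a bound phrased purely in terms of the height functions $h_{s}$ evaluated on finitely many sections — this is what forces the elementary polynomial Mahler-measure estimate of Appendix~\ref{appendix:A}, and carefully tracking how that estimate feeds into the difference of intersection numbers (including the normalization of the trivial-valuation contribution, treated separately) is the delicate part. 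Once that uniform bound is in place, continuity at $s_\infty$ follows by a standard $3\varepsilon$-argument: approximate the semipositive metrics by nice ones uniformly in $s$, use the easy continuity (constructible-constant at non-archimedean places plus continuous archimedean integrals) for the nice metrics along $s_j \to s_\infty$, and pass to the limit.
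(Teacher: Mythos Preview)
Your outer structure — reduce to semipositive by multilinearity, approximate by ``nice'' metrics, then run a $3\varepsilon$ argument — matches the paper exactly (Theorem~\ref{thm:continuity_approximation} and Proposition~\ref{proposition_definability_for_simple_famielies}). The uniform approximation step is also handled the way you suggest: the bound $|\ov{L}\cdot\ov{L}_1\cdots\ov{L}_d|\le d(\phi,0)\deg(L_1\cdots L_d)$ (proof of Proposition~\ref{proposition:intersection_product_over_a_GVF}) gives uniform-in-$s$ convergence once the geometric degrees $\deg(L_1\cdots L_d)$ are bounded on a compact $C\subset S_{\GVF}$, and flatness makes them actually constant. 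So far so good.

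The genuine gap is in your treatment of the ``nice'' base case. You propose to decompose the fibrewise intersection number into local contributions over the places and argue that the non-archimedean pieces are locally constant via Hartshorne III.9.10. This does not work: Hartshorne gives constancy of the \emph{geometric} degree in flat families, not of local arithmetic intersection numbers. The non-archimedean local height of the fibre over $s$ depends on the valuations of the coordinates of $s$ (equivalently, on the reduction of $s$ at each finite place), and these vary nontrivially as $s$ moves in $S_{\GVF}$ --- indeed, this variation is exactly what the topology on $S_{\GVF}$ is designed to see. More fundamentally, for a general $s\in S_{\GVF}$ the intersection number is computed over the GVF $\kappa(s)$, whose space of places bears no simple relation to that of $K$, so there is no fixed finite set of places to sum over. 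The paper avoids this entirely: for \emph{simple} (lattice) line bundles --- pullbacks of the Weil metric along embeddings $\cX\hookrightarrow\PP^n_S$ --- it expresses the intersection number on $\sX_s$ as $\lim_n \tfrac{1}{n^{d+1}}\height(R_n(s))$, where the $R_n$ are resultants with coefficients in $\cO_S(S)$ (Lemma~\ref{lemma:uniform_resultants}, Proposition~\ref{proposition_formula_for_adelic_intersection_as_heights_of_resultants}). Each $s\mapsto\height(R_n(s))$ is continuous on $S_{\GVF}$ \emph{by definition} of the topology, and the convergence is uniform in $s$ by an explicit rate (Remark~\ref{remark_uniformity_in_height_of_resultants_formula}). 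That uniform rate is where Appendix~\ref{appendix:A} actually enters: it compares the Fubini--Study intersection number with the naive height of the resultant at archimedean places --- not, as you suggest, in controlling the metric-approximation error.
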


This can be applied in various settings of interest to number theorists. This can be seen for instance by the following corollary.

\begin{corollary}
    Suppose that $S$ is projective over $\bbQ$ and $\cX \to S$ a projective morphism. Then, any integrable Zhang line bundle on $\sX$ is a globally integrable line bundle in $\intPicQ(\cX/S)$. 
\end{corollary}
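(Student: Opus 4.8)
The strategy is to exhibit a natural map, additive under tensor product, that carries Zhang line bundles on $\cX$ into $\intPicQ(\cX/S)$, and then to check that it sends semipositive ones to globally semipositive ones. First note that since $S$ is projective over $\QQ$ and $\cX\to S$ is projective, the composite $\cX\to\Spec\QQ$ is projective, so Zhang line bundles on $\cX$ are defined. Since $\intPicQ(\cX/S)$ is the $\QQ$-vector space spanned by the globally semipositive line bundles, and an integrable Zhang line bundle is, up to $\QQ$-coefficients, a difference of two semipositive ones, it is enough to send each semipositive Zhang line bundle $\ov{\sL}$ on $\cX$ to a globally semipositive line bundle on $\cX/S$ with underlying bundle $\sL$, compatibly with tensor products, and then extend $\QQ$-linearly.

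The map is given by restriction to the fibres over points of $S_{\GVF}$. Fix $s=(p,h_s)\in S_{\GVF}$ and represent $h_s$ by a proper adelic curve structure $(\Omega_s,\cA_s,\nu_s,(\lvert\cdot\rvert_\omega)_{\omega\in\Omega_s})$ on $\kappa(p)$. For $\omega\in\Omega_s$ the absolute value $\lvert\cdot\rvert_\omega$ lies over a place $v(\omega)$ of $\QQ$, and pulling back the $v(\omega)$-adic metric of $\ov{\sL}$ along the projection $\cX_p=\cX\times_S\Spec\kappa(p)\to\cX$ yields a continuous $\omega$-adic metric on $\sL\vert_{\cX_p}$. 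As $\omega$ varies these assemble into an adelic line bundle on $\cX_p$ over the adelic curve $\kappa(p)$; it is dominated because at all but finitely many places $v$ of $\QQ$ the metric of $\ov{\sL}$ is a model metric, which pulls back to a model metric, and it depends only on $s$, not on the chosen representative of $h_s$. This family, over all $s\in S_{\GVF}$, is the candidate global line bundle $\ov{\sL}_{/S}$; when $p$ is a closed point and $\kappa(p)$ is a number field, this recovers exactly the comparison of Theorem~\ref{theorem:Zhang_divisors_are_induced_by_global_divisors}.

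It then remains to check that $\ov{\sL}_{/S}$ satisfies the axioms of Definition~\ref{definition:global_line_bundles} and is globally semipositive. Fibrewise semipositivity is immediate: a semipositive Zhang metric is, at each place of $\QQ$, a uniform limit of metrics induced by nef models of $(\cX,\sL)$ over open subschemes of $\Spec\ZZ$ (and of semipositive smooth metrics at the archimedean place); such models base-change along $\cX_p\to\cX$ to nef models of the fibre, and pulling back a quotient of two metrics can only shrink the supremum norm, so the approximation property is preserved. The remaining point --- and, I expect, the only substantial one --- is the uniformity/continuity axiom of Definition~\ref{definition:global_line_bundles}: one must show that the fibrewise data $(\ov{\sL}\vert_{\cX_s})_{s}$ varies continuously over $S_{\GVF}$. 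The plan there is to note that the whole family is controlled by a fixed finite amount of algebraic data --- the chosen models of $(\cX,\sL)$ over $\Spec\ZZ$, whose restriction to fibres is purely algebraic and hence continuous over $S$ with its constructible topology --- together with a rate of approximation toward the Zhang metric that is uniform in the place; the quantitative ingredient here is precisely of the type proved in Appendix~\ref{appendix:A} and already used in the proof of Theorem~\ref{thm:continuity_of_intersection_number}. Granting this, $\ov{\sL}\mapsto\ov{\sL}_{/S}$ sends semipositive Zhang line bundles to globally semipositive ones, and by the reduction of the first paragraph an integrable Zhang line bundle lands in $\intPicQ(\cX/S)$, as claimed.
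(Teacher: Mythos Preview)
Your proposal misses the short route the paper takes, and the route you sketch for the one hard step does not go through.

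The paper's proof is two lines: since $\cX$ is projective over $\QQ$, Theorem~\ref{theorem:Zhang_divisors_are_induced_by_global_divisors} already says that an integrable Zhang line bundle on $\cX$ lies in $\intPicQ(\cX)=\intPicQ(\cX/\Spec\QQ)$; one then observes that $\intPicQ(\cX)\subset\intPicQ(\cX/S)$, because any closed embedding $\cX\hookrightarrow\PP_\QQ^n$ composed with the structure map $\cX\to S$ yields a closed embedding $\cX\hookrightarrow\PP_S^n$ over $S$ with the same pulled-back metric, and the distance $d_C$ over any compact $C\subset S_{\GVF}$ is bounded by the absolute distance $d$ over $(\Spec\QQ)_{\GVF}$. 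You actually mention Theorem~\ref{theorem:Zhang_divisors_are_induced_by_global_divisors}, but only as a sanity check at closed points rather than as the engine of the argument; used correctly, it removes the need for any fibrewise construction.

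Your fibre-by-fibre description is also more than is needed: a Zhang line bundle on $\cX$ and a candidate global line bundle on $\cX/S$ have literally the same underlying data (a line bundle with a family of metrics indexed by places of $\QQ$), so there is no ``map'' to build --- only a condition to verify. The condition you correctly isolate as the ``remaining point'' is the approximation axiom of Definition~\ref{definition:global_line_bundles}, but the tools you point to are the wrong ones. Appendix~\ref{appendix:A} bounds mixed Mahler measures of polynomials and enters only in comparing heights of resultants (Proposition~\ref{proposition_formula_for_adelic_intersection_as_heights_of_resultants}); it says nothing about approximating semipositive Zhang metrics by quotient metrics coming from projective embeddings. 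That approximation is exactly the content of Theorem~\ref{theorem:Zhang_divisors_are_induced_by_global_divisors}, whose proof uses Zhang's results on small sections and an arithmetic Demailly-type argument --- a quite different circle of ideas.
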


\begin{proof}
    It is a weaker property to be an element of $\intPicQ(\cX/S)$ than of $\intPicQ(\cX)$. Hence, the statement follows from Theorem \ref{theorem:Zhang_divisors_are_induced_by_global_divisors}.
\end{proof}

\begin{remark}
    The property of being globally integrable is preserved under base change. In particular, one may apply Theorem \ref{thm:continuity_of_intersection_number} to integrable Zhang divisors on a projective $\cX$ after restricting to the flat locus of $\cX \to S$.
\end{remark}

\subsection{Lattice divisors}~\label{subsection:Lattice_divisors}
Let $K$ be a countable GVF. We can assume that it is represented by a proper adelic curve $(K, (\Omega, \cA, \nu), \phi)$ with $\Omega = M_{K}$, the trivial absolute value having zero mass, and the restriction of the measure to the archimedean places $\nu|_{\Omega_\infty}$ being supported at normalized valuations (i.e., satisfying $v(2)=-\log 2$). Moreover, whenever we consider a GVF extension $K \subset \Field$ in this subsection, we assume that $\Field$ is also countable and the GVF structure on $\Field$ is induced by an adelic curve structure with the same properties (see \cite[Section 9]{basics_of_gvfs}).

We recall ideas from the theory of adelic curves only briefly. We refer to~\cite{Adelic_curves_1, Adelic_curves_2} for details. Let us start by recalling a definition.

\begin{definition}
    Let $X$ be a finite type $K$-scheme with a line bundle $L$. A \textit{metric family} on $L$ is a family $\varphi = (\varphi_{\omega})_{\omega \in \Omega}$, where each $\varphi_{\omega}$ is a continuous metric on $L_{\omega}$ on $X_{\omega}^{\an}$. Here $X_{\omega}^{\an}$ is the Berkovich analytification of $X_\omega = X \otimes_K K_{\omega}$, where $K_{\omega}$ is the completion of $K$ with respect to the absolute value $\omega \in \Omega = M_K$. We call a pair $\ov{L} = (L,\varphi)$ a \textit{metrized line bundle} on $X$ over $K$.
\end{definition}

This is as in \cite[Definition 4.1.4]{Adelic_curves_2}, but we also allow non-projective $X$. Also, we naturally extend this definition to $\QQ$-line bundles. Similarly, we extend the definition of a \textit{Green function family} of a ($\QQ$-)Cartier divisor on $X$ from \cite[Definition 4.2.1]{Adelic_curves_2}, to a non-projective $X$, word for word. A ($\QQ$-)Cartier divisor with a Green function family is called a \textit{metrized divisor}.

Chen and Moriwaki introduce \emph{adelic line bundles} as a subset of metrized line bundles respecting the global nature of the adelic curve, see \cite[Definition 4.1.9]{Adelic_curves_2}. There are two conditions. Firstly, the variation of metrics along $\omega$ has to be measurable. This condition does not occur for number fields since their set of places is discrete. Finally, one needs a condition requiring that the family of metrics has finite distance to arising from a global model. This condition is referred to as being dominated. In order to obtain an intersection theory, one needs to demand the metrics at each place to be \emph{integrable}, i.e.\ the difference of \emph{semipositive} metrics. We follow the definition of semipositivity from \cite[Section 2.3]{Adelic_curves_1}. Note that this only allows for semipositive metrics on semiample line bundles.

\begin{definition}~\label{definition_Weil_metric_copy}
    Consider $\PP^n = \PP_K^n$ with coordinates $x_0, \dots, x_n$. We equip the anti-tautological line bundle $\cO(1)$ with two families of metrics $\varphi = (|\cdot|_{\phi_\omega})_{\omega \in \Omega}, \psi = (|\cdot|_{\psi_\omega})_{\omega \in \Omega}$ defined in the following way. For a section $s \in H^0(\PP^n, \cO(1))$ identified with a linear form $A$ we have
    \[ |s(z)|_{\varphi_\omega} := \frac{|A(z)|_{\omega}}{\max(|z_0|_{\omega}, \dots, |z_n|_{\omega})},\]
    for $z \in \PP_{\omega}^{n, \an}$ and any $\omega \in \Omega$.
    The metric $\psi$ is defined in the same way for non-Archimedean $\omega$, but for archimedean $\omega$ we set
    \[ |s(z)|_{\psi_\omega} := \frac{|A(z)|_{\omega}}{\sqrt{\sum_{i=0}^n |z_i|_{\omega}^2}}, \]
    for all $z \in \PP_{\omega}^{n,\an}$. We call $\varphi, \psi$ the Weil and the Fubini-Study metric respectively and use the notation $\ov{\cO(1)} = (\cO(1), \varphi), \ov{\cO(1)}^{\FS} = (\cO(1), \psi)$. We use the same notation for pullbacks of these adelic line bundles on $\PP_S^n$ for any finite type $K$-scheme $S$. Both the Weil and the Fubini-Study metric define semipositive adelic line bundles.
\end{definition}

For a place $\omega \in \Omega$ and Green's functions $\phi,\psi$ for a divisor $D$ we denote by $d_\omega(\phi,\psi)$ the sup-norm of $\phi-\psi$ and call it the local distance of $\phi$ and $\psi$. For different metrics on a divisor over an adelic curve we denote by $d(\phi,\psi)$ the global distance of $\phi$ and $\psi$. It is defined as the upper integral $\int^+d_\omega(\phi_\omega,\psi_\omega)\nu (d\omega)$ over the local distances (see \cite[Definition A.4.1]{Adelic_curves_1}).

From the point of view of globally valued fields the Weil metric is the fundamental object. We need to relate it to the Fubini-Study height to invoke calculations from that setting.

\begin{lemma}~\label{lemma_Veronese_pullbacks_converge_to_Weil}
    Let $\alpha_n : \PP^r \to \PP^s$ be the $n$-th Veronese map. Consider two metrics on the line bundle $\cO(1)$ on $\PP^r$, namely the Weil metric $\varphi$ and the metric $\sqrt[n]{\alpha_n^* \psi}$ which is the $n$-th root of the pullback of the Fubini-Study metric from $\cO(1)$ on $\PP^s$. Then for non-Archimedean places the two metrics are the same and for archimedean places $\sigma \in \Omega$ we have
    \[ d_{\sigma}(\varphi, \sqrt[n]{\alpha_n^* \psi}) \leq \frac{r \log n}{n}, \]
    for $n \geq r+1$.
\end{lemma}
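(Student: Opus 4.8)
\emph{Proof sketch.} The plan is to compute both metrics explicitly in homogeneous coordinates and compare them directly; the whole statement then reduces to an elementary estimate on a multinomial coefficient. Write the Veronese map as $\alpha_n\colon [z_0:\dots:z_r]\mapsto (z^{\mathbf a})_{|\mathbf a|=n}$, where $\mathbf a$ runs over the $s+1=\binom{n+r}{r}$ multi-indices of weight $n$ and $z^{\mathbf a}=z_0^{a_0}\cdots z_r^{a_r}$, so that $\alpha_n^*\cO(1)=\cO(n)$ and the $\mathbf a$-th linear coordinate $y_{\mathbf a}$ on $\PP^s$ pulls back to the monomial section $x^{\mathbf a}$ of $\cO(n)$. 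Since the ratio of two metrics on $\cO(1)$ is a continuous function on $\PP^{r,\an}_{\omega}$ that does not depend on the auxiliary section used to evaluate it, it suffices to test everything on $s=x_0$ and on its $n$-th power $x_0^n=y_{(n,0,\dots,0)}$. By definition $|x_0(z)|_\varphi=|z_0|_\omega/\max_i|z_i|_\omega$, whereas $|x_0(z)|_{\sqrt[n]{\alpha_n^*\psi}}=\bigl|y_{(n,0,\dots,0)}(\alpha_n(z))\bigr|_{\psi}^{1/n}$, by the definition of the $n$-th root metric.

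First I would treat the non-Archimedean places. There $\psi$ is the max-metric (Definition~\ref{definition_Weil_metric_copy}), so $|y_{(n,0,\dots,0)}(\alpha_n(z))|_\psi=|z_0|_\omega^n/\max_{|\mathbf a|=n}|z^{\mathbf a}|_\omega$, and multiplicativity of the absolute value gives $\max_{|\mathbf a|=n}|z^{\mathbf a}|_\omega=(\max_i|z_i|_\omega)^n$, the maximum being attained at $x_j^n$ for any $j$ realising $\max_i|z_i|_\omega$. Hence $|x_0(z)|_{\sqrt[n]{\alpha_n^*\psi}}=|z_0|_\omega/\max_i|z_i|_\omega=|x_0(z)|_\varphi$, so the two metrics coincide, which is the first assertion.

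For an archimedean place $\sigma$ the Fubini--Study metric gives $|y_{(n,0,\dots,0)}(\alpha_n(z))|_\psi=|z_0|_\sigma^n\bigl(\sum_{|\mathbf a|=n}|z^{\mathbf a}|_\sigma^2\bigr)^{-1/2}$, so the logarithm of the ratio of the Weil metric to $\sqrt[n]{\alpha_n^*\psi}$ equals $\tfrac1{2n}\log\bigl(\sum_{|\mathbf a|=n}|z^{\mathbf a}|_\sigma^2\big/(\max_i|z_i|_\sigma)^{2n}\bigr)$; being scale invariant, we bound it by normalising $\max_i|z_i|_\sigma=1$. Since then $|z_i|_\sigma\le 1$ for all $i$ while $|z_j|_\sigma=1$ for some $j$, the sum $\sum_{|\mathbf a|=n}|z^{\mathbf a}|_\sigma^2$ lies in $[1,\binom{n+r}{r}]$, whence $d_\sigma(\varphi,\sqrt[n]{\alpha_n^*\psi})\le\tfrac1{2n}\log\binom{n+r}{r}$. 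It remains to verify the elementary inequality $\binom{n+r}{r}\le n^{2r}$ for $n\ge r+1$, which follows from $\binom{n+r}{r}=\prod_{i=1}^r\frac{n+i}{i}\le\frac{(2n)^r}{r!}\le n^{2r}$ (using $n+i\le 2n$ for $1\le i\le r\le n$, and $2^r\le r!\,n^r$ since $n\ge 2$); substituting gives the claimed bound $\tfrac{r\log n}{n}$. The argument is a direct computation with no serious obstacle; if anything, the only mildly fiddly point is this closing numerical estimate, and the degenerate case $r=0$ is automatically covered since then $\binom{n}{0}=1$. \qed
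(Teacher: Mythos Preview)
Your proof is correct and follows essentially the same approach as the paper: both compute the two metrics on the section $x_0$, reduce the archimedean bound to $\tfrac{1}{2n}\log\binom{n+r}{r}$, and finish with an elementary estimate on the binomial coefficient. The only minor differences are that you spell out the non-Archimedean case (the paper simply declares it and skips to the archimedean computation) and that your closing inequality is $\binom{n+r}{r}\le n^{2r}$ whereas the paper uses $\binom{n+r}{r}\le (r+1)n^r$; both yield the same final bound $\tfrac{r\log n}{n}$.
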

\begin{proof}
    We only calculate the archimedean case. Consider the section $x_0$ of $\cO(1)$ on $\PP^r$. By definition we have
    \[ |x_0(z)|_{\varphi} = \frac{|z_0|_{\sigma}}{\max(|z_0|_{\sigma}, \dots, |z_n|_{\sigma})} \]
    and
    \[ |x_0(z)|_{\sqrt[n]{\alpha_n^* \psi}} = \sqrt[n]{\frac{|z_0|_{\sigma}^n}{\sqrt{\sum_{|I|=n} |z^I|_{\sigma}^2}}} = \frac{|z_0|_{\sigma}}{\sqrt[2n]{\sum_{|I|=n} |z^I|_{\sigma}^2}}. \]
    Hence we calculate
    \[ \bigg| -\log \frac{|x_0(z)|_{\varphi}}{|x_0(z)|_{\sqrt[n]{\alpha_n^* \psi}}} \bigg| =  \bigg| \log \frac{\max(|z_0|_{\sigma}, \dots, |z_n|_{\sigma})}{\sqrt[2n]{\sum_{|I|=n} |z^I|_{\sigma}^2}} \bigg|. \]
    This is bounded by 
    \[|\log \sqrt[2n]{s+1}| = \bigg| \log \sqrt[2n]{\binom{r+n}{n}} \bigg| \leq  \frac{1}{2n} |\log (r+1)n^r| \leq \frac{r \log n}{n}, \]
    where in the first inequality we use the fact that $\binom{r+n}{n} \leq (r+1)n^r$ and the second inequality holds for $n \geq r+1$.
\end{proof}

\begin{lemma}~\label{lemma_pullback_of_adelic_are_adelic_copy}
    Let $f:Q \to P$ be a morphism of projective schemes over $K$ and let $\ov{L}$ be an adelic line bundle on $P$. Then $f^*\ov{L}$ is an adelic line bundle on $Q$.

    If $\ov{L}$ is semipositive or integrable, so is $f^*\ov{L}$.
\end{lemma}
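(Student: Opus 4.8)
The plan is to check, one by one, the conditions that define an adelic line bundle — measurability of the metric family along $\omega \in \Omega$ and dominancy (finite distance to a model metric family) — for the pulled-back family $f^{*}\varphi := (f_{\omega}^{\an,*}\varphi_{\omega})_{\omega\in\Omega}$ on $f^{*}L$, and then to treat semipositivity and integrability afterwards by arguing at each place separately. Everything ultimately rests on the single elementary observation that pullback of metrics is distance non-increasing: for metrics $\varphi_{\omega},\psi_{\omega}$ on $L_{\omega}$, writing $\varphi_{\omega}-\psi_{\omega}$ for the continuous function on $P_{\omega}^{\an}$ by which the two metrics differ, one has $f_{\omega}^{*}\varphi_{\omega}-f_{\omega}^{*}\psi_{\omega} = (\varphi_{\omega}-\psi_{\omega})\circ f_{\omega}^{\an}$, so
\[
 d_{\omega}(f_{\omega}^{*}\varphi_{\omega},f_{\omega}^{*}\psi_{\omega}) \;=\; \sup_{y\in Q_{\omega}^{\an}} \bigl|(\varphi_{\omega}-\psi_{\omega})(f_{\omega}^{\an}(y))\bigr| \;\le\; \sup_{x\in P_{\omega}^{\an}} \bigl|(\varphi_{\omega}-\psi_{\omega})(x)\bigr| \;=\; d_{\omega}(\varphi_{\omega},\psi_{\omega}),
\]
because the image of $f_{\omega}^{\an}$ lies in $P_{\omega}^{\an}$.

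For dominancy: by assumption there is a model metric family $\varphi_{0}$ on $L$ coming from a projective model of $(P,L)$ with $d(\varphi,\varphi_{0}) = \int^{+} d_{\omega}(\varphi_{\omega},\varphi_{0,\omega})\,\nu(d\omega) < \infty$. Since $Q$ is projective over $K$, I would choose a projective embedding and spread out to produce a model of $(Q,f^{*}L)$ equipped with a morphism to the chosen model of $(P,L)$; its induced model metric family is exactly $f^{*}\varphi_{0}$. The displayed inequality then gives $d(f^{*}\varphi,f^{*}\varphi_{0}) \le d(\varphi,\varphi_{0}) < \infty$, so $f^{*}\varphi$ is dominated. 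Measurability of $\omega \mapsto f_{\omega}^{*}\varphi_{\omega}$ follows from the compatibility of Berkovich analytification with the base change $K \to K_{\omega}$ together with functoriality of the local distances, the point being that $\omega \mapsto d_{\omega}(f_{\omega}^{*}\varphi_{\omega}, f_{\omega}^{*}\varphi_{0,\omega})$ is bounded above by the measurable function $\omega \mapsto d_{\omega}(\varphi_{\omega},\varphi_{0,\omega})$ and inherits measurability from the measurable families $\varphi$ and $\varphi_{0}$; I would fill this in by appeal to the formalism of \cite{Adelic_curves_2}. This measurability bookkeeping — invisible in the number-field case — is the only genuinely delicate point and is where I expect to have to be careful. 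Combining the two conditions, $f^{*}\ov{L}$ is an adelic line bundle.

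For semipositivity, fix a place $\omega$; recall (\cite[Section 2.3]{Adelic_curves_1}) that $\varphi_{\omega}$ semipositive means $L_{\omega}$ is semiample and $\varphi_{\omega}$ is a uniform limit of semipositive model metrics (smooth semipositive metrics at an Archimedean place). Then $f^{*}L$ is again semiample, since a globally generated power of $L$ pulls back to a globally generated power of $f^{*}L$; and the pullback of a semipositive model metric is semipositive, because the corresponding model line bundle is nef relative to its base and nefness is preserved by the proper morphism of models extending $f_{\omega}$ (equivalently, at an Archimedean place the semipositive curvature current pulls back to a positive current). Since $d_{\omega}(f^{*}\varphi_{\omega}^{(k)}, f^{*}\varphi_{\omega}) \le d_{\omega}(\varphi_{\omega}^{(k)}, \varphi_{\omega}) \to 0$ along an approximating sequence, $f^{*}\varphi_{\omega}$ is a uniform limit of semipositive metrics, hence semipositive; together with the first part this shows $f^{*}\ov{L}$ is semipositive. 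Finally, if $\ov{L}$ is integrable, write $\ov{L} = \ov{L}_{1}\otimes\ov{L}_{2}^{\vee}$ with $\ov{L}_{1},\ov{L}_{2}$ semipositive adelic line bundles; then $f^{*}\ov{L} = f^{*}\ov{L}_{1}\otimes(f^{*}\ov{L}_{2})^{\vee}$ is a difference of semipositive adelic line bundles by the preceding paragraphs, so it is integrable. Apart from the measurability step, the whole argument is formal functoriality plus the distance-non-increasing property of pullback.
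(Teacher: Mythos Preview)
The paper's own proof is pure citation: it defers to \cite[Sections 2.8.3 and 2.9.5]{chenpositivity} for the first assertion and to \cite[Lemma 6.1.2]{chenpositivity} for semipositivity. You instead sketch the underlying argument directly. Your treatment of dominancy (via the distance-non-increasing inequality for pullback and spreading out to a model), of local semipositivity (pullback of nef model metrics, respectively of smooth semipositive metrics, remains semipositive, and uniform limits are preserved by the same distance inequality), and of integrability (formal from the decomposition $\ov{L}=\ov{L}_1\otimes\ov{L}_2^{\vee}$) is correct and is essentially the content of the cited results.

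The one step you rightly flag as delicate is measurability: your stated heuristic --- that the pulled-back local distance is bounded above by a measurable function and ``inherits measurability'' --- is not by itself a proof, since an upper bound by a measurable function does not force measurability. Filling this in requires exactly the bookkeeping carried out in \cite{chenpositivity}, which is why the paper simply cites it rather than reproducing it. In short, your proposal is a correct expansion of what the paper black-boxes, with the measurability verification honestly left as a pointer to the same source the paper uses.
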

\begin{proof}
    The fact that $f^*\ov{L}$ is an adelic line bundle is found in \cite[Section 2.8.3 and 2.9.5]{chenpositivity}. The semipositivity assertion follows from \cite[Lemma 6.1.2]{chenpositivity}.
\end{proof}

Fix a projective morphism $\pi:\cX \to S$ of finite-type $K$-schemes, where $S$ is not necessarily projective.

\begin{definition}~\label{definition:simple_line_bundles}
    We say that a metrized family $\ov{\cL}$ on $\cX$ is \textit{simple over $S$}, if there is a closed embedding $j:\cX \to \PP_S^n$ over $S$ (for some $n$), such that $\ov{\cL} = j^*\ov{\cO(1)}$. The elements of the $\QQ$-vector space of metrized $\QQ$-line bundles on $\cX$ generated by simple ones are called \textit{lattice line bundles on $\cX$ over $S$} and are denoted by $\LPicQ(\cX/S)$. The space of metrized $\QQ$-divisors coming from rational sections of such metrized $\QQ$-line bundles is denoted by $\LdivQ(\cX/S)$ and its elements are called \textit{lattice divisors on $\cX$ over $S$}. If $S$ is equal to $\Spec(K)$, we omit it in the notation. Moreover, if we want to emphasize the dependence on the GVF $K$, we use the notation $\LPicQ(\cX/S)_K$.
\end{definition}

\begin{remark}
    It follows from Lemma~\ref{lemma_pullback_of_adelic_are_adelic_copy} that simple metrized line bundles on $\cX$ over $S$ are semipositive on fibers. More precisely, if $K \subset \Field$ is a GVF extension, and $s \in S(\Field)$, then a simple metrized line bundle $\ov{\cL}$ over $S$ coming from an embedding $j:\cX \to \PP_S^n$ induces an adelic line bundle $\ov{\cL}_s = j_s^*\ov{\cO(1)}$ which is semipositive on $\cX_s$ with respect to the GVF $\Field$.
\end{remark}

We remark that for any GVF extension $\Field/K$, a finite type $\Field$-scheme $T$, and a morphism of $K$-schemes $T \to S$, there is a base-change map
    \[ \LPicQ(\cX/S)_K \to \LPicQ(\cX_T/T)_{\Field}. \]
In particular for $s \in S(\Field)$, there is a specialisation map
    \[ \LPicQ(\cX/S)_K \to \LPicQ(\cX_s)_\Field. \]
Let us describe these maps more precisely. Let $\ov{\cL} = (\cL,\varphi) \in \LPicQ(\cX/S)_K$, for $\varphi = (\varphi_{\omega})_{\omega \in M_{K}}$. This means that each $\varphi_{\omega}$ is a metric on $\cL_{\omega}$ over $\cX_{\omega}^{\an}$. To get a family of metrics $\psi = (\psi_v)_{v \in M_{\Field}}$ on the base-change $\cL_T$ of $\cL$ via the morphism $\cX_T \to \cX$ one proceeds as in \cite[Example 4.1.8]{Adelic_curves_2} (which works the same when $K'=\Field/K$ is not algebraic). Equivalently, one could take an embedding $j:\cX \to \PP_S^n$ that realises $\ov{\cL} = j^*\ov{\cO(1)}$ (or two embeddings such that it comes from the difference of pullbacks) and define $(\cL_T,\psi)$ via the pullback of $\ov{\cO(1)}$ through the map $j_T:\cX_T \to \PP_T^n$.

\subsection{Heights of resultants}~\label{subsection_heights_of_resultants}
Chen and Moriwaki have constructed an intersection product of integrable adelic Cartier divisors in \cite{Adelic_curves_2}. In this subsection we look closely at its definition which uses heights of certain resultants.

\begin{theorem}~\cite[Theorem B]{Adelic_curves_2}~\label{theorem_adelic_intersection_product}
    Let $\Pvariety$ be a projective scheme of pure dimension $d$ over a GVF $K$. Then, there is a multilinear \textit{adelic intersection product}
    \[ \LPic_{\QQ}(\Pvariety)^{d+1} \to \RR.\]
\end{theorem}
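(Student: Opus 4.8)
The plan is to reduce to the case where $K$ is represented by a proper adelic curve, in which case the statement is a special case of \cite[Theorem B]{Adelic_curves_2}, and then to check that this reduction is harmless. Two issues must be addressed. First, an arbitrary GVF need not be countable and so need not come from an adelic curve at all. Second, the class $\LPic_\QQ(\Pvariety)$ appearing here consists of the lattice line bundles of Definition~\ref{definition:simple_line_bundles}, i.e.\ differences of pullbacks of $\ov{\cO(1)}$ under closed embeddings; one must check that these fall within the class of integrable adelic line bundles for which the Chen--Moriwaki product is defined.

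Since $\LPic_\QQ(\Pvariety)$ is the $\QQ$-vector space spanned by simple metrized line bundles, it suffices to assign a value $\adeg(\ov{\cL}_0,\dots,\ov{\cL}_d\mid\Pvariety)$ to tuples of simple line bundles $\ov{\cL}_i=j_i^*\ov{\cO(1)}$, $j_i:\Pvariety\hookrightarrow\PP_K^{n_i}$, to check multilinearity on such tuples, and then to extend; the remaining work is to verify independence of all choices. Because $\Pvariety$ is of finite type over $K$ and the finitely many $j_i$ involve finitely much data, there is a finitely generated subfield $K_0\subset K$ over which $\Pvariety$ and all the $j_i$ descend. The height of $K$ restricts to a GVF structure on $K_0$ (the associated GVF functional on $\LdivQ(K)$ restricts to one on $\LdivQ(K_0)$, cf.\ \cite{basics_of_gvfs}), and $K_0$, being finitely generated, is countable and hence represented by a proper adelic curve. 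Over that adelic curve each $j_i^*\ov{\cO(1)}$ is an integrable, indeed semipositive, adelic line bundle: the Weil metric family on $\cO(1)$ is adelic and semipositive by Definition~\ref{definition_Weil_metric_copy}, and pullback along the projective morphism $j_i$ preserves both properties by Lemma~\ref{lemma_pullback_of_adelic_are_adelic_copy}. Thus \cite[Theorem B]{Adelic_curves_2} supplies a multilinear pairing on $(d+1)$-tuples of lattice line bundles on $\Pvariety_{K_0}$, and one defines $\adeg(\ov{\cL}_0,\dots,\ov{\cL}_d\mid\Pvariety)$ to be its value. As the title of this subsection suggests, this value is computed, by induction on $d$, from the height (in the sense of Definition~\ref{definition:GVF}) of the coefficients of a multihomogeneous resultant attached to the $j_i$, together with lower-dimensional correction terms; multilinearity of the pairing then reflects the multiplicativity of the resultant in each group of variables.

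It remains to see that the value does not depend on $K_0$: for a larger finitely generated GVF subextension $K_0\subset K_1\subset K$ the resultant coefficients still lie in $K_0$ and their height is computed by the same formula in $K_0$ and in $K_1$, since the height of $K_1$ restricts to that of $K_0$; independence of the presentation $\ov{\cL}_i=j_i^*\ov{\cO(1)}$ then follows from the corresponding invariance over adelic curves. I expect this compatibility, together with the first reduction, to be the main point: one must be confident that the Chen--Moriwaki intersection number --- a priori obtained by integrating local intersection numbers over the continuum of places of $K$ --- depends only on the countable sub-curve attached to $K_0$. This holds because for lattice line bundles the metric at \emph{every} place is given by one and the same formula, so the measurability and dominance conditions defining an adelic line bundle are automatic and the upper integral defining the pairing collapses to an ordinary integral over the adelic curve of $K_0$; making that collapse precise, and checking its invariance, is the real content of the argument.
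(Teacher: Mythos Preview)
Your proposal is correct and follows essentially the same approach as the paper: descend to a countable (finitely generated) subfield, represent it by a proper adelic curve, observe that simple line bundles are semipositive adelic line bundles (via Lemma~\ref{lemma_pullback_of_adelic_are_adelic_copy}), and invoke \cite[Theorem B]{Adelic_curves_2}. The paper is more laconic on the well-definedness step: it simply forward-references Corollary~\ref{corollary_intersection_product_only_depends_on_the_GVF_structure}, which is an immediate consequence of the resultant formula in Proposition~\ref{proposition_formula_for_adelic_intersection_as_heights_of_resultants}. Your sketch of this step is slightly off in its details---the paper's formula is a \emph{limit} $\lim_n n^{-(d+1)}\height(R_n)$ over Veronese embeddings, not an inductive computation with lower-dimensional correction terms---but the essential point, that the intersection number is expressed purely through the GVF height of explicit tuples over $K_0$ and hence depends only on the GVF structure, is exactly what the paper uses.
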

\begin{proof}
    Given finitely many elements of $\LPic_{\QQ}(\Pvariety)$ we can replace $K$ by a countable subfield over which the corresponding embeddings to projective spaces (and $\Pvariety$) are defined. Then we can represent the GVF $K$ by an adelic curve. Since lattice line bundles are integrable, an arithmetic intersection number is defined by \cite[Theorem B]{Adelic_curves_2}. We show that the product on $\LPicQ(\Field)$ only depends on the induced GVF structure on $K$ in Corollary~\ref{corollary_intersection_product_only_depends_on_the_GVF_structure}.
\end{proof}

We write the intersection number of lattice line bundles $\ov{L}_0,\dots,\ov{L}_d$ as $\ov{L}_0\cdots\ov{L}_d$. We observe that the adelic intersection product is determined by its values on tuples of simple line bundles. Let us fix a tuple of such simple adelic Cartier divisors on  a projective scheme $\Pvariety$ and analyse how to calculate their intersection.

Assume we are given closed embeddings $\xi_i:X \to \PP^{r_i} = \PP(V_i)$ for $i=0, \dots, d$, where $V_i$ is a $(r_i+1)$-dimensional vector space over $K$ with a distinguished basis. For a natural number $n$, denote by $\xi_i^{\otimes n} : X \to \PP^{r_i(n)}$ the composition of $\xi_i$ with the $n$-th Veronese map $\PP^{r_i} = \PP(V_i) \to \PP(S^{n} V_i) = \PP^{r_i(n)}$ and write $V_i(n) := S^{n} V_i$. Note that in this case we have
\[ \dim V_i(n) = r_i(n) + 1 = \binom{r_i+n}{n} = O(n^{r_i}). \]

For each $n$ we define the line bundle $L_i(n)$ to be the pullback $\xi_i^{\otimes n,*}\cO(1)$. We pull back the Weil metric and the Fubini-Study metric to obtain adelic line bundles $\ov{L_i(n)}$ and $\ov{L_i(n)}^{\FS}$ respectively. For $n=1$, we omit the $n$ in the notation. We note that there is a canonical isomorphism $L_i(n) \cong L_i^{\otimes n}$.

Let $\delta_i(n)$ be the intersection number
$L_0(n) \cdots L_{i-1}(n) \cdot L_{i+1}(n) \cdots L_d(n)$ and set $\delta_i = \delta_i(1)$. Since $L_i(n) \cong L_i^{\otimes n}$, we have $\delta_i(n) = n^d \cdot \delta_i$. Let
\[ W(n) :=  S^{\delta_0(n)}(V_0(n)^{\vee}) \otimes_K \ldots \otimes_K S^{\delta_d(n)}(V_d(n)^{\vee})\]
and note that using distinguished bases of $V_i$ for $i=0, \dots, d$ we can naturally interpret elements of $W(n)$ as polynomials of multi-degree $(\delta_0(n), \dots, \delta_d(n))$ on $V_0(n) \times \dots \times V_d(n)$. There is a unique (up to scaling) element $R_n \in W(n)$ such that it vanishes on $(v_0, \dots, v_d)$ if and only if the intersection $X \cap Z(v_0) \cap \dots \cap Z(v_d)$ is non-empty (as a scheme), where $Z(v_i)$ is the pullback to $X$ of the hyperplane in $\bbP(V_i(n))$ defined by the zero-set of the linear form $v_i$. We call it the resultant of $X$ with respect to embeddings $\xi_i^{\otimes n}$. It determines a unique element $R_n \in \bbP(W(n))$ whose height calculates the adelic intersection product in the following way. 
\begin{remark}~\label{remark_Chen_Moriwaki_formula_intersections_and_resultants}~\cite[Remark 4.2.13]{Adelic_curves_2}
        \[ \ov{L_0(n)}^{\FS} \cdot \ldots \cdot \ov{L_d(n)}^{\FS} = \int_{\Omega \setminus \Omega_{\infty}} \log \|R_n\|_{\omega} \nu(d\omega)\]
        \[+ \int_{\Omega_{\infty}} \nu(d\sigma) \int_{\bbS_{0}(n)_{\sigma} \times \dots \times \bbS_{d}(n)_{\sigma}} \log |(R_n)_{\sigma}(z_0, \dots z_d)| \eta_{\bbS_{0}(n)_{\sigma}}(dz_0) \otimes \dots \otimes \eta_{\bbS_{d}(n)_{\sigma}}(dz_d)\]
        \[+ \nu(\Omega_\infty) \frac{1}{2} \sum_{i=0}^{d} \delta_i(n) \sum_{l=1}^{r_i(n)} \frac{1}{l},\]
where we use the notation from the cited remark, but with an additional variable $n$. This means that $\bbS_{i}(n)_{\sigma}$ is the unit sphere in $V_i(n)_{\sigma}$ with the sphere measure $\eta_{\bbS_{i}(n)_{\sigma}}$. Moreover, for a non-Archimedean $\omega \in \Omega$, the norm $\|\cdot\|_{\omega}$ is the maximum of coefficients norm, with respect to the distinguished basis of $W(n)$, for example by \cite[Proposition A.2.2]{chenpositivity}. Later we write $\eta(dz)$ for $\eta_{\bbS_{0}(n)_{\sigma}}(dz_0) \otimes \dots \otimes \eta_{\bbS_{d}(n)_{\sigma}}(dz_d)$ and $z$ for the tuple $z_0, \dots, z_d$ (here $n, d$ and $\sigma$ are implicit).
\end{remark}

\begin{lemma}~\label{lemma_aproximating_intersection_of_tropical_by_Fubini_Study}
    The adelic intersection product satisfies
    \[ \lim_n \frac{1}{n^{d+1}} \ov{L_0(n)}^{\FS} \cdot \ldots \cdot \ov{L_d(n)}^{\FS} = \ov{L_0(n)} \cdot \ldots \cdot \ov{L_d(n)}. \]
\end{lemma}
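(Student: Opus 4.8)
The plan is to compare the two sides via Remark~\ref{remark_Chen_Moriwaki_formula_intersections_and_resultants} together with the estimate of Lemma~\ref{lemma_Veronese_pullbacks_converge_to_Weil}. First I would observe that $\overline{L_i(n)} = \sqrt[n]{\alpha_n^*\overline{\cO(1)}^{\FS}}$ differs from the $n$-th root of the pullback Fubini–Study metric $\overline{L_i(n)}^{\FS}$ only at archimedean places, where Lemma~\ref{lemma_Veronese_pullbacks_converge_to_Weil} gives $d_\sigma(\varphi,\sqrt[n]{\alpha_n^*\psi}) \le \frac{r_i\log n}{n}$ for $n \ge r_i+1$. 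Hence it suffices to control, in the intersection product, the effect of changing each $\overline{L_i(n)}$ by a metric at distance $O(\log n / n)$ at the archimedean places (and zero elsewhere). By multilinearity of the adelic intersection product, the difference $\overline{L_0(n)}\cdots\overline{L_d(n)} - \overline{L_0(n)}^{\,\FS'}\cdots\overline{L_d(n)}^{\,\FS'}$ (where $\FS'$ denotes the appropriately-normalised $n$-th root, so both sides are metrics on the same $L_i(n) \cong L_i^{\otimes n}$) telescopes into $d+1$ terms, each of which is an intersection number with one factor replaced by the difference of the two metrics on $L_i(n)$; I would bound each such term using the standard fact (from Chen–Moriwaki, \cite{Adelic_curves_2}) that an intersection number with a factor $(\mathcal O_X, \phi - \psi)$ where $\phi,\psi$ are semipositive is bounded by $d(\phi,\psi)$ times a product of degrees, i.e.\ by $O(\log n / n)\cdot \prod_j \delta_j(n) = O(\log n / n) \cdot O(n^{d+1} \cdot \prod_j \delta_j) \cdot n^{-d}$... wait, more carefully: the relevant degree product is $\prod_{j\ne i}\deg_{L_j(n)}$ on a $d$-dimensional cycle, which is $O(n^{d})$, times the archimedean mass $\nu(\Omega_\infty)$, giving each term size $O(n^{d}\log n / n) = O(n^{d-1}\log n)$.

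Next I would rewrite the left-hand side. Since $\overline{L_i(n)}^{\FS}$ in the statement is already the pullback of the Fubini–Study metric under the composed Veronese embedding (not its $n$-th root), I must account for the scaling: $\overline{L_i(n)}^{\FS}$ as a metric on $L_i^{\otimes n}$ is $n$ times (additively) the metric $\overline{L_i}^{\,\FS'}$ obtained by taking the $n$-th root. Thus, using multilinearity and homogeneity of the intersection product under tensor powers, $\overline{L_0(n)}^{\FS}\cdots\overline{L_d(n)}^{\FS} = n^{d+1}\, \overline{L_0}^{\,\FS'}\cdots\overline{L_d}^{\,\FS'}$ — but here the $\FS'$-metrics depend on $n$ through the Veronese map. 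After dividing by $n^{d+1}$, the left-hand side becomes the intersection number of the $n$-dependent metrics $\overline{L_i}^{\,\FS'}$, which by the estimate above converge to $\overline{L_i(n)}$ in the sense that their intersection numbers differ by $O(n^{d-1}\log n)\cdot n^{-d}\cdot n^{d+1}$... I need to be more careful with the bookkeeping of powers of $n$: the clean way is to keep everything as metrics on $L_i(n)$, write $\frac{1}{n^{d+1}}\overline{L_0(n)}^{\FS}\cdots\overline{L_d(n)}^{\FS}$, note that $\frac{1}{n^{d+1}}$ of an intersection of metrics on $L_i(n)$ equals the intersection of metrics on $L_i$ rescaled, and then compare term by term with $\overline{L_0(n)}\cdots\overline{L_d(n)}$, which is likewise the intersection of the Weil metrics on $L_i^{\otimes n}$ scaled by $n^{-(d+1)}$, hence equals $\overline{L_0}\cdots\overline{L_d}$ with the Weil metrics on the $L_i$ (independent of $n$), so the right-hand side is in fact constant in $n$.

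Putting it together: both quantities are, up to the uniform rescaling $n^{-(d+1)}$, intersection numbers of metrics on $L_0^{\otimes n},\dots,L_d^{\otimes n}$ whose pairwise distances are zero at non-archimedean places and $O(\log n/n)$ at archimedean places; multilinearity plus the degree bound (each of the $d+1$ telescoping terms contributes $O(1)\cdot\frac{\log n}{n}\cdot\prod_{j\ne i}\deg_{L_j(n)}(X) = O(n^{d}\log n/n)$ before rescaling, hence $O(n^{d-1}\log n)/n^{d+1} = O(\log n / n^{2}) \to 0$ after rescaling) shows the difference tends to $0$, giving the claimed limit. The main obstacle I anticipate is purely bookkeeping: carefully tracking the powers of $n$ coming from (i) the rescaling $\overline{L_i(n)} \cong \overline{L_i}^{\otimes n}$, (ii) the growth $\delta_i(n) = n^d \delta_i$ of the partial-degree exponents, and (iii) the growth $r_i(n) = O(n^{r_i})$ of the dimensions — and verifying that the archimedean-distance bound $O(\log n / n)$ from Lemma~\ref{lemma_Veronese_pullbacks_converge_to_Weil} indeed beats all of these after dividing by $n^{d+1}$. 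One should also double-check that the "distance times degrees" bound for intersection numbers is available in the adelic-curve setting in the form needed; if not, it can be extracted directly from the resultant formula in Remark~\ref{remark_Chen_Moriwaki_formula_intersections_and_resultants} by a Cauchy–Schwarz / sup-norm estimate on $\log|R_n|$, which is the kind of Mahler-measure bound deferred to Appendix~\ref{appendix:A}.
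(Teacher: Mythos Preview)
Your approach is essentially the same as the paper's: use the archimedean distance bound from Lemma~\ref{lemma_Veronese_pullbacks_converge_to_Weil} together with the fact that the adelic intersection product is continuous with respect to the global distance (the ``distance times degree'' bound). The paper compresses all of your analysis into two sentences: since $d\bigl(\tfrac{1}{n}\ov{L_i(n)}^{\FS},\ov{L_i}\bigr)\le \nu(\Omega_\infty)\,\tfrac{r_i\log n}{n}$, the metrics $\tfrac{1}{n}\ov{L_i(n)}^{\FS}$ converge to $\ov{L_i}$ in the global distance, and this immediately gives the limit (using multilinearity and the continuity of the product). Your telescoping spells out exactly this continuity, so there is no genuinely different idea.

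One bookkeeping slip to clean up: in your final paragraph you pair the distance $O(\log n/n)$, which is the distance between metrics on $L_i$ (after taking $n$-th roots), with the degrees $\prod_{j\ne i}\deg_{L_j(n)}(X)=O(n^d)$, which are degrees of the $L_j(n)=L_j^{\otimes n}$. These belong to two different normalisations. If you work on $L_i(n)$ the distance is $n\cdot O(\log n/n)=O(\log n)$ and the degree is $O(n^d)$, so each telescoping term is $O(n^d\log n)$ before rescaling and $O(\log n/n)$ after dividing by $n^{d+1}$; if instead you work on $L_i$ the distance is $O(\log n/n)$ and the degree is $O(1)$, again giving $O(\log n/n)$. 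Either way the limit is zero, but your stated $O(\log n/n^2)$ comes from mixing the two viewpoints. Once you fix this, there is no need to fall back on the resultant formula or Appendix~\ref{appendix:A}; the standard Chen--Moriwaki bound you cite is exactly what the paper uses.
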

\begin{proof}
    It suffices to show that the metrics on $\frac{1}{n} \ov{L_i(n)}^{\FS}$ converge with respect to the global distance to the metrics on $\ov{L_i}$. By Lemma~\ref{lemma_Veronese_pullbacks_converge_to_Weil} the global distance satisfies $d(\frac{1}{n} \ov{L_i(n)}^{\FS},\ov{L_i}) \leq \nu(\Omega_\infty) \cdot \frac{r_i\log n}{n}$.
\end{proof}

We use the above formula and the lemma to calculate $\ov{D}_0 \cdot \ldots \cdot \ov{D}_d$ through resultants. More precisely we show the following.
\begin{proposition}~\label{proposition_formula_for_adelic_intersection_as_heights_of_resultants}
    In the above context we have
    \[ \ov{L}_0 \cdot \ldots \cdot \ov{L}_d = \lim_n \frac{1}{n^{d+1}} \height(R_n), \]
    where we treat $R_n$'s as tuples, using the distinguished basis of $W(n)$.
\end{proposition}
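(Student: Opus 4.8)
The plan is to compare $\height(R_n)$ place by place with the Fubini--Study self-intersection number $\ov{L_0(n)}^{\FS}\cdots\ov{L_d(n)}^{\FS}$, for which Remark~\ref{remark_Chen_Moriwaki_formula_intersections_and_resultants} already gives a closed formula in terms of $R_n$, and then to pass to the limit via Lemma~\ref{lemma_aproximating_intersection_of_tropical_by_Fubini_Study}, which yields $\lim_n \frac{1}{n^{d+1}}\ov{L_0(n)}^{\FS}\cdots\ov{L_d(n)}^{\FS} = \ov{L}_0\cdots\ov{L}_d$. Viewing $R_n\neq 0$ as a point of $\bbP(W(n))(K)$ through the distinguished basis, the definition of the GVF height (equivalently, the adelic-curve formula for the height of a tuple) gives
\[ \height(R_n) = \int_{\Omega} \log\|R_n\|_{\omega}\,\nu(d\omega), \]
where $\|\cdot\|_\omega$ is the maximum of the $\omega$-absolute values of the coordinates of $R_n$; at a non-Archimedean $\omega$ this is exactly the max-of-coefficients norm appearing in Remark~\ref{remark_Chen_Moriwaki_formula_intersections_and_resultants}.

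Subtracting the two formulas, the non-Archimedean contributions cancel and we are left with
\[ \height(R_n) - \ov{L_0(n)}^{\FS}\cdots\ov{L_d(n)}^{\FS} = \int_{\Omega_\infty} A_n(\sigma)\,\nu(d\sigma)\;-\;\nu(\Omega_\infty)\,\frac12\sum_{i=0}^{d}\delta_i(n)\sum_{l=1}^{r_i(n)}\frac1l, \]
where $A_n(\sigma) = \log\|R_n\|_\sigma - \int_{\bbS_{0}(n)_{\sigma}\times\dots\times\bbS_{d}(n)_{\sigma}}\log|(R_n)_\sigma(z)|\,\eta(dz)$ measures the gap between the sup-norm of the coefficients of $R_n$ and its ``spherical Mahler measure'' at $\sigma$. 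It therefore suffices to prove that, after dividing by $n^{d+1}$, both terms on the right-hand side tend to $0$.

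For the correction term one uses $\delta_i(n) = n^d\delta_i$ together with $r_i(n)+1 = \binom{r_i+n}{n} = O(n^{r_i})$, so that $\sum_{l=1}^{r_i(n)}\frac1l = O(\log n)$ and the whole term is $O(n^d\log n) = o(n^{d+1})$ (here $\nu(\Omega_\infty) < \infty$ by properness of the adelic curve and the normalization of its Archimedean places). The remaining, and genuinely nontrivial, point is to bound $A_n(\sigma)$. One inequality is elementary: since each block coordinate $z_i$ is a unit vector, every monomial has $\sigma$-absolute value $\le 1$, whence $\int_{\prod\bbS_i}\log|(R_n)_\sigma| \le \log(\#\{\text{monomials of }R_n\}) + \log\|R_n\|_\sigma$, and $\log(\#\{\text{monomials}\}) = \sum_i\log\binom{\delta_i(n)+r_i(n)}{r_i(n)} = O(n^d\log n)$ (bound each binomial by the smaller of its two entries). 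The reverse inequality — a lower bound on the spherical Mahler measure in terms of the sup-norm of the coefficients, the analogue of Mahler's bound $|c_\alpha|\le\binom{\delta}{\alpha}M(P)$ — is exactly the estimate proved in Appendix~\ref{appendix:A}; applied to $R_n$ it gives $|A_n(\sigma)| = O\bigl(\sum_{i=0}^d\delta_i(n)\log(r_i(n)+1)\bigr) = O(n^d\log n)$, uniformly in the (normalized) Archimedean $\sigma$, so that $\int_{\Omega_\infty}|A_n(\sigma)|\,\nu(d\sigma) = o(n^{d+1})$ as well. Dividing by $n^{d+1}$ and combining with Lemma~\ref{lemma_aproximating_intersection_of_tropical_by_Fubini_Study} gives the claim. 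The main obstacle is precisely this reverse Mahler-type estimate, which is why it is isolated in the appendix.
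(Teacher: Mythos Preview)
Your proposal is correct and follows essentially the same route as the paper: reduce via Lemma~\ref{lemma_aproximating_intersection_of_tropical_by_Fubini_Study} to showing $|\ov{L_0(n)}^{\FS}\cdots\ov{L_d(n)}^{\FS}-\height(R_n)|=o(n^{d+1})$, cancel the non-Archimedean parts using Remark~\ref{remark_Chen_Moriwaki_formula_intersections_and_resultants}, bound the harmonic-sum correction term by $O(n^d\log n)$, and control the Archimedean gap $A_n(\sigma)$ using the spherical-Mahler estimate of Proposition~\ref{proposition:comparison_fsmahler_norm_multi}. The only cosmetic difference is that you supply one direction of the bound on $A_n(\sigma)$ by an elementary monomial count, whereas the paper simply invokes the two-sided inequality of Proposition~\ref{proposition:comparison_fsmahler_norm_multi} for both directions.
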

\begin{proof}
By Lemma~\ref{lemma_aproximating_intersection_of_tropical_by_Fubini_Study} we only need to show that
\[ |\ov{L_0(n)}^{\FS} \cdot \ldots \cdot \ov{L_d(n)}^{\FS} - \height(R_n)| = o(n^{d+1}). \]
First we express $\height(R_n)$ in a form of an integral
\[ \height(R_n) = \int_{\Omega} \log \|R_n\|_{\omega} \nu(d\omega) \]
\[ = \int_{\Omega \setminus \Omega_{\infty}} \log \|R_n\|_{\omega} \nu(d\omega) + \int_{\Omega_{\infty}} \log \|R_n\|_{\sigma} \nu(d\sigma), \]
where $\|\cdot\|_{\omega}, \|\cdot\|_{\sigma}$ denote the maximum of coefficients norms (for a non-Archimedean $\omega \in \Omega$ or archimedean $\sigma \in \Omega_{\infty}$), with respect to the distinguished basis of $W(n)$.
\begin{claim}
    We have
    \[ \nu(\Omega_\infty) \frac{1}{2} \sum_{i=0}^{d} \delta_i(n) \sum_{l=1}^{r_i(n)} \frac{1}{l} = O(n^{d} \log n). \]
    In particular, when divided by $n^{d+1}$ converges to zero, for $n \to \infty$.
\end{claim}
\begin{proof}
    This follows from the fact that $\delta_i(n) = n^d \delta_i$ and 
    \[ \sum_{l=1}^{r_i(n)} \frac{1}{l} = O(\log r_i(n)) = O(\log n^{r_i}) = O(\log n). \]
\end{proof}
By Remark~\ref{remark_Chen_Moriwaki_formula_intersections_and_resultants} we will be done if we show that
\[ \Big| \int_{\bbS_{0}(n)_{\sigma} \times \dots \times \bbS_{d}(n)_{\sigma}} \log |(R_n)_{\sigma}(z)| \eta(dz) - \log \|R_n\|_{\sigma} \Big| = o(n^{d+1}), \]
where the constant is independent of $\sigma \in \Omega_{\infty}$. But by Proposition~\ref{proposition:comparison_fsmahler_norm_multi} applied to the polynomial $(R_n)_{\sigma}$, we have
\[
    \Big| \int_{\bbS_{0}(n)_{\sigma} \times \dots \times \bbS_{d}(n)_{\sigma}} \log |(R_n)_{\sigma}(z)| \eta(dz) - \log \|R_n\|_{\sigma} \Big| \leqslant \sum_{i=0}^d \delta_i(n) \left(\log(r_i(n)+1) + \sum_{k=1}^{r_i(n)-1}\frac{1}{k}\right),
\]
where $\delta_i(n) = n^d\delta_i$ and $\log(r_i(n)+1) + \sum_{k=1}^{r_i(n)-1}\frac{1}{k} = O(\log(r_i(n))) = O(\log n)$ for all $i\leqslant d$, so
\[
    \sum\limits_{i=0}^d \delta_i(n) \left(\log(r_i(n)+1) + \sum_{k=1}^{r_i(n)-1}\frac{1}{k}\right) = O(n^d\log n) = o(n^{d+1}),
\]
where the given bound does not depend on $\sigma$.
\end{proof}

\begin{corollary}~\label{corollary_intersection_product_only_depends_on_the_GVF_structure}
    The intersection product on $\LPicQ(\Pvariety)$ only depends on the induced GVF structure on $K$.
\end{corollary}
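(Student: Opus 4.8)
The plan is to reduce the statement to tuples of \emph{simple} line bundles, where the adelic intersection number is computed by the resultant formula of Proposition~\ref{proposition_formula_for_adelic_intersection_as_heights_of_resultants}, and then to observe that the resultant is a purely algebro-geometric object whose height is, by definition, part of the GVF data of $K$. First, by multilinearity of the product (Theorem~\ref{theorem_adelic_intersection_product}) and the fact that $\LPicQ(\Pvariety)$ is spanned over $\QQ$ by simple line bundles, it suffices to show that for any $(d+1)$-tuple of simple line bundles $\ov{\cL}_0,\dots,\ov{\cL}_d$ on $\Pvariety$ the value $\ov{\cL}_0\cdots\ov{\cL}_d$ depends only on the height function $h$ on $K$. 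Fix closed embeddings $\xi_i\colon\Pvariety\hookrightarrow\PP^{r_i}$ with $\ov{\cL}_i=\xi_i^*\ov{\cO(1)}$; these, together with the distinguished coordinates, are the only data beyond $\Pvariety$ and its GVF structure.

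Next, recall that for each $n$ the resultant $R_n$ is the (unique up to a scalar in $K^\times$) element of $\bbP(W(n))$ cutting out the incidence locus of tuples $(v_0,\dots,v_d)$ for which $\Pvariety\cap Z(v_0)\cap\dots\cap Z(v_d)\neq\emptyset$. Its coordinates in the distinguished basis of $W(n)$ therefore lie in any field of definition of $\Pvariety$ and the $\xi_i$, and they do not involve any choice of adelic curve representing the GVF $K$. By Proposition~\ref{proposition_formula_for_adelic_intersection_as_heights_of_resultants},
\[ \ov{\cL}_0\cdots\ov{\cL}_d=\lim_{n\to\infty}\frac{1}{n^{d+1}}\,\height(R_n). \]
Since $\height$ on tuples factors through projective space (by the product formula in Definition~\ref{definition:GVF}), the scaling ambiguity in $R_n$ is irrelevant, and each $\height(R_n)$ --- hence the limit --- is determined solely by $h$.

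Finally, one should check that the reduction to a countable subfield in the definition of the product is itself harmless: if $K_0,K_1\subset K$ are two countable subfields over which the embeddings are defined, then over their compositum (still countable) the resultants $R_n$ have the same coordinates, and since a GVF structure restricts compatibly along field extensions, $\height(R_n)$ is unchanged when passing between $K_0$, $K_1$, their compositum, and $K$; likewise, two adelic curve structures inducing the same $h$ on a countable subfield yield the same $\height(R_n)$. This gives the claim. I expect no serious obstacle here: the substantive work has already been carried out in Proposition~\ref{proposition_formula_for_adelic_intersection_as_heights_of_resultants}, and the only points needing a little care are the well-definedness of $\height(R_n)$ despite the scaling ambiguity (handled by the product formula) and the compatibility of GVF structures under the field extensions appearing in the reduction.
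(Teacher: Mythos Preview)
Your proposal is correct and follows exactly the approach intended by the paper: the corollary is stated without proof immediately after Proposition~\ref{proposition_formula_for_adelic_intersection_as_heights_of_resultants} because it is meant to be an immediate consequence of the resultant formula, and you have simply made the implicit reasoning explicit (multilinear reduction to simple line bundles, then observing that $\height(R_n)$ is by definition a value of the GVF height function). The additional care you take with the scaling ambiguity and the countable-subfield reduction is appropriate and not in tension with anything in the paper.
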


\begin{remark}~\label{remark_uniformity_in_height_of_resultants_formula}
    By analysing precisely the proof of Proposition~\ref{proposition_formula_for_adelic_intersection_as_heights_of_resultants}, one can see that in fact it shows existence of an absolute constant $C$ such that
    \[ \Big| \ov{L}_0 \cdot \ldots \cdot \ov{L}_d - \frac{1}{n^{d+1}} \height(R_n) \Big| \leq C \cdot (1+\nu(\Omega_{\infty})) \cdot \max_i r_i\delta_i \cdot \frac{\log n}{n}. \]
    Note that the number $\nu(\Omega_{\infty})$ can be expressed as $\frac{\height(2)}{\log 2}$ with respect to the induced GVF structure on $K$.
\end{remark}

\subsection{\label{sec:definability}Definability of adelic intersection product}

In this section we prove Theorem \ref{thm:continuity_of_intersection_number} for lattice line bundles. Let $\pi:\cX \to S$ be a flat projective morphism with $d$-dimensional fibers. Suppose $S = \Spec A$ is an affine variety (so that $A$ is an integral domain).

Let $\ov{\cL}_0, \dots, \ov{\cL}_d \in \LPic(\cX/S)$ be simple over $S$ with embeddings $\alpha_i:\cX \to \PP^{k_i}_S$. For a field-valued point $s \in S(\Field)$ and any object $Q$ over $S$, the notation $Q(s)$ denotes its base change to $s$. Denote by $\delta_i$ the intersection number
\[ \deg(\cL_0(s) \cdot \ldots \cdot \cL_{i-1}(s) \cdot \cL_{i+1}(s) \cdot \ldots \cdot \cL_d(s)|\cX_s)\]
for any $s \in S(\Field)$ in any field extension $K \subset \Field$. It is independent of the choice of $s$ since the family $\pi:\cX \to S$ is flat. 

\begin{lemma}~\label{lemma:uniform_resultants}
    There is a family of polynomials $R_n$ with coefficients in $A$ defined up to scalar in $A^\times$, such that for all $s \in S(\Field)$ we have
    \[ R_n(s) = R_{n,s}, \]
    where $R_{n,s}$ is the resultant $R_n$ from Subsection~\ref{subsection_heights_of_resultants}, defined for the scheme $\cX_s$ equipped with the family of embeddings $(\alpha_i)_s:\cX_s \to \PP_{\kappa(s)}^{k_i}$ for $i=0, \dots, d$.
\end{lemma}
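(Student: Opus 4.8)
The plan is to carry out the resultant construction of Subsection~\ref{subsection_heights_of_resultants} relatively over the base $S = \Spec A$ rather than over a field, and then to observe that base change along $\Spec \kappa(s) \to S$ recovers the fibrewise resultant. First I would fix the embeddings $\alpha_i : \cX \to \PP^{k_i}_S$ and form, for each $n$, the relative Veronese compositions $\alpha_i^{\otimes n} : \cX \to \PP(S^n V_i) = \PP^{k_i(n)}_S$, where $V_i$ is the free $A$-module of rank $k_i+1$ with its distinguished basis (the pullback of $\cO(1)$ under $\alpha_i^{\otimes n}$ is $\cL_i^{\otimes n}$). Because $\pi$ is flat and projective of relative dimension $d$, the numbers $\delta_i$ are constant on $S$; this is exactly the algebro-geometric constancy-of-degree result (Corollary III.9.10 of \cite{Hartshorne_AG}) quoted in the introduction. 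The incidence variety
\[
    \cI_n = \{\, (x, v_0, \dots, v_d) : x \in \cX,\ v_i(\alpha_i^{\otimes n}(x)) = 0 \text{ for all } i \,\}
    \subset \cX \times_S \bigl(\textstyle\prod_{i=0}^d \PP(V_i(n)^\vee)\bigr)
\]
is closed, and its image under the projection $p$ away from the $\cX$-factor is, fibrewise over $S$, the classical resultant hypersurface. The key structural input is that $p$ is proper (it is projective over $S$), so $p(\cI_n)$ is a closed subscheme of $\prod_i \PP(V_i(n)^\vee)$ over $S$, and over each geometric fibre it is a hypersurface of multidegree $(\delta_0(n), \dots, \delta_d(n))$ by the theory recalled in Subsection~\ref{subsection_heights_of_resultants}.

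The heart of the argument is that this fibrewise hypersurface is cut out by a \emph{single} multihomogeneous polynomial with coefficients in $A$, well-defined up to a unit. Here I would use that $A$ is an integral domain (part of the hypothesis, $S$ affine and integral) together with flatness. Concretely: the degree-$(\delta_0(n),\dots,\delta_d(n))$ graded piece of the ideal sheaf of $p(\cI_n)$, pushed forward to $S$, is a coherent $A$-module which, after possibly shrinking $S$ (or working at the generic point and then spreading out), is locally free of rank one — it is generated by the resultant, which over the fraction field $\Frac(A)$ is unique up to scalar by the classical theory, and whose formation commutes with the flat base changes $\kappa(s)/A$ by cohomology-and-base-change for the flat family. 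Choosing a generator $R_n \in W(n) \otimes_K A$ (using the distinguished basis of $W(n) = \bigotimes_i S^{\delta_i(n)}(V_i(n)^\vee)$) gives a polynomial with coefficients in $A$, determined up to $A^\times$. One then checks that $R_n \otimes_A \kappa(s)$ is nonzero and vanishes precisely on the incidence locus of $\cX_s$, hence equals $R_{n,s}$ up to $\kappa(s)^\times$ by the uniqueness statement of Subsection~\ref{subsection_heights_of_resultants}; this is the assertion $R_n(s) = R_{n,s}$.

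The main obstacle I anticipate is the base-change step: ensuring that the formation of the resultant genuinely commutes with specialisation to every point $s \in S(\Field)$ (not merely generically), and that the generator can be taken globally over $A$ rather than only Zariski-locally. Flatness of $\pi$ is what prevents the degree $\delta_i$ from jumping and what makes the relevant pushforward behave well under base change; the integrality of $A$ (so $\Frac(A)$ is a field over which the classical resultant is available and unique up to scalar) together with the fact that a finitely generated torsion-free $A$-module generated by a single element after tensoring with $\Frac(A)$ is invertible, handles the ``single polynomial up to a unit'' claim — at worst after the harmless shrinking of $S$ that is implicit in the rest of Section~\ref{sec:definability}. Everything else is a matter of unwinding the definition of $R_{n,s}$ from Subsection~\ref{subsection_heights_of_resultants} and matching it with the relative incidence variety.
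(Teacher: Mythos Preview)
Your overall strategy --- relativise the resultant construction over $S=\Spec A$ and then base-change --- is also the paper's strategy, but the execution is genuinely different.

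The paper does not argue via the incidence variety and the scheme-theoretic image of its projection. Instead it follows the specific construction of the resultant in \cite[Section~1.6]{Adelic_curves_2} (a Cayley/Knudsen--Mumford style construction producing $R_n$ as a generator of an explicit invertible sheaf on $\check{\bbP}$), observes that Sections~1.5--1.6 up to Proposition~1.6.2 of that reference go through verbatim with the field replaced by $A$, and then isolates the one step that needs work: showing that the relevant pushforward cycle on $\check{\bbP}$ has class $c_1(q_i^*\cO_{E_i^\vee}(\delta_i))\cap[\check{\bbP}]$. This is done by a short Chern-class computation using flat base change (for the square $\cX\times_S\check{\bbP}\to\check{\bbP}\to S$) and a further flat base change to the generic point of $S$ to identify the degree as $\delta_i$. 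Flatness of $\pi$ enters exactly here, and the argument gives the result over all of $S$ at once.

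Your route is more elementary and self-contained, but the place you flag as the ``main obstacle'' is precisely where the content lies. The scheme-theoretic image $p(\cI_n)$ need not commute with arbitrary base change, and ``fibrewise a hypersurface of the right multidegree'' does not by itself give a global generator in $A$ without further input; your spreading-out sketch would at best yield $R_n$ over a dense open of $S$, which is admittedly enough for the application after constructible stratification, but is weaker than the lemma as stated. The paper's approach sidesteps this by never forming a scheme-theoretic image: the resultant is produced directly as a section of a line bundle whose multidegree is computed by the Chern-class calculation, and that calculation is what certifies both existence over $A$ and compatibility with every specialisation. If you want to make your approach rigorous without invoking \cite{Adelic_curves_2}, you would need to replace the image argument by a flatness/cohomology-and-base-change statement for the relevant graded piece of the ideal of $p(\cI_n)$, which is essentially reproving the cited proposition by other means.
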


\begin{proof}
    This is probably standard, but we could not find a reference so we sketch a proof here, based on the construction of resultants from \cite[Section 1.6]{Adelic_curves_2}. We use notation from loc.cit but with the base-field $k$ replaced by $A$, $X$ over $k$ replaced by $\cX$ over $S = \Spec(A)$, and $L_i$'s replaced by $\cL_i$'s.

    First note that the whole Section 1.5 and Section 1.6 up to Proposition 1.6.2 of \cite{Adelic_curves_2} go through word-for-word over $A$. It remains to prove the analogue of \cite[Proposition 1.6.2]{Adelic_curves_2} over $A$. This boils down to calculating the cycles
    \[  q_*(c_1(p^*\cL_{0}) \cdots c_1(p^*\cL_{i-1}) c_1(q^*q_i^*(\cO_{E_i^{\vee}}(1))) c_1(p^*\cL_{i+1}) \cdots c_1(p^*\cL_{d}) \cap [\cX \times_S \check{\bbP}])
    \]
    \[
    = c_1(q_i^*(\cO_{E_i^{\vee}}(1))) \cdot q_*(c_1(p^*\cL_{0}) \cdots c_1(p^*\cL_{i-1}) c_1(p^*\cL_{i+1}) \cdots c_1(p^*\cL_{d}) \cap [\cX \times_S \check{\bbP}])
    \]
    We look at the diagram
    \[\begin{tikzcd}
    	{\cX \times_S \check{\bbP}} & \cX \\
    	{\check{\bbP}} & {S}
    	\arrow["p"', from=1-1, to=1-2]
    	\arrow["q", from=1-1, to=2-1]
    	\arrow["r"', from=1-2, to=2-2]
    	\arrow["s", from=2-1, to=2-2]
    \end{tikzcd}\]
    and use flat base-change to get the equality
    \[ q_*(c_1(p^*\cL_{0}) \cdots c_1(p^*\cL_{i-1}) c_1(p^*\cL_{i+1}) \cdots c_1(p^*\cL_{d}) \cap [\cX \times_S \check{\bbP}]) \]
    \[ = s^*r_*(c_1(\cL_{0}) \cdots c_1(\cL_{i-1})c_1(\cL_{i+1}) \cdots c_1(\cL_{d}) \cap [\cX]). \]
    Let $\eta$ be the generic point of $S$. By the flat base-change for the localisation map $\eta \to S$, we get
    \[ r_*(c_1(\cL_{0}) \cdots c_1(\cL_{i-1})c_1(\cL_{i+1}) \cdots c_1(\cL_{d}) \cap [\cX]) 
    \]
    \[ = \deg(\cL_{0}(\eta) \cdots \cL_{i-1} (\eta) \cL_{i+1}(\eta) \cdots \cL_{d}(\eta)) [S] \]
    which is equal to $\delta_i \cdot [S]$. Hence the cycle in question is equal to 
    \[ c_1(q_i^*(\cO_{E_i^{\vee}}(1))) s^*(\delta_i \cdot [S]) = c_1(q_i^*(\cO_{E_i^{\vee}}(1))) \cap \delta_i [\check{\bbP}] = c_1(q_i^*(\cO_{E_i^{\vee}}(\delta_i))) \cap [\check{\bbP}],\]
    which finishes the proof as in \cite[Proposition 1.6.2]{Adelic_curves_2}.
    
\end{proof}

\begin{proposition}~\label{proposition_definability_for_simple_famielies}
    Let $\cX \to S$ be a flat projective morphism of finite type schemes over a GVF $K$ of relative dimension $d$. Let $\ov{\cL}_0, \dots, \ov{\cL}_d \in \LPic(\cX/S)$ be lattice line bundles on $X$ over $S$. Then, the map
    \begin{align*}
        S_{\GVF} &\to \bbR\\
        s&\mapsto\adeg(\ov{\cL}_0(s), \dots, \ov{\cL}_d(s)|\sX_s)
    \end{align*}
    is continuous.
\end{proposition}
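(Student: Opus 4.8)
The plan is to reduce the statement to the quantitative formula for the adelic intersection number in terms of heights of resultants, and then use the uniform family of resultants to exhibit the desired function as a uniform limit of continuous functions on $S_{\GVF}$.

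\medskip

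First I would reduce to the case where each $\ov{\cL}_i$ is simple over $S$. Since $\LPic_{\QQ}(\cX/S)$ is by definition the $\QQ$-span of simple metrized line bundles, and the adelic intersection product is multilinear (Theorem~\ref{theorem_adelic_intersection_product}), it suffices to treat tuples $(\ov{\cL}_0,\dots,\ov{\cL}_d)$ that are all simple; the general case follows by $\QQ$-multilinearity (and continuity is preserved under finite $\QQ$-linear combinations of continuous functions). I would also reduce to $S = \Spec A$ affine, since continuity is a local property on $S$ (for the constructible topology, hence for the GVF topology via the continuous projection $\pi:S_{\GVF}\to S$), and $S$ is covered by finitely many affine opens.

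\medskip

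Next, for $s \in S(\Field)$ with $K \subset \Field$ a GVF extension, Proposition~\ref{proposition_formula_for_adelic_intersection_as_heights_of_resultants} applied to the fibre $\cX_s$ with the embeddings $(\alpha_i)_s$ gives
\[
    \adeg(\ov{\cL}_0(s),\dots,\ov{\cL}_d(s)\mid \cX_s) = \lim_n \frac{1}{n^{d+1}} \height\bigl((R_{n})(s)\bigr),
\]
where by Lemma~\ref{lemma:uniform_resultants} the resultant $R_{n}(s) = R_{n,s}$ is the specialisation of a single polynomial $R_n$ with coefficients in $A$, defined up to a unit in $A^{\times}$. Writing $R_n = (a_{n,1},\dots,a_{n,N_n})$ for its coefficient tuple (with $a_{n,j}\in A = \cO_S(S)$), the function
\[
    s \mapsto \height\bigl((R_n)(s)\bigr) = h_s\bigl(a_{n,1},\dots,a_{n,N_n}\bigr)
\]
is, for each fixed $n$, continuous on $S_{\GVF}$ by the very definition of the topology on the GVF analytification (the second defining condition, applied to the tuple of regular functions $(a_{n,j})_j \in \cO_S(S)^{N_n}$ on the open $S$ itself); here the ambiguity of $R_n$ by a unit in $A^\times$ does not matter, since multiplying all coordinates of a tuple by a fixed $u\in F^\times$ leaves the projective height unchanged by the product formula. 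Thus $s \mapsto \tfrac{1}{n^{d+1}}\height((R_n)(s))$ is continuous for every $n$.

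\medskip

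Finally, to conclude that the pointwise limit is continuous, I would invoke the uniformity of the error term. By Remark~\ref{remark_uniformity_in_height_of_resultants_formula}, there is an absolute constant $C$ with
\[
    \Bigl| \adeg(\ov{\cL}_0(s),\dots,\ov{\cL}_d(s)\mid \cX_s) - \frac{1}{n^{d+1}}\height\bigl((R_n)(s)\bigr) \Bigr| \leq C\cdot(1+\nu(\Omega_{\infty,s}))\cdot \max_i k_i\delta_i \cdot \frac{\log n}{n},
\]
and, crucially, $\nu(\Omega_{\infty,s}) = \height_s(2)/\log 2$ depends only on the Archimedean error of the GVF $\Field$, not on $s$ within a fixed GVF. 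The point where I expect the only real subtlety is that $s$ ranges over $S_{\GVF}$, i.e. over points with possibly varying residue fields and GVF structures, so I must check that $\height_s(2)$ is bounded \emph{uniformly} over $S_{\GVF}$: but $\height_s(2)$ is exactly the value of the continuous function $s\mapsto h_s(2,1)$ on the locally compact Hausdorff space $S_{\GVF}$ — or, more simply, it equals the Archimedean error $e$ of the base GVF $K$, since every GVF structure in play extends that of $K$ — hence it is constant (or at worst locally bounded), and the bound above shows $\tfrac{1}{n^{d+1}}\height((R_n)(\cdot))$ converges uniformly on compact subsets of $S_{\GVF}$ to $s\mapsto\adeg(\ov{\cL}_0(s),\dots,\ov{\cL}_d(s)\mid\cX_s)$. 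A uniform limit of continuous functions is continuous, which proves the proposition.
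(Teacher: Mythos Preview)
Your proof is correct and follows essentially the same approach as the paper's: reduce to simple line bundles over an affine base, invoke the uniform family of resultants (Lemma~\ref{lemma:uniform_resultants}), observe that $s\mapsto\height(R_n(s))$ is continuous for each fixed $n$, and use the quantitative error bound of Remark~\ref{remark_uniformity_in_height_of_resultants_formula} to obtain uniform convergence. The paper phrases the final step as an explicit $3\varepsilon$ net argument rather than uniform convergence, but the content is identical; your observation that $\height_s(2)$ is constant equal to the Archimedean error of the base GVF $K$ is in fact exactly what the paper is tacitly using when it treats $(1+\nu(\Omega_\infty))$ as a fixed constant.
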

\begin{proof}
    By Remark~\ref{remark:GVF_analytification_as_qf_GVF_types} (continuity with respect to the constructible topology) we may without loss of generality assume that $S$ is an affine variety and the line bundles are simple over the base $S$.
    
    Fix a net $(s_i)_{i} \in S_{\GVF}$ and $s \in S_{\GVF}$ such that $s_i \to s$. Using the notation from Lemma~\ref{lemma:uniform_resultants}, put 
    \[ I_{n, i} = \frac{1}{n^{d+1}} \height(R_n(s_i)), \quad I_n = \frac{1}{n^{d+1}} \height(R_n(s)),\]
    \[ I_i = \lim_n \frac{1}{n^{d+1}} \height(R_n(s_i)), \quad I = \lim_n \frac{1}{n^{d+1}} \height(R_n(s)).\]

    We need to show that $\lim_i I_i = I$. Pick a positive number $\varepsilon$. First, note that there is a natural number $n$ such that
    \[ |I - I_n| < \varepsilon \]
    and
    \[ |I_{i} - I_{n,i}| < \varepsilon \]
    for all $i$. Indeed, this is possible because by Remark~\ref{remark_uniformity_in_height_of_resultants_formula} the above differences are bounded by $\frac{\log n}{n}$ times an absolute multiple of $(1+\nu(\Omega_\infty)) \max_i k_i \delta_i$. Next, note that for $i$ big enough we have
    \[ |I_{n,i} - I_n| < \varepsilon \]
    because for a fixed $n$, $\height(R_n(y))$ is a continuous function on $S_{\GVF}$. Hence together we get
    \[ |I - I_{i}| < 3 \varepsilon, \]
    which finishes the proof as $\varepsilon>0$ was arbitrary.
\end{proof}

\subsection{Integrable divisors over globally valued fields}

For applications, it is useful to consider not only lattice line bundles, but also to allow certain limit metrics. We will define global line bundles and globally semipositive line bundles over a GVF in the spirit of Zhang line bundles. Let $\cX \to S$ be a projective morphism of finite type schemes over a GVF $K$.

\begin{definition}
    A \emph{lattice line bundle} $\ov{\cL}$ on $\cX$ over $S$ is called \textit{semipositive} if for every $s\in S_{\GVF}$ the family of metrics $\varphi|_{\cX_s}$ consists of semipositive metrics over almost all places $\omega \in M_{\kappa(s)}$.
\end{definition}

\begin{definition}
    Let $\cL$ be a line bundle on $\cX$ over $S$. For every compact set $C \subset S_{\GVF}$, we define a pseudometric $d_C$ on the space of metrics on $\cL$. Let $s$ be a section of $\cL$ and let $\phi$ and $\psi$ be two families of metrics on $\cL$. Then, we define
    \[
        d_C(\phi,\psi) =  \sup_{z\in C}\int^+_{\Omega_{\kappa(z)}} \sup_{x \in \cX_{s,\omega}^{\an}} |\log |s|_\phi - \log |s|_\psi| \nu(d\omega),
    \]
    where $\int^+$ is the upper integral functional defined in \cite[Definition A.4.1]{Adelic_curves_1}.
\end{definition}

Note that the set of (semipositive) adelic divisors is closed under this norm. The norm allows us to define notions of global and integrable divisors on $\cX$ over $S$.

\begin{definition}~\label{definition:global_line_bundles}
    A \emph{global line bundle} $\ov{\sL}$ on $\sX$ over $S$ is defined to be a line bundle $\sL$ with a metric family $\phi$ such that there is a sequence of lattice line bundles $\ov{\sL}_k=(\sL,\phi_k)$ such that
    \[
        \lim_{k \to\infty} d_C(\phi_k,\phi) = 0
    \]
    for every compact $C \subset S_{\GVF}$.

    It is called globally semipositive if the metric families $\phi_k$ can be chosen semipositive. A global line bundle $\ov{\sL}$ is called globally integrable if there are globally semipositive line bundles $\ov{\sL}_+$ and $\ov{\sL}_-$ and an isometry $\ov{\sL} \cong \ov{\sL}_+ \otimes \ov{\sL}^\vee_-$.

    We denote the isometry classes of global $\bbQ$-line bundles by $\aPicQ(\cX/S)$. The subgroup of integrable line bundles is denoted by $\intPicQ(\cX/S)$. If $S = \Spec K$, we often omit it in the notation. We furthermore denote the distance between two metric families by $d$.
\end{definition}

\begin{proposition}~\label{proposition:intersection_product_over_a_GVF}
    Let $\Pvariety$ be a projective scheme of pure dimension $d$ over a countable GVF $K$. The intersection product on lattice divisors extends to a pairing
    \[
        \intPicQ(\Pvariety)^{d+1} \to \bbR.
    \]
\end{proposition}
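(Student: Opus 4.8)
The plan is to extend the intersection product from lattice divisors (where it is provided by Theorem~\ref{theorem_adelic_intersection_product}) to globally integrable divisors by a density/continuity argument. The key input will be the uniform bound of Remark~\ref{remark_uniformity_in_height_of_resultants_formula}, which controls the error in approximating intersection numbers by heights of resultants, together with a Lipschitz-type estimate for the intersection product with respect to the global distance $d$.

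First I would establish that the adelic intersection product $\LPic_\QQ(\Pvariety)^{d+1}\to\RR$ is continuous with respect to the global distance $d=d_C$ with $C = \Pvariety_{\GVF}$ (or more precisely, uniformly continuous on bounded sets). The standard way to see this is the telescoping identity: for semipositive $\ov{L}_i$ and $\ov{L}'_0$ another metric on $L_0$,
\[
    \ov{L}_0\cdot\ov{L}_1\cdots\ov{L}_d - \ov{L}'_0\cdot\ov{L}_1\cdots\ov{L}_d = (\ov{L}_0 - \ov{L}'_0)\cdot\ov{L}_1\cdots\ov{L}_d,
\]
where the difference $\ov{L}_0 - \ov{L}'_0$ is a metrized bundle on the trivial line bundle, i.e. given by a global function $g = \log|s|_{\phi_0} - \log|s|_{\phi'_0}$ (independent of the section $s$); and the intersection number of such a "vertical" class against $d$ semipositive (hence nef) bundles is bounded by $\|g\|\cdot\prod_i \deg(L_i)$ in absolute value, where $\|g\| = \int^+_\Omega \sup_{x}|g|\,\nu(d\omega)$ is exactly the quantity appearing in $d_C$. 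This is the adelic-curves analogue of the fact that $|\int \phi\, c_1(\ov{L}_1)\wedge\cdots\wedge c_1(\ov{L}_d)|\le \|\phi\|_\infty \cdot L_1\cdots L_d$; it follows from the corresponding local statements in \cite{Adelic_curves_1, Adelic_curves_2}. By multilinearity, changing all $d+1$ metrics one at a time, one gets that the intersection number is Lipschitz in each argument with respect to $d$, with Lipschitz constant a product of degrees of the (fixed) underlying line bundles.

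With this estimate in hand, the extension is routine. Given globally integrable $\ov{\sL}_0,\dots,\ov{\sL}_d\in\intPicQ(\Pvariety)$, write each as a difference of globally semipositive bundles; by multilinearity it suffices to define the product for globally semipositive ones. For each $i$ pick a sequence of semipositive lattice line bundles $\ov{\sL}_{i,k}=(\sL_i,\phi_{i,k})$ with $d(\phi_{i,k},\phi_i)\to 0$. Define
\[
    \ov{\sL}_0\cdots\ov{\sL}_d := \lim_{k\to\infty}\ov{\sL}_{0,k}\cdots\ov{\sL}_{d,k}.
\]
The sequence on the right is Cauchy by the Lipschitz estimate (the degrees $\deg(\sL_i)$ are fixed along the approximating sequences since the underlying line bundles are fixed), so the limit exists; and it is independent of the chosen approximating sequences, again by the Lipschitz estimate applied to an interleaved sequence. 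Multilinearity and symmetry pass to the limit. One should also check the definition is compatible with the lattice case, which is immediate by taking constant sequences. Finally, reduce from a general countable $K$ to the case handled by \cite{Adelic_curves_2} exactly as in the proof of Theorem~\ref{theorem_adelic_intersection_product}: finitely many global line bundles are approximated by lattice bundles defined over a countable subfield, and by Corollary~\ref{corollary_intersection_product_only_depends_on_the_GVF_structure} the answer depends only on the induced GVF structure.

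The main obstacle is the first step: pinning down the precise Lipschitz (or at least uniform continuity on bounded sets) estimate for the Chen--Moriwaki intersection product with respect to the global distance of metrics. The resultant formula of Proposition~\ref{proposition_formula_for_adelic_intersection_as_heights_of_resultants} together with Remark~\ref{remark_uniformity_in_height_of_resultants_formula} gives one route to it — comparing the heights of resultants for two nearby metric families reduces to comparing heights of the \emph{same} tuple $R_n$ computed with the Fubini--Study versus nearby metrics, and the uniformity in $\nu(\Omega_\infty)$ and in the degrees is exactly what makes the $k\to\infty$ limit interchange with the $n\to\infty$ limit legitimate. Alternatively one invokes the local continuity statements from \cite[Section 4.3]{Adelic_curves_2} directly. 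Either way, once the estimate is uniform in the place $\omega$ and linear in $\|g\|$, the rest is a standard completion argument.
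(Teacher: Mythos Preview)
Your proposal is correct and follows essentially the same route as the paper: reduce by multilinearity to the semipositive case, prove a Lipschitz estimate for the intersection product with respect to the global distance $d$ by bounding the contribution of a vertical class $(\cO,\phi)$ against semipositive $\ov{L}_1,\dots,\ov{L}_d$, and then extend by completion. The paper carries out exactly your second suggested route (the direct local estimate, not the resultant detour), writing the intersection number as $\int_\Omega\int_{\Pvariety_\omega^{\an}}\log|1|_{\phi,\omega}\,c_1(\ov{L}_{1,\omega})\cdots c_1(\ov{L}_{d,\omega})\,\nu(d\omega)$ and bounding by $d(\phi,0)\cdot\deg(L_1\cdots L_d)$. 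One small imprecision: your bound should involve the geometric intersection number $L_1\cdots L_d$, not a product of individual degrees $\prod_i\deg(L_i)$.
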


\begin{proof}
    By linearity, it suffices to construct the pairing for globally semipositive divisors. It suffices to show that on the set of semipositive lattice divisors the intersection product is continuous with respect to $d$.
    
    Let $\ov{L}=(\sO,\phi)$ be a lattice line bundle with trivial underlying line bundle and $d(\phi,0) = C$ and let $\ov{L}_1,\dots,\ov{L}_d \in \LPicQ^+(\Pvariety)$ be semipositive lattice line bundles. Then,
    \[
        |\ov{L} \cdot \ov{L}_1\cdots \ov{L}_d| \leq C \deg(L_1\cdots L_d).
    \]
    This can be read off from the interpretation of the intersection number as the integral over local heights
    \begin{align*}
        |\ov{L} \cdot \ov{L}_1\cdots \ov{L}_d| &= |\int_\Omega \int_{\Pvariety_{\omega}^{\an}} \log|1|_{\phi,\omega} \ c_1(\ov{L}_{1,\omega})\cdots c_1(\ov{L}_{d, \omega}) \nu(d\omega)|\\ &\leq \int^+_\Omega \sup_{x \in \Pvariety_{\omega}^{\an}}|\log|1|_{\phi,\omega}(x)|\deg(\ov{L}_1\cdots \ov{L}_d)\nu(d\omega)\\ &\leq C \deg(L_1\cdots L_d).
    \end{align*}
\end{proof}

We are finally in the position to prove Theorem \ref{thm:continuity_of_intersection_number}. We restate it for convenience.

\begin{theorem}\label{thm:continuity_approximation}
    Let $\cX \to S$ be a flat projective morphism of finite type schemes over a GVF $K$ of relative dimension $d$. Let $\ov{\sL}_0,\dots,\ov{\sL}_d\in \intPicQ(\cX/S)$ be globally integrable divisors on $\cX$ over $S$. Then, the map
    \begin{align*}
        S_{\GVF} &\to \bbR\\
        s&\mapsto\adeg(\ov{\sL}_0,\dots,\ov{\sL}_d|\sX_s)
    \end{align*}
    is continuous.
\end{theorem}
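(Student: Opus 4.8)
The plan is to reduce the general case of globally integrable line bundles to the case of lattice line bundles, which is already handled by Proposition~\ref{proposition_definability_for_simple_famielies}. The strategy is a standard $3\varepsilon$-argument: approximate each $\ov{\sL}_j$ uniformly on compacta by lattice line bundles, use continuity of the lattice intersection number, and control the error introduced by the approximation via a multilinear estimate analogous to the one in the proof of Proposition~\ref{proposition:intersection_product_over_a_GVF}.

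More precisely, first I would reduce to the case where the $\ov{\sL}_j$ are globally semipositive, by multilinearity of the fibrewise intersection product (writing each integrable bundle as a difference of semipositive ones). Next, fix a convergent net $s_i \to s$ in $S_{\GVF}$; since $\{s_i\} \cup \{s\}$ need not be compact I would instead work on a fixed compact neighbourhood $C$ of $s$ in $S_{\GVF}$ (using local compactness of $S_{\GVF}$ from Remark~\ref{remark:GVF_analytification_as_qf_GVF_types}), so that eventually $s_i \in C$. By definition of a global line bundle, choose semipositive lattice line bundles $\ov{\sL}_j^{(k)} = (\sL_j, \phi_j^{(k)})$ with $d_C(\phi_j^{(k)}, \phi_j) \to 0$. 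The key quantitative input is a uniform bound: for $z \in C$, the difference
\[
    \bigl| \adeg(\ov{\sL}_0,\dots,\ov{\sL}_d|\sX_z) - \adeg(\ov{\sL}_0^{(k)},\dots,\ov{\sL}_d^{(k)}|\sX_z) \bigr|
\]
is bounded by a sum of terms of the shape $d_C(\phi_j^{(k)},\phi_j)$ times fibrewise geometric degrees $\deg(L_{0}\cdots \widehat{L_j}\cdots L_d | \sX_z)$, which are constant in $z$ by flatness. This is obtained by a telescoping/multilinearity argument exactly as in Proposition~\ref{proposition:intersection_product_over_a_GVF}, applied fibrewise over each $z \in C$: replace the metrics one bundle at a time, and at each step the increment is an intersection number involving a line bundle with trivial underlying bundle and metric of $d_C$-distance at most $d_C(\phi_j^{(k)},\phi_j)$ from the trivial one, the other factors being semipositive lattice line bundles (for the ones already replaced) or the original global semipositive bundles (for the rest), whose fibrewise intersection with the error is controlled by the local-height integral estimate. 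Crucially this bound is independent of $z \in C$, hence uniform in $i$.

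With this in hand the $3\varepsilon$-argument runs: given $\varepsilon > 0$, pick $k$ so that the approximation error above is $< \varepsilon$ uniformly on $C$ (hence at $s$ and at all $s_i$ eventually); then by Proposition~\ref{proposition_definability_for_simple_famielies} the map $z \mapsto \adeg(\ov{\sL}_0^{(k)},\dots,\ov{\sL}_d^{(k)}|\sX_z)$ is continuous, so $|\adeg(\ov{\sL}_\bullet^{(k)}|\sX_{s_i}) - \adeg(\ov{\sL}_\bullet^{(k)}|\sX_s)| < \varepsilon$ for $i$ large; combining the three inequalities gives $|\adeg(\ov{\sL}_\bullet|\sX_{s_i}) - \adeg(\ov{\sL}_\bullet|\sX_s)| < 3\varepsilon$. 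Since the net and $\varepsilon$ were arbitrary, continuity follows. One small point to handle at the start: as in Proposition~\ref{proposition_definability_for_simple_famielies}, reduce to $K$ countable and to $S$ affine by the continuity with respect to the constructible topology built into the definition of $S_{\GVF}$, so that the adelic-curve machinery of \cite{Adelic_curves_2} and the notion of $d_C$ are available.

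The main obstacle I anticipate is establishing the uniform-in-$z$ multilinear approximation estimate with the degrees of the fibres appearing as the correct constants — i.e.\ making precise that the fibrewise geometric degrees $\deg(\sL_{j_1}(z)\cdots\sL_{j_d}(z)|\sX_z)$ are the right bounding quantities and that they are genuinely constant along the net (this is flatness, but one must ensure the relevant auxiliary line bundles are pulled back from $S$-ample data so the degrees make sense uniformly). Everything else is either already proven in the excerpt or a routine $\varepsilon$-chase.
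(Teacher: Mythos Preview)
Your proposal is correct and follows essentially the same approach as the paper: reduce to semipositive by linearity, approximate on a compact $C\subset S_{\GVF}$ by semipositive lattice line bundles using $d_C$, invoke Proposition~\ref{proposition_definability_for_simple_famielies} for the lattice case, and conclude via local compactness. The paper phrases the endgame as ``uniform limit of continuous functions is continuous'' rather than an explicit $3\varepsilon$ net argument, and simply asserts the uniform convergence; your telescoping estimate with the flatness-constant fibrewise degrees is exactly what justifies that assertion (and is the content of the estimate in Proposition~\ref{proposition:intersection_product_over_a_GVF} applied fibrewise), so you have in fact filled in the step the paper leaves implicit.
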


\begin{proof}
    We assume by linearity that $\ov{\sL}_0,\dots,\ov{\sL}_d$ are all semipositive. Let $C\subset S_{\GVF}$ be a compact subset. For $k\in \bbN$, let $\ov{\sL}^k_0,\dots,\ov{\sL}^k_d$ be semipositive lattice line bundles such that $d_C(\ov{\sL}^k_i,\ov{\sL}_i)$ converges to zero. Then, the functions $\adeg(\ov{\sL}^k_0,\dots,\ov{\sL}^k_d|\sX_s)$ converge uniformly to $\adeg(\ov{\sL}_0,\dots,\ov{\sL}_d|\sX_s)$ on $C$. Since the former are continuous by Proposition \ref{proposition_definability_for_simple_famielies}, the latter is, too. We are done since $S_{\GVF}$ is locally compact by remark \ref{remark:GVF_analytification_as_qf_GVF_types}.
\end{proof}

For applications the following theorem is crucial. It follows from \cite{Zhang_thesis_inequality}, but we have not found a suitable reference for the precise statement we need. Our reasoning uses some techniques from the proof of the arithmetic Demailly theorem in \cite{qu2023arithmeticdemaillyapproximationtheorem} or from \cite{Charles2021ArithmeticAA}. Note that we have the notion of semipositivity and integrability for both Zhang line bundles and global line bundles and that they are a priori different.

\begin{theorem}~\label{theorem:Zhang_divisors_are_induced_by_global_divisors}
    Every integrable Zhang line bundle $\ov{L}=(L,\phi)$ on a projective variety over a number field is induced by an integrable global divisor.
\end{theorem}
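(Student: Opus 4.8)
The goal is to show that an integrable Zhang line bundle $\ov{L}=(L,\phi)$ on a projective variety $X$ over a number field $\numfield$ arises from an integrable global line bundle in the sense of Definition~\ref{definition:global_line_bundles}, i.e.\ that $\phi$ is a $d$-limit of \emph{lattice} metric families in $\LPicQ(X)$. By linearity (writing $\ov{L}$ as a difference of semipositive Zhang line bundles) we may assume $\ov{L}$ is semipositive and that $L$ is ample, even very ample; a further twist by a fixed lattice bundle lets us assume $L$ is, say, $\ov{\cO(1)}$-comparable. The key point is to approximate a semipositive Zhang metric by metrics coming from embeddings into projective space, i.e.\ by pullbacks of the Weil (equivalently, up to the harmless Fubini--Study discrepancy controlled in Lemma~\ref{lemma_Veronese_pullbacks_converge_to_Weil}) metric.

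\textbf{Key steps.}
First, recall that a semipositive Zhang metric $\phi$ on an ample $L$ is by definition a uniform limit, \emph{place by place and uniformly over places} up to the Zhang dominated condition, of \emph{model} metrics coming from nef arithmetic models $(\cX_k, \cL_k)$ over $\Spec\ZZ$, together with a choice of archimedean semipositive metric. Second, I would invoke an arithmetic Bertini/Demailly-type statement, as in \cite{qu2023arithmeticdemaillyapproximationtheorem} or \cite{Charles2021ArithmeticAA}: for $N$ large, the arithmetic linear system $H^0(\cX_k, \cL_k^{\otimes N})$ (with the sup-norms coming from $\phi^{\otimes N}$) generates, and the induced \emph{Fubini--Study-type metric} $\frac{1}{N}\phi_{FS,N}$ converges uniformly (at every place, with global control) to $\phi$. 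Third, such an $\frac{1}{N}\phi_{FS,N}$ is, after clearing denominators, exactly a $\QQ$-lattice metric family in $\LPicQ(X)_{\numfield}$: it is the pullback of $\frac1N\ov{\cO(1)}^{\FS}$ along the projective embedding $X \hookrightarrow \PP^{r_N}_{\numfield}$ defined by a basis of sections, and by Lemma~\ref{lemma_Veronese_pullbacks_converge_to_Weil} the difference between the Fubini--Study and Weil normalisations is $O(r_N \log r_N / N)$ at archimedean places and $0$ elsewhere, hence negligible in the global distance $d$. Fourth, assemble: the sequence of lattice bundles $\ov{\sL}_N := (L, \frac1N\phi_{FS,N})$ satisfies $d(\ov{\sL}_N, \ov{L}) \to 0$ (here $S = \Spec\numfield$, so the compact-set condition in Definition~\ref{definition:global_line_bundles} is vacuous and $d$ is the honest global distance over the adelic curve associated to $\numfield$ with its standard structure), and each is semipositive since it is a pullback of a semipositive bundle (Lemma~\ref{lemma_pullback_of_adelic_are_adelic_copy}). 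Therefore $\ov{L}$ is a globally semipositive line bundle; for general integrable $\ov{L}$ one applies this to both terms of a semipositive difference, giving a globally integrable line bundle.

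\textbf{Main obstacle.}
The delicate point is \emph{uniformity over all places simultaneously} in the approximation $\frac1N\phi_{FS,N}\to\phi$. Over a number field the set of places is countable and for a fixed model almost all non-archimedean places are already handled exactly (the model metric equals the Weil-type metric there), so the real content is (i) the archimedean estimate --- controlling $\sup_{X^{\an}_\sigma}\bigl|\tfrac1N\log\|\cdot\|_{FS,N} - \log\|\cdot\|_\phi\bigr|$ uniformly in the archimedean place $\sigma$, which is where the arithmetic Demailly/Tian-type convergence enters and must be made effective enough that, after weighting by $\nu$, the global distance tends to $0$; and (ii) matching the finitely many "bad" non-archimedean places where the approximating model changes, which can be absorbed by first passing to a common model and using that the sup-norm difference there is $O(\log N / N)$ by standard estimates on the number of generators. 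Packaging (i) and (ii) so that one gets a single sequence with $d(\ov{\sL}_N,\ov{L})\to 0$ rather than mere placewise convergence is the crux; this is precisely the role played by the Zhang "dominated" condition, which guarantees the requisite uniformity and lets the upper-integral bound in the definition of $d$ go to zero.
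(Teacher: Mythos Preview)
Your overall strategy---reduce to the semipositive ample case, embed $X$ via sections of $L^{\otimes N}$, and show that the pulled-back projective metric converges to $\phi$---is exactly the paper's approach. However, there is a genuine gap at the step where you assert that ``such an $\frac{1}{N}\phi_{FS,N}$ is, after clearing denominators, exactly a $\QQ$-lattice metric family''. The Fubini--Study-type metric that the arithmetic Demailly/Tian convergence produces is the one associated to the \emph{sup norm} on $H^0(\cX,\cL^{\otimes N})$, whereas a lattice metric is by definition the pullback of the Weil metric $\ov{\cO(1)}$ for the \emph{standard coordinates} on $\PP^{r_N}$. These agree only if the chosen basis of sections is simultaneously an integral basis at every finite place and has sup norm essentially~$1$ at every archimedean place. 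Neither property is automatic, and your invocation of Lemma~\ref{lemma_Veronese_pullbacks_converge_to_Weil} is not relevant here: that lemma compares Weil and Fubini--Study for a fixed Veronese embedding, not the sup-norm metric versus the coordinate metric.

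The paper spends most of its proof on precisely this point. After reducing (via \cite[Theorem~3.5]{Zhang_thesis_inequality}) to the metric $h_n$ on $\cO(1)$ over $\PP(H^0(nL))$ induced by the sup norm, it argues separately: at finite places, arithmetic ampleness guarantees a basis of strictly small integral sections, so $h_n$ coincides with the Weil metric for that basis; at the archimedean place, it replaces $h_n$ by a nearby smooth positive metric and then, via Dini's theorem, reduces to a \emph{pointwise} approximation from below by small integral sections. The existence of such sections uses \cite[Theorem~2.2]{Zhang_thesis_inequality} (peak sections near a point) together with \cite[Theorem~4.2]{Zhang_thesis_inequality} (the lattice of integral sections admits a basis of norm $<r^N$ for some $r<1$), which allows one to round a real peak section to a nearby integral one without losing the pointwise estimate. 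Your sketch identifies the right black boxes but does not bridge the sup-norm metric to a lattice metric; filling this in is the substantive content of the argument.
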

\begin{proof}
We prove that arithmetically ample divisors are induced by integrable global divisors. Arithmetically ample divisors in turn are dense in semipositive Zhang divisors allowing us to finish the proof.

\begin{definition}
    A hermitian line bundle $\ov{\sL}$ over an arithmetic variety $\sX \to \Spec \bbZ$ is called arithmetically ample if
    \begin{enumerate}
        \item $\sL_\bbQ$ is ample,
        \item the metrics on $\ov{\sL}$ are semipositive at each place,
        \item the height $\ac_1(\ov{\sL}|_{\sY})^{\dim\sY}> 0$ for every irreducible horizontal subvariety $\sY \subseteq \sX$.
    \end{enumerate}
    A Zhang divisor is called arithmetically ample if it is induced by an arithmetically ample Hermitian line bundle.
\end{definition}

We want to approximate an arithmetically ample arithmetic divisor $\ov{L}$ defined on the model $\sX$ from below. For this we apply the maps $\iota_n:\cX \to \bbP(H^0(nL))$. We endow the line bundle $\cO(1)$ on $\bbP(H^0(nL))$ with the metric $h_n$ induced by the supremum norm on $H^0(nL)$. Semipositivity implies precisely that the induced metrics on $L$ converge uniformly to $\ov{L}$ by \cite[Theorem 3.5]{Zhang_thesis_inequality}. This is always semipositive at all places and the underlying line bundle is ample. It is arithmetically ample since the subspace of integral sections in $H^0(nL)$ has a basis $s_1,\dots,s_N$ consisting of strictly small integral sections, at least for $n$ large enough. We are reduced to proving the claim for $(\cO(1), h_n)$ on $\bbP(H^0(nL))$. At finite places, the metric on $\cO(1)$ agrees with the Weil metric for the basis $s_1\dots,s_N$ (as in \cite[Claim 3.1.15]{szachniewicz2023}). It remains to show the approximation at the infinite place. We can approximate the metric $h_n$ by a smooth metric with everywhere positive curvature.

From now on we assume that $L$ induces $\cO(1)$ on some projective space $\bbP^n$ with Weil metrics at finite places and a smooth metric with everywhere positive curvature at $\infty$. By Dini's theorem it suffices to prove pointwise approximation, i.e. for every point $x \in X(\bbC)$ and $\epsilon > 0$ there exists an integer $N > 0$ and a small integral section $s \in H^0(\cO(N))$ such that $-\frac{1}{N}\log|s(x)| < \epsilon$.

We prove first that for arbitrarily big $N$ we can find $l\in H^0(\bbP^n_{\bbR},\cO(N))$ satisfying $-\log|l|_{\sup} \geq \epsilon$ and $-\frac{1}{N}\log|l(x)| < 2 \epsilon$. Let $\ov{x}$ be the complex conjugate of $x$. We apply \cite[Theorem 2.2]{Zhang_thesis_inequality} to $\cO(1)$ and $Y=\{x,\ov{x}\}$ to obtain a holomorphic section $s$ of $\cO(N)$ with $-\log|s|_{\sup} = 0$ and $-\frac{1}{N}\log|s_N(x)|,-\frac{1}{N}\log|s_N(\ov{x})| < \epsilon/2$. The section $s_N \otimes \ov{s_N} \in H^0(\bbP^n_\bbC,\cO(2N))$ is then a section $l\in H^0(\bbP^n_{\bbR},\cO(2N))$ satisfying $-\log|l|_{\sup} \geq 0$ and $-\frac{1}{2N}\log|l(x)| < \epsilon$. Rescaling proves the claim.

The vector space $H^0(\bbP^n_{\bbR},\cO(N))$ has a norm given by the supremum norm on $X(\bbC)$. The global sections over $\bbZ$ form a lattice $\Lambda_N = H^0(\bbP^n_{\bbZ},\cO(N))$. By \cite[Theorem 4.2]{Zhang_thesis_inequality}, there exists $0<r<1$ such that for big enough $N$, there is a basis of $\Lambda_N$ consisting of vectors of norm $<r^N$. For $r<r'<1$, and big enough $N$ it follows that for every $l \in H^0(\bbP^n_{\bbR},\cO(N))$ there exists $l' \in \Lambda_N$ with $|l-l'|_{\sup} < (r')^N$. We apply this to the section $l$ constructed in the previous paragraph. Then, $l'$ eventually satisfies $|l'|_{\sup} \leq 1$. Furthermore, for small enough $\epsilon$ in the construction of $l$ we can ensure $-\log|r'| > 2 \epsilon$. Then, for big enough $N$ we have $-\frac{1}{N}\log|l'(x)| < 3 \epsilon$ proving the theorem.
\end{proof}

Let us present a result that allows to calculate the adelic intersection product over a GVF structure that comes from a polarisation, due to Chen and Moriwaki.

Let $(S, \ov{H}_1, \dots, \ov{H}_n)$ be a polarisation inducing a GVF structure on $\Field = \QQ(S)$. Let $X$ be a $d$-dimensional projective variety over $\Field$ which is the generic fiber of a projective morphism $\pi:\cX \to S$. Fix globally integrable line bundles $\ov{\cL}_0, \dots, \ov{\cL}_d \in \intPicQ(\cX/S)_{\QQ}$ and denote by $\ov{L}_0, \dots, \ov{L}_d$ their restriction to $\intPicQ(X)_{\Field}$.
\begin{proposition}~\cite[Proposition 4.5.1]{Adelic_curves_2}~\label{proposition_calculating_generic_intersection}
    The following equality holds:
    \[ \adeg(\ov{L}_0, \ldots, \ov{L}_d|X) = \ov{\cL}_0 \cdot \ldots \cdot \ov{\cL}_d \cdot \pi^*\ov{H}_1 \cdot \ldots \cdot \pi^*\ov{H}_n, \]
    where the left hand side is the adelic intersection product over the globally valued field $\Field$ and the right hand side is the arithmetic/Arakelov intersection product of integrable Zhang divisors on $\cX$.
\end{proposition}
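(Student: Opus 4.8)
The plan is to reduce to the case where the $\ov{\cL}_i$ are simple over $S$ and then to recognise both sides as one and the same limit of heights of resultants. By multilinearity of both sides we may assume each $\ov{\cL}_i$ is globally semipositive. Choose semipositive lattice line bundles $\ov{\cL}_i^{k}$ on $\cX$ over $S$ with $d_C(\ov{\cL}_i^{k}, \ov{\cL}_i) \to 0$ for every compact $C \subseteq S_{\GVF}$ (possible since global line bundles are by definition such limits, and Zhang line bundles on $\cX$ are globally integrable). Evaluating at the point $\eta^{\an} \in S_{\GVF}$ given by the generic point of $S$ equipped with the polarisation GVF structure on $\Field$: the left-hand side for the $\ov{\cL}_i$ is the limit in $k$ of the left-hand side for the $\ov{\cL}_i^{k}$, by the continuity estimate in the proof of Proposition~\ref{proposition:intersection_product_over_a_GVF}, and the right-hand side is the corresponding limit because arithmetic intersection numbers on a fixed projective arithmetic model of $\cX$ are continuous under uniform convergence of the metrics. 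Since a lattice line bundle is a $\QQ$-linear combination of simple ones, it therefore suffices to prove the identity when each $\ov{\cL}_i$ is simple over $S$, arising from a closed embedding $\alpha_i \colon \cX \hookrightarrow \PP^{k_i}_S$ with the Weil metric.

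In that situation Proposition~\ref{proposition_formula_for_adelic_intersection_as_heights_of_resultants} gives
\[
  \adeg(\ov{L}_0, \dots, \ov{L}_d \mid X) \;=\; \lim_{n \to \infty} \frac{1}{n^{d+1}} \, \height(R_{n}),
\]
where $R_n$ is the resultant of $X$ over $\Field$ with respect to the $n$-th Veronese composites of the $(\alpha_i)_\eta$, written as a tuple in the distinguished basis of $W(n)$, and the height is computed in the polarisation GVF structure on $\Field$. By Lemma~\ref{lemma:uniform_resultants} this $R_n$ is the generic fibre of a family of polynomials whose coefficients are regular on a dense open of $S$; spreading it out to a rational section of a line bundle on a projective arithmetic model $\sS$ of $S$ and unwinding the definition of the polarisation functional $l(\ov{D}) = \ov{H}_1 \cdots \ov{H}_n \cdot \ov{D}$ together with the product formula, $\height(R_n)$ becomes the arithmetic intersection number of $\ov{H}_1, \dots, \ov{H}_n$ with the Weil-metrized Cartier divisor cut out by $R_n$ on $\sS$; pulling back along $\pi$, this is an Arakelov intersection number on a model of $\cX$ involving $\pi^{*}\ov{H}_1, \dots, \pi^{*}\ov{H}_n$. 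Dividing by $n^{d+1}$ and letting $n \to \infty$, the Arakelov counterpart of the resultant formula of Remark~\ref{remark_Chen_Moriwaki_formula_intersections_and_resultants} identifies this limit with $\ov{\cL}_0 \cdots \ov{\cL}_d \cdot \pi^{*}\ov{H}_1 \cdots \pi^{*}\ov{H}_n$. This is, in essence, the argument of \cite[Proposition 4.5.1]{Adelic_curves_2}, to which we ultimately appeal.

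The step I expect to be the main obstacle is the bookkeeping in the second paragraph. The family of Lemma~\ref{lemma:uniform_resultants} is a priori defined only over an affine open of $S$, so one must extend it to a projective arithmetic model and verify that the contributions from the boundary — and from the loci where $\cX \to S$ is not flat or the fibres not reduced — do not perturb the limiting intersection number; one must also check that the Weil metric used throughout this article is exactly the metric occurring in Chen and Moriwaki's Arakelov resultant formula, so that the two normalisations agree. Granting these compatibilities, the passage to the limit is formal.
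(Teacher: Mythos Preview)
Your overall strategy — reduce by continuity/approximation and then invoke \cite[Proposition~4.5.1]{Adelic_curves_2} — is exactly what the paper does, but the paper's proof is two sentences: the case where both the polarisation $\ov{H}_j$ and the $\ov{\cL}_i$ are defined on a model over $\ZZ$ is literally \cite[Proposition~4.5.1]{Adelic_curves_2}, and the general case follows from continuity of the intersection product together with Theorem~\ref{thm:continuity_of_intersection_number}. Your second paragraph, which tries to unpack Chen--Moriwaki's argument through heights of resultants, is therefore unnecessary; and your own caveats in the third paragraph are fair — the description of ``the Weil-metrized Cartier divisor cut out by $R_n$'' and the step ``pulling back along $\pi$'' are not quite right as written (the resultant defines a rational map $S \dashrightarrow \PP^N$, and what enters is $R_n^*\ov{\cO(1)}$ on $S$, not a divisor on $\cX$; the passage to $\cX$ is a projection-formula computation hidden inside Chen--Moriwaki's proof, not a simple pullback). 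Since you end up appealing to their result anyway, you can simply cite it and drop this paragraph.

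There is one genuine omission in your reduction: you approximate the $\ov{\cL}_i$ but leave the polarisation $\ov{H}_1,\dots,\ov{H}_n$ fixed. In the paper's setup the $\ov{H}_j$ are arbitrary arithmetically nef Zhang divisors, not model divisors, so they too must be approximated to land in the case covered by \cite[Proposition~4.5.1]{Adelic_curves_2}. This is where Theorem~\ref{thm:continuity_of_intersection_number} is actually used: approximating the $\ov{H}_j$ by model divisors gives a sequence of polarisation GVF structures on $\Field$, i.e.\ a sequence of points of $S_{\GVF}$ converging to $\eta^{\an}$, and continuity of the fibrewise intersection number on $S_{\GVF}$ handles the left-hand side, while continuity of the Arakelov product on $\cX$ handles the right-hand side. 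Your argument only uses the single-point continuity estimate from Proposition~\ref{proposition:intersection_product_over_a_GVF}, which covers the variation in the $\ov{\cL}_i$ but not in the $\ov{H}_j$.
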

\begin{proof}
    The case of the polarisation and the line bundles $\ov{L}_i$ being defined on a model over $\bbZ$ is \cite[Proposition 4.5.1]{Adelic_curves_2}. Our version follows from continuity of the intersection product on semipositive line bundles and continuity of the intersection number in families, cf.\ Theorem \ref{thm:continuity_of_intersection_number}.
\end{proof}

\subsection{The trivial absolute value}

The discussion so far does not allow for non-trivial norms at trivial absolute values. However, this is something natural to do and does not affect the continuity result. We adapt some notions to the trivially valued case.

Let $K$ be a trivially valued field. Fix a projective morphism $\pi:\cX \to S$ of finite type $K$-schemes, where $S$ is not necessarily projective. When $V$ is a normed vector space over $K$, then $\ov{\cO(1)}$ on $\bbP(V)$ is endowed with a canonical quotient metric. Over a trivially valued field, we give the following variant of Definition~\ref{definition:simple_line_bundles} where the only difference is that we allow an arbitrary norm on $V$.

\begin{definition}
    We say that a metrized line bundle $\ov{\cL}$ on $\cX$ is \textit{simple over $S$}, if there is a closed embedding $j:\cX \to \bbP(V)_S$ over $S$ (for some normed vector space $V$), such that $\ov{\cL} = j^*\ov{\cO(1)}$. The elements of the $\QQ$-vector space of metrized $\QQ$-line bundles on $\cX$ generated by simple ones are called \textit{lattice line bundles on $\cX$ over $S$} and are denoted by $\LPicQ(\cX/S)$. The space of metrized $\QQ$-divisors coming from rational sections of such metrized $\QQ$-line bundles is denoted by $\LdivQ(\cX/S)$ and its elements are called \textit{lattice divisors on $\cX$ over $S$}. If $S$ is equal to $\Spec(K)$, we omit it in the notation. Moreover, if we want to emphasize the dependence on $K$, we use the notation $\LPicQ(\cX/S)_K$.
\end{definition}

We employ approximation to obtain a bigger class of line bundles. The metrized line bundles we obtain are closely related to well-known classes of metrics, cf.\ \cite[Section 2.3]{Adelic_curves_1}. Since $S$ is compact in the constructible topology, we can simplify the definition compared to Definition \ref{definition:global_line_bundles}

\begin{definition}~\label{definition:trivial_global_line_bundles}
    A \emph{global line bundle} $\ov{\sL}$ on $\sX$ over $S$ is defined to be a line bundle $\sL$ with a metric $\phi$ such that there is a sequence of lattice line bundles $\ov{\sL}_k=(\sL,\phi_k)$ such that
    \[
        \lim_{k \to\infty} \|\phi_k-\phi\|_{\sup} = 0.
    \]
    It is called semipositive if the metrics $\phi_k$ can be chosen semipositive on fibres. A global line bundle $\ov{\sL}$ is called integrable if there are semipositive line bundles $\ov{\sL}_+$ and $\ov{\sL}_-$ and an isometry $\ov{\sL} \cong \ov{\sL}_+ \otimes \ov{\sL}^\vee_-$.

    We denote the isometry classes of global $\bbQ$-line bundles by $\aPicQ(\cX/S)$. The subgroup of integrable line bundles is denoted by $\intPicQ(\cX/S)$. If $S = \Spec K$, we often omit it in the notation.
\end{definition}

Proving the following theorem allows us to consider trivial absolute values for general GVFs.

\begin{theorem}
    Let $\cX \to S$  be a flat projective morphism of finite type schemes over a trivially valued field $K$ of relative dimension $d$. Let $\ov{\sL}_0,\dots,\ov{\sL}_d\in \intPicQ(\cX/S)$ be integrable line bundles on $\cX$ over $S$. Then, the map
    \begin{align*}
        S &\to \bbR\\
        s&\mapsto\adeg(\ov{\sL}_0,\dots,\ov{\sL}_d|\sX_s)
    \end{align*}
    is continuous with respect to the constructible topology on $S$.
\end{theorem}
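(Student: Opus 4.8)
The plan is to mirror the proof strategy already used for Theorem~\ref{thm:continuity_approximation} in the non-trivially-valued case, simplified by the compactness of $S$ in the constructible topology. First I would reduce to the case where $S$ is affine and the line bundles $\ov{\sL}_0, \dots, \ov{\sL}_d$ are simple over $S$: by multilinearity and the definition of integrable global line bundles, it suffices to treat globally semipositive line bundles, and by a partition/finite-cover argument (using that the constructible topology makes $S$ a compact space covered by finitely many affine pieces) we may assume $S = \Spec(A)$ with $\cX \hookrightarrow \bbP(V)_S$. The key new feature over a trivially valued field is that $V$ carries an arbitrary norm, so the resultant $R_n$ built from the family of embeddings $\alpha_i^{\otimes n}$ (as in Lemma~\ref{lemma:uniform_resultants}, whose proof is purely algebro-geometric and goes through verbatim, since it only uses flatness and base change in the Chow groups) must have its norm computed with respect to that given norm on $V_i(n) = S^n V_i$ rather than a distinguished monomial basis.

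Next I would establish the trivially-valued analogue of Proposition~\ref{proposition_formula_for_adelic_intersection_as_heights_of_resultants}: namely that $\adeg(\ov{\sL}_0(s), \dots, \ov{\sL}_d(s) \mid \cX_s) = \lim_n \frac{1}{n^{d+1}} \height_s(R_n(s))$, where $\height_s$ is computed using the trivial GVF structure on $\kappa(s)$ and the quotient norm on $W(n)$ induced from the norms on the $V_i$. Over the trivial absolute value the integral formula of Remark~\ref{remark_Chen_Moriwaki_formula_intersections_and_resultants} collapses dramatically --- there is a single (trivial) place, no archimedean contribution, and the arithmetic intersection number of simple (quotient-metric) line bundles is literally computed by the trivial-valuation norm of the resultant --- so this identity should in fact be an equality for each finite $n$ after the standard normalization, or at worst an approximation with an error term of the shape $O\!\big(\max_i r_i \delta_i \cdot \tfrac{\log n}{n}\big)$ coming from comparing the quotient metric on the $n$-th Veronese with its limit, exactly as in Remark~\ref{remark_uniformity_in_height_of_resultants_formula}. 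Crucially the error bound is \emph{uniform in $s$}, since $r_i$ and $\delta_i$ depend only on the flat family, not on the fiber.

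With the uniform resultant $R_n \in \bbP(W(n))$ having coefficients in $A$, the function $s \mapsto \frac{1}{n^{d+1}}\height_s(R_n(s))$ is continuous on $S$ with the constructible topology for each fixed $n$ (this is the trivially-valued analogue of the statement that $\height(R_n(y))$ is continuous, which for the trivial valuation reduces to the constructibility of the valuation of a polynomial's coefficients in $A$, i.e. a Zariski-constructible stratification statement). Then a three-$\varepsilon$ argument identical to the one in the proof of Proposition~\ref{proposition_definability_for_simple_famielies} --- using the $n$-uniform bound to pass to the limit, and continuity at finite $n$ --- shows that the limit function $s \mapsto \adeg(\ov{\sL}_0, \dots, \ov{\sL}_d \mid \cX_s)$ is continuous for simple, hence (by uniform approximation in $\|\cdot\|_{\sup}$, using Definition~\ref{definition:trivial_global_line_bundles} and the bound $|\ov{L}\cdot\ov{L}_1\cdots\ov{L}_d| \le C\deg(L_1\cdots L_d)$ from Proposition~\ref{proposition:intersection_product_over_a_GVF}) for all integrable global line bundles. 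I expect the main obstacle to be bookkeeping around the arbitrary norm on $V$: one must check that Lemma~\ref{lemma_Veronese_pullbacks_converge_to_Weil}-type comparisons between the quotient metric on the Veronese-pulled-back $\cO(1)$ and its limiting metric still hold over the trivial valuation with an $s$-independent $O(\tfrac{\log n}{n})$ bound, and that the quotient-norm of the resultant behaves well under the Segre/tensor construction of $W(n)$ --- but since over the trivial absolute value all these norms are ultrametric and the relevant estimates only involve dimensions, this should be routine.
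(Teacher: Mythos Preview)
Your proposal is correct in outline but considerably more elaborate than necessary; the paper exploits the trivial-valuation setting to bypass most of the machinery you invoke. Over a trivially valued field there are no archimedean places, so the entire Veronese/Fubini--Study apparatus of Section~\ref{subsection_heights_of_resultants} is superfluous: the intersection number of simple line bundles on a fibre $\cX_s$ equals $\log\|R_1(s)\|$ \emph{exactly}, by \cite[Remark~3.9.3]{Adelic_curves_2}. Your hedged ``equality for each finite $n$'' is in fact the whole story, and $n=1$ already suffices --- no limit, no three-$\varepsilon$ argument, and no Lemma~\ref{lemma_Veronese_pullbacks_converge_to_Weil}-type comparison is needed. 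Your anticipated ``main obstacle'' therefore does not exist.

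For the continuity of $s\mapsto\log\|R(s)\|$, the paper uses a fact you do not isolate: over a trivially valued field every finite-dimensional normed space admits an orthogonal basis. Writing $R=\sum_i a_i e_i$ with $a_i\in A$ and $e_i$ orthogonal in $W=S^{\delta_0}(V_0^\vee)\otimes_K\cdots\otimes_K S^{\delta_d}(V_d^\vee)$, one has $\|R(s)\|=\max\{\|e_i\|:a_i(s)\neq 0\}$, which is transparently constructible (indeed Zariski lower semicontinuous). This sharpens your appeal to ``constructibility of the valuation of a polynomial's coefficients in $A$'': the valuation of each $a_i(s)$ in $\kappa(s)$ is always $0$ or $+\infty$, so what varies constructibly is only \emph{which} coefficients vanish, and the orthogonal basis is what lets this determine the norm. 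Relatedly, your phrase ``$\height_s$ computed using the trivial GVF structure on $\kappa(s)$'' is slightly misleading --- a genuine GVF height for the trivial structure would be identically zero; what is being computed is the local norm at the single trivial place, carrying the nontrivial norm data from the $V_i$. The final passage from simple to integrable line bundles by uniform approximation is exactly as you describe and as in Theorem~\ref{thm:continuity_approximation}.
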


\begin{proof}
We follow the notation and proof strategy from section \ref{sec:definability}. Let $\pi:\cX \to S$ be a flat projective morphism with $d$-dimensional fibers. Suppose $S = \Spec A$ is an affine variety. Let $\ov{\cL}_0, \dots, \ov{\cL}_d \in \LPic(\cX/S)$ be simple over $S$ with embeddings $\alpha_i:\cX \to \PP(V_i)_S$. For $s \in S$ and any object $Q$ over $S$, the notation $Q(s)$ denotes its base change to $s$. Denote by $\delta_i$ the intersection number
\[ \deg(\cL_0(s) \cdot \ldots \cdot \cL_{i-1}(s) \cdot \cL_{i+1}(s) \cdot \ldots \cdot \cL_d(s)|\cX_s)\]
for any $s \in S(\Field)$ in any field extension $K \subset \Field$. It is independent of the choice of $s$ since the family $\pi:\cX \to S$ is flat.

We invoke Lemma \ref{lemma:uniform_resultants} to obtain a polynomial $R=R_1\in A \otimes_K S^{\delta_0}(V_0^{\vee}) \otimes_K \ldots \otimes_K S^{\delta_d}(V_d^{\vee})$ such that $R(s)$ is the resultant for all $s\in S$. By applying \cite[Remark 3.9.3]{Adelic_curves_2}, we compute the intersection number to be 
\[
    \adeg(\ov{\sL}_0,\dots,\ov{\sL}_d|\sX_s) = \log \|R(s)\|.
\]
Since $K$ is trivially valued, the norm on $S^{\delta_0}(V_0^{\vee}) \otimes_K \ldots \otimes_K S^{\delta_d}(V_d^{\vee})$ has an orthogonal basis. It follows that $\log \|R(\blank)\|$ is continuous with respect to the constructible topology. In fact, it is even lower semicontinuous with respect to the Zariski topology.

The approximation statement in Theorem \ref{thm:continuity_approximation} applies to prove the statement. 
\end{proof}

\begin{remark}
    For the height with respect to semipositive line bundles $\sL_0,\dots,\sL_d$ the height is lower semicontinuous with respect to the Zariski topology. This follows immediately from the proof.
\end{remark}

\section{Average intersections}~\label{sec:Average_intersections}
Let $f_1, \dots, f_m$ be Laurent polynomials in $n$ variables with coefficients in a number field $K$. Each $f_i$ defines a hypersurface $V_i$ inside a proper toric variety $T$ with torus $\bbT=\bbG_m^n \subset T$. Let $(u_{1,j},\dots,u_{m,j})_j$ be a generic sequence of small points in $\bbT^m$ with respect to the Weil height on $\bbT^m \subset \PP^{nm}$. For integrable Zhang divisors $\ov{D}_0,\dots,\ov{D}_{n-m}$ on $T$ we want to compute
\[
\lim_{j\to\infty} \adeg(\ov{D}_0,\dots,\ov{D}_{n-m}|u_{1,j}V_1\cap\dots\cap u_{m,j}V_m).
\]
Denote the coordinates of the $i$-th factor of $\bbG_m^n$ by $w_{1,i},\dots,w_{n,i}$. We let $V \subset T \times \bbT^m$ be the intersection of the vanishing loci of $f_i(z_1w^{-1}_{1,i},\dots, z_n w^{-1}_{n,i})$. We note that under $V \to \bbT^m$ the generic fibre has dimension $n-m$. The map $V \to \bbT^m$ is flat of relative dimension $n-m$ over a dense Zariski open $U \subseteq \bbT^m$. We define global line bundles $\ov{\sL}_0,\dots,\ov{\sL}_{n-m}\in \intPicQ(V/U)$ by pulling back $\sO(\ov{D}_0),\dots,\sO(\ov{D}_{n-m})$ to $T\times \bbT^m$ and restricting to $V$. This makes sense by Theorem~\ref{theorem:Zhang_divisors_are_induced_by_global_divisors}. By Theorem \ref{thm:continuity_of_intersection_number}, the map
\begin{align*}
        U_{\GVF} &\to \bbR\\
        u&\mapsto\adeg(\ov{\sL}_0,\dots,\ov{\sL}_d|\sX_u)
\end{align*}
is continuous. We note that on $(u_1,\dots,u_m) \in U(\Kbar)$ the above map is given by
\[
    (u_1,\dots,u_m) \mapsto \adeg(\ov{D}_0,\dots,\ov{D}_{n-m}|u_1 V_1\cap\dots\cap u_m V_m).
\]

By Corollary~\ref{corollary:Yuan_equidistribution_as_GVF_convergence_for_projective_space}, a generic small sequence $(u_{1,j},\dots,u_{m,j})_j$ in $\bbT^m$ has the corresponding points on $U_{\GVF}$ converging to $K(w_{1,1},\dots,w_{n,1},\dots,w_{1,m},\dots,w_{n,m})$ with the polarized GVF structure associated to $(\prod_{i=1}^m \bbP^{n},\pi_1^* \ov{\sO(1)}^n,\dots,\pi_m^* \ov{\sO(1)}^n)$. Denote this limit point by $\eta^{\can} \in U_{\GVF}$.

Since the right hand side is the intersection over a polarized GVF it can be computed up to birational modification as the height of $V \subset T \times \prod^m_{i=1} \bbP^n$ with respect to $\pi_h^*\ov{D}_0\dots\pi_h^*\ov{D}_{n-m}\cdot(\sum\pi_i^* \ov{\sO(1)})^{nm}$ by Proposition \ref{proposition_calculating_generic_intersection}. In other words, we get the following.

\begin{lemma}~\label{lemma:the_limit_of_intersections_as_the_polarised_intersection}
    Suppose that $\ov{D}_0,\dots,\ov{D}_{n-m}$ are integrable Zhang divisors on $T$ over $K$. Then,
    \[
        \lim_{j\to\infty} \adeg(\ov{D}_0,\dots,\ov{D}_{n-m}|u_{1,j}V_1\cap\dots\cap u_{m,j}V_m) 
    \]
    \[  
        = \adeg(\pi_h^*\ov{D}_0\dots\pi_h^*\ov{D}_{n-m}\cdot\pi_1^* \ov{\sO(1)}^{n}\cdots\pi_m^* \ov{\sO(1)}^{n} |V).
    \]
    On the right hand side, the $\ov{D}_i$ should be viewed as adelic divisors pulled back to $F$.
\end{lemma}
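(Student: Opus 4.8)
The plan is to assemble this from three ingredients already in place. First, the continuity result (Theorem~\ref{thm:continuity_of_intersection_number}), applied to the flat family $V \to U$ with the globally integrable line bundles $\ov{\sL}_0,\dots,\ov{\sL}_{n-m}$ obtained by pulling back $\sO(\ov{D}_0),\dots,\sO(\ov{D}_{n-m})$ to $T\times\bbT^m$ and restricting to $V$ (legitimate by Theorem~\ref{theorem:Zhang_divisors_are_induced_by_global_divisors} together with the base change remark): this gives that the function $u \mapsto \adeg(\ov{\sL}_0,\dots,\ov{\sL}_{n-m}\mid V_u)$ is continuous on $U_{\GVF}$. Second, I would identify its values on classical points: for $(u_1,\dots,u_m)\in U(\Kbar)$ the fiber $V_{(u_1,\dots,u_m)}$ is exactly $u_1 V_1 \cap \dots \cap u_m V_m$ (with the toric shift absorbed into the coordinate change $z_i \mapsto z_i w_{i,\cdot}^{-1}$), and the restricted line bundles recover the Zhang divisors $\ov{D}_i$, so the continuous function agrees with $(u_1,\dots,u_m)\mapsto \adeg(\ov{D}_0,\dots,\ov{D}_{n-m}\mid u_1 V_1\cap\dots\cap u_m V_m)$ there. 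Third, I would invoke Corollary~\ref{corollary:Yuan_equidistribution_as_GVF_convergence_for_projective_space}: the generic small sequence $(u_{1,j},\dots,u_{m,j})_j$ has analytifications $u_j^{\an}\in U_{\GVF}$ converging to the point $\eta^{\can}$ given by the generic point of $\prod_{i=1}^m \bbP^n$ equipped with the polarized GVF structure associated to $(\prod_{i=1}^m\bbP^n, \pi_1^*\ov{\sO(1)}^n,\dots,\pi_m^*\ov{\sO(1)}^n)$.

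Combining these, continuity forces
\[
    \lim_{j\to\infty}\adeg(\ov{D}_0,\dots,\ov{D}_{n-m}\mid u_{1,j}V_1\cap\dots\cap u_{m,j}V_m) = \adeg(\ov{\sL}_0,\dots,\ov{\sL}_{n-m}\mid V_{\eta^{\can}}),
\]
the right-hand side being the adelic intersection product over the polarized GVF $F = K(w_{i,j})$. To rewrite this as a classical Arakelov intersection number on a model I would apply Proposition~\ref{proposition_calculating_generic_intersection}: since the GVF structure on $F$ comes from the polarisation $(\prod_{i=1}^m\bbP^n,\pi_1^*\ov{\sO(1)}^n,\dots,\pi_m^*\ov{\sO(1)}^n)$, and $V$ (the total space over $\bbT^m$, viewed inside $T\times\prod_{i=1}^m\bbP^n$ after taking closures) is a projective family whose generic fiber is $V_{\eta^{\can}}$, the proposition yields
\[
    \adeg(\ov{\sL}_0,\dots,\ov{\sL}_{n-m}\mid V_{\eta^{\can}}) = \adeg(\pi_h^*\ov{D}_0\cdots\pi_h^*\ov{D}_{n-m}\cdot\pi_1^*\ov{\sO(1)}^n\cdots\pi_m^*\ov{\sO(1)}^n\mid V),
\]
where $\pi_h$ is projection to $T$ and the $\ov{D}_i$ on the right are pulled back (as Zhang divisors) to the model. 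Chaining the two displays gives the asserted equality.

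The main obstacle I expect is bookkeeping rather than conceptual: making sure the global line bundles $\ov{\sL}_i$ on $V/U$ defined by pullback-and-restriction are genuinely the objects to which both Theorem~\ref{thm:continuity_of_intersection_number} and Proposition~\ref{proposition_calculating_generic_intersection} apply, i.e. that restricting them to $V_{\eta^{\can}}$ over the polarized GVF $F$ gives exactly $\ov{D}_0,\dots,\ov{D}_{n-m}$ pulled back to $F$, and that passing from the open family $V\to U$ to a projective model over $\prod_{i=1}^m\bbP^n$ does not change the intersection number (the divisors involved are birational invariants in the relevant sense, and $V_{\eta^{\can}}$ lies over the generic point, away from the boundary added in compactifying). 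One also needs that $\eta^{\can}$ does lie in $U_{\GVF}$, i.e. the generic point of $\prod_{i=1}^m\bbP^n$ maps into the flat locus $U$, which holds since $U$ is dense open. Once these identifications are spelled out, the three-step limit argument is immediate.
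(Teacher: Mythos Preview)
Your proposal is correct and follows essentially the same approach as the paper: the lemma is stated there as an immediate consequence of the discussion preceding it, which assembles exactly the three ingredients you name (continuity from Theorem~\ref{thm:continuity_of_intersection_number}, convergence of the small sequence to $\eta^{\can}$ via Corollary~\ref{corollary:Yuan_equidistribution_as_GVF_convergence_for_projective_space}, and the identification of the value at $\eta^{\can}$ as an Arakelov intersection number via Proposition~\ref{proposition_calculating_generic_intersection}). Your bookkeeping caveats about the compactification and about $\eta^{\can}\in U_{\GVF}$ are the right points to check and are handled in the paper only implicitly.
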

We need to be slightly careful when applying Fubini's theorem in the non-Archimedean setting. This is because of the failure of $(X \times Y)^{\an} = X^{\an} \times Y^{\an}$. In our setting, there is a preferred very affine chart given by the torus in the toric variety on which we may apply Fubini.

We sketch an argument that one can always apply Fubini in such situations. This is based on \cite[Proposition 3.4.21]{stoffel_thesis}. If $\alpha \in A^{(\dim X, \dim X)}(X^{\an})$ and $\beta \in A^{(\dim Y, \dim Y)}(Y^{\an})$ are smooth forms that are defined on very affine charts of integration $U\subseteq X$ and $V \subseteq Y$. Then, $U\times V \subseteq X\times Y$ is a very affine chart of integration for $\pi_X^*\alpha\wedge \pi_Y^*\beta$. Furthermore, $\trop(U\times V) = \trop(U) \times \ \trop(V)$ by Rosenlicht's theorem. Then, one needs to prove that one has the product measure on each polyhedron in the tropicalization. This is done by adapting \cite[Lemma 3.4.16]{stoffel_thesis}. The general case follows by approximation. We refer to \cite{Gubler_forms_and_currents} for an introduction to the theory of forms in the non-Archimedean setting. 
\begin{theorem}
\label{thm:ronkin_integral_vanishing}
    Let $\sR_i$ denote the Ronkin divisor associated to $g_i = f_i(z_1w^{-1}_{1,i},\dots, z_n w^{-1}_{n,i})$ on a suitable toric blowup $X$ of $T \times \prod^m_{i=1} \bbP^n$. Let $\tilde{V}$ denote the common vanishing locus of the $g_i$. Then, we have an identity
    \[
    \adeg(\sR_1\dots \sR_m \pi_h^*\ov{D}_0\dots\pi_h^*\ov{D}_{n-m}\cdot\pi_1^* \ov{\sO(1)}^{n}\cdots\pi_m^* \ov{\sO(1)}^{n}|X) 
    \]
    \[
    = \adeg(\pi_h^*\ov{D}_0\dots\pi_h^*\ov{D}_{n-m}\cdot\pi_1^* \ov{\sO(1)}^{n}\cdots\pi_m^* \ov{\sO(1)}^{n}|\tilde{V}).
    \]
\end{theorem}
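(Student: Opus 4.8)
The plan is to reduce this statement to an iterated application of the variation-of-a-single-Ronkin-divisor theorem (the cited variation of \cite[Theorem 5.12]{gualdi_hypersurfaces_in_toric_varieties}), applied one $g_i$ at a time, on the successive complete intersections defined by $g_1, \dots, g_{i-1}$. First I would fix a toric blowup $X$ of $T \times \prod_{i=1}^m \bbP^n$ on which each $NP(g_i)$ defines a Cartier divisor simultaneously, so that all the Ronkin divisors $\sR_i$ and all the sections cut out by the $g_i$ are available on one model; this is a standard toric refinement argument. On such an $X$, each $g_i$ determines a regular section of $\sO(D_{NP(g_i)})$ vanishing precisely on the closure of $V(g_i)$, and the Ronkin metric equips $\sR_i$ with a semipositive structure as recalled before Lemma~\ref{lemm:pushforward_of_ronkin}.

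The key step is an induction on $m$. Write $Z_0 = X$ and $Z_i$ for the (closure of the) common vanishing locus of $g_1, \dots, g_i$, so $Z_m = \tilde V$. At stage $i$ one applies the cited variation theorem to the hypersurface cut out by $g_i$ inside $Z_{i-1}$, with the remaining divisors $\sR_{i+1}, \dots, \sR_m, \pi_h^*\ov D_0, \dots, \pi_h^*\ov D_{n-m}, \pi_1^*\ov{\sO(1)}, \dots$ playing the role of the auxiliary semipositive toric Zhang divisors. This yields
\[
\adeg(\sR_i \cdots \sR_m \, \pi_h^*\ov D_\bullet \cdots \mid Z_{i-1}) = \adeg(\sR_{i+1} \cdots \sR_m \, \pi_h^*\ov D_\bullet \cdots \mid Z_i),
\]
and telescoping over $i = 1, \dots, m$ gives exactly the asserted identity. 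One must check at each stage that $Z_{i-1}$ is a variety (or a cycle) of the right dimension on which the relevant divisors restrict to semipositive toric Zhang divisors, and that $g_i$ restricted to $Z_{i-1}$ still has Newton polytope defining a Cartier divisor there — this is where the genericity of the points, pushed into the GVF setting, and Bézout-type dimension counts enter; on the dense open where the family $V \to \bbT^m$ is flat of relative dimension $n-m$ this is automatic, and the toric structure guarantees the sections restrict nontrivially to the relevant toric strata as in \cite[Theorem 4.3]{gualdi_hypersurfaces_in_toric_varieties}.

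The main obstacle I expect is bookkeeping the compatibility of the Ronkin divisors across the single fixed model $X$: the variation theorem as stated is for one Laurent polynomial on a toric variety, so at stage $i$ one needs $Z_{i-1}$ itself to be sufficiently toric — or one needs a version of the variation statement for the restriction of a toric Zhang divisor with its Ronkin section to an arbitrary (not necessarily toric) subvariety. This is handled by the standard observation that $\adeg(\dots \mid Z_{i-1})$ only depends on the cycle class of $Z_{i-1}$ and that the Ronkin-section argument of \cite[Theorem 5.12]{gualdi_hypersurfaces_in_toric_varieties} is local on the ambient toric variety, so it pushes through for the divisor $D_{NP(g_i)}$ with its Ronkin metric intersected against any cycle; alternatively one may phrase the whole computation upstairs on $X$, replacing $\adeg(\cdots\mid Z_i)$ by $\adeg(\cdots \cdot [Z_i] \mid X)$ and using multilinearity of the arithmetic intersection product together with the fact that $[Z_i] = c_1(\sO(D_{NP(g_i)})) \cap [Z_{i-1}]$ as cycles (Bézout on $X$), so that the Ronkin identity becomes a statement purely about replacing the ``algebraic'' first Chern class of $\sO(D_{NP(g_i)})$ by the arithmetic class $\sR_i$. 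Once this cycle-theoretic reformulation is in place the induction is routine.
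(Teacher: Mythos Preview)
Your iterative skeleton matches the paper's, but there is a genuine gap at the point where you invoke the variation of \cite[Theorem~5.12]{gualdi_hypersurfaces_in_toric_varieties} on the non-toric cycle $Z_{i-1}$. That theorem (and its proof) requires the ambient space to be toric and the auxiliary divisors to be toric: the vanishing $\int\log|fs_f|\,c_1(\ov D_0)\cdots c_1(\ov D_{d-1})=0$ holds because the Monge--Amp\`ere measure of toric divisors on a toric variety disintegrates along tropicalisation into the fibre measures $\sigma_u$, and the Ronkin function is by construction the $\sigma_u$-average of $-\log|f|$. As soon as you integrate against an arbitrary cycle $W$ this disintegration is lost and the integral need not vanish; for instance if $W$ is a single point $p$ the ``integral'' is $\log|f(p)|+\rho_f(\trop(p))$, which has no reason to be zero. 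So your claim that ``the Ronkin-section argument is local on the ambient toric variety, so it pushes through for any cycle'' is false, and your second workaround---rewriting $[Z_i]=c_1(\sO(D_{NP(g_i)}))\cap[Z_{i-1}]$ and then ``replacing the algebraic first Chern class by $\sR_i$''---is exactly the statement to be proved, not a reduction of it.

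What makes the vanishing go through in this particular situation is a structural feature of the $g_i$ that you never invoke: $g_r(z,w_1,\dots,w_m)=f_r(zw_r^{-1})$ depends only on $z$ and $w_r$, while $g_1,\dots,g_{r-1}$ do not involve $w_r$ at all. Hence over the big torus the cycle $\operatorname{div}(g_1)\cap\dots\cap\operatorname{div}(g_{r-1})$ is a product of something in $\bbT\times\prod_{i<r}\bbT_i$ with the full factor $\bbT_r\times\prod_{i>r}\bbT_i$. The paper exploits this by applying Fubini along the $\bbT_r$-factor: the inner integral becomes $\int_{\bbT_r}\log|g_r s_r|\,d\sigma_0(t_r)$, and since $\trop(t_r)=0$ on that fibre one computes directly from the definition of $\rho_r$ that this equals $-\rho_r(\trop(t))+\rho_r(\trop(t))=0$ pointwise in the remaining variables. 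That explicit fibrewise Ronkin cancellation---not a black-box citation---is the missing idea; once you insert it, your telescoping induction is exactly the paper's argument.
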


\begin{proof}
    In order not to overburden notation we omit the superscript denoting the analytification.
    
    Let $s_i$ denote the distinguished section of $\sO(\sR_i)$. The sections $g_i s_i$ of $\sO(\sR_i)$ have common vanishing locus $\tilde{V}$. By the iterative definition of the height, the equality of intersection numbers is equivalent to the vanishing of the integrals occurring in the height computations. These are of the form 
    \[
    \int_{\cdiv(g_1)\cap\dots\cap\cdiv(g_{r-1})} \log|g_r s_r| c_1(\sR_{r+1})\dots c_1(\sR_m)\pi_h^* c_1(\ov{D}_0)\dots \pi_h^* c_1(\ov{D}_{n-m})\prod^m_{i=1}\pi_i^*c_1(\ov{\sO(1)}).
    \]
    Let us write $\omega$ for $c_1(\sR_{r+1})\dots c_1(\sR_m)\pi_h^* c_1(\ov{D}_0)\dots \pi_h^* c_1(\ov{D}_{n-m})\prod_{i\neq r}\pi_i^*c_1(\ov{\sO(1)})$.
    To each of them we can apply Fubini to obtain
    \begin{align*}
       &\int_{\cdiv(g_1)\cap\dots\cap\cdiv(g_{r-1})\cap X} \log|g_r s_r| \pi_r^*c_1(\ov{\sO(1)})^n\omega\\
       =&\int_{\left(\cdiv(g_1)\cap\dots\cap\cdiv(g_{r-1})\cap(\bbT\times \prod^{r-1}_{i=1} \bbT)\right)\times \bbT\times\prod^m_{i=r+1} \bbT} \log|g_r s_r| \pi_r^*c_1(\ov{\sO(1)})^n\omega\\
       =&\int_{\left(\cdiv(g_1)\cap\dots\cap\cdiv(g_{r-1})\cap(\bbT\times \prod^{r-1}_{i=1} \bbT)\right)\times \prod^m_{i=r+1} \bbT} \left(\int_{\bbT}\log|g_r s_r| c_1(\ov{\sO(1)})^n\right)\omega\\
       =&\int_{\left(\cdiv(g_1)\cap\dots\cap\cdiv(g_{r-1})\cap(\bbT\times \prod^{r-1}_{i=1} \bbT)\right)\times \prod^m_{i=r+1} \bbT} \left(\int_{\bbT}\log|g_r s_r| d\sigma_0(x)\right)\omega.
    \end{align*}
    The first equality follows since a Zariski closed subset with empty interior is a nullset with respect to a measure associated to differential forms.

    We claim that the inner integral $\int_{\bbT}\log|g_r s_r| \sigma_0(x)$ vanishes at each fiber. Recall that $g_r(t,t_1,\dots,t_m) = f_r(t t^{-1}_r)$. Let $\pi_r$ denote the projection from $\bbT \times \prod^{m}_{i=1} \bbT$ to the $r$-th component $\bbT_r$ in the second factor. 
    
    We compute
    \begin{align*}
        \log |s_r(t,t_1,\dots,t_m)| =& \int_{\trop^{-1}(\trop(t,t_1,\dots,t_m))} -\log|g_r(x)| d \sigma_{(t,t_1,\dots,t_m)}(x)\\ =& \int_{\trop^{-1}(\trop(t,t_r))\subset (\bbT\times\bbT_r)^{\trop}} -\log| f_r(xx_r^{-1})| d \sigma_{(t,t_r)}(x,x_r)\\ =& \int_{\trop^{-1}(\trop(tt_r^{-1}))\subset \bbT^{\trop}_r} -\log| f_r(x)| d \sigma_{tt_r^{-1}}(x) = \rho_r(tt_r^{-1}).
    \end{align*}
    Here, $\rho_r$ denotes the Ronkin function for $f_r$ and $\bbT_r$ denotes the $r$-th factor of the torus.
    
    Consider the fibre over an element 
    \[
        (t,t_1,\dots,t_{r-1},t_{r+1},\dots,t_m) \in \left(\cdiv(g_1)\cap\dots\cap\cdiv(g_{r-1})\cap(\bbT\times \prod^{r-1}_{i=1} \bbT)\right)\times \prod^m_{i=r+1} \bbT.
    \]
    Over this fibre we evaluate the integral
    \begin{align*}
        \int_{\bbT}\log|g_r(t,t_1,\dots,t_m) s_r(t,t_1,\dots,t_m)| \sigma_0(t_r) &= \int_{\bbT} \log|f_r(tt_r^{-1})|+\rho_r(tt_r^{-1}) \sigma_0(t_r)\\ &=-\rho_r(t) + \rho_r(t) = 0.
    \end{align*}
\end{proof}

\begin{lemma}
\label{lemm:intersection_pushforward}
    Let $R_i$ denote the Ronkin line bundle associated to $f_i$ and suppose it is already defined on $T$ and assume $\ov{D}_0,\dots,\ov{D}_{n-m}$ are toric. Let $X$ denote a suitable toric blow-up of $T\times\prod^m_{i=1} \bbP^n$ over which $\sR_i$ are defined. We have an equality of intersection numbers
    \[
    \adeg(\sR_1\dots \sR_m \pi_h^*\ov{D}_0\dots\pi_h^*\ov{D}_{n-m}\cdot\pi_1^* \ov{\sO(1)}^{n}\cdots\pi_m^* \ov{\sO(1)}^{n}|X) 
    \]
    \[
    = \adeg(R_1\dots R_m \ov{D}_0\dots\ov{D}_{n-m}|T)
    \]
\end{lemma}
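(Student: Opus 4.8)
The plan is to compute both sides by means of the combinatorial formula of Theorem~\ref{thm:convex_formula_for_toric_intersection} and then to match the resulting sums of mixed integrals term by term, the bridge being the pushforward identity of Lemma~\ref{lemm:convex_pushforward}. By multilinearity of the arithmetic intersection product and of the mixed integral, and since an integrable toric divisor is a difference of semipositive ones, I would first reduce to the case where $\ov{D}_0,\dots,\ov{D}_{n-m}$ are semipositive. Then all divisors occurring on either side are semipositive toric Zhang divisors on a proper toric variety: $R_i$ and $\ov{D}_j$ on $T$, and $\sR_i$, $\pi_h^*\ov{D}_j$, $\pi_i^*\ov{\sO(1)}$ on $X$; so Theorem~\ref{thm:convex_formula_for_toric_intersection} applies to each. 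As arithmetic intersection numbers, polytopes, and roof functions are unchanged under pullback along the toric birational morphism $X\to T\times\prod_{i=1}^m\bbP^n$, I would phrase everything in terms of the combinatorial data on $T\times\prod_{i=1}^m\bbP^n$.

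Next I would identify the roof functions at a place $v$. Write $M$ for the character lattice of $\bbT$ and $\widetilde M = M\oplus\bigoplus_{i=1}^m M^{(i)}$ for that of $\bbT\times\bbT^m$, each $M^{(i)}\cong M$. The substitution $z_j\mapsto z_j w_{j,i}^{-1}$ realizes $g_i = K[\gamma_i](f_i)$ for the lattice map $\gamma_i\colon M\to\widetilde M$ sending $a$ to $(a;0,\dots,-a,\dots,0)$ with $-a$ in slot $i$; hence by Lemma~\ref{lemm:pushforward_of_ronkin} the roof function of $\sR_i$ at $v$ is $(\gamma_i)_*\rho_{i,v}^\vee$, where $\rho_{i,v}^\vee$ is the roof function of $R_i$. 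The roof function of $\pi_h^*\ov{D}_j$ is $\theta_{j,v}$, now supported on $\Delta_j\times\{0\}\subset\widetilde M_\bbR$, and the roof function of the Weil-metrized $\pi_i^*\ov{\sO(1)}$ is $\iota_{\Delta^n}^{(i)}$, the zero function on the standard simplex placed in the $i$-th factor $M^{(i)}_\bbR$ (the Weil metric on $\cO(1)$ over $\bbP^n$ is the canonical metric, whose roof function vanishes identically on its polytope). Theorem~\ref{thm:convex_formula_for_toric_intersection} then gives
\[
\text{(LHS)}=\sum_{v\in\M_K} n_v\,\MI_{\widetilde M}\bigl((\gamma_i)_*\rho_{i,v}^\vee,\ \theta_{j,v},\ \iota_{\Delta^n}^{(i)}\bigr),
\]
where $(\gamma_i)_*\rho_{i,v}^\vee$ ranges over $i=1,\dots,m$, $\theta_{j,v}$ over $j=0,\dots,n-m$, and each $\iota_{\Delta^n}^{(i)}$ occurs $n$ times, for $i=1,\dots,m$.

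The core step is to apply Lemma~\ref{lemm:convex_pushforward} once for each factor $M^{(i)}_\bbR$, peeling off the $n$ copies of $\iota_{\Delta^n}^{(i)}$; every application produces the factor $\MV_{M^{(i)}_\bbR}(\Delta^n,\dots,\Delta^n)=n!\,\vol(\Delta^n)=1$ and replaces each remaining function by its direct image under the projection away from $M^{(i)}_\bbR$. After the $m$ applications one is left with a mixed integral over $\widetilde M_\bbR/\bigoplus_i M^{(i)}_\bbR\cong M_\bbR$, and it remains to identify the surviving functions along the projection $\pi\colon\widetilde M_\bbR\to M_\bbR$: one has $\pi_*\theta_{j,v}=\theta_{j,v}$ (it is supported in $M_\bbR\times\{0\}$), and $\pi_*(\gamma_i)_*\rho_{i,v}^\vee=(\pi\circ\gamma_i)_*\rho_{i,v}^\vee=\rho_{i,v}^\vee$ by functoriality of the direct image of closed concave functions together with $\pi\circ\gamma_i=\mathrm{id}_M$. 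Hence $\text{(LHS)}=\sum_{v\in\M_K} n_v\,\MI_M(\rho_{1,v}^\vee,\dots,\rho_{m,v}^\vee,\theta_{0,v},\dots,\theta_{n-m,v})$, which is $\adeg(R_1\cdots R_m\,\ov{D}_0\cdots\ov{D}_{n-m}|T)$ by a second application of Theorem~\ref{thm:convex_formula_for_toric_intersection}, since the roof functions of $R_i$ and $\ov{D}_j$ are $\rho_{i,v}^\vee$ and $\theta_{j,v}$ respectively.

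I expect the only real obstacle to be organizational rather than conceptual: one must check at each stage of the iteration that the simplices being peeled off span a coordinate subspace complementary to everything already projected (so that the hypotheses of Lemma~\ref{lemm:convex_pushforward} hold and the mixed volume is exactly $1$), and that the direct images of the Ronkin roof functions and of the $\theta_{j,v}$ are as stated — all of which follow formally from the direct-sum decomposition $\widetilde M_\bbR=M_\bbR\oplus\bigoplus_i M^{(i)}_\bbR$ and the identity $\delta_*\gamma_*=(\delta\gamma)_*$. The one genuine (but standard) input is that the Weil-metrized $\ov{\sO(1)}$ on $\bbP^n$ has roof function equal to $0$ on the standard simplex.
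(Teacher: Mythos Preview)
Your proposal is correct and follows essentially the same approach as the paper: reduce to semipositive $\ov{D}_j$ by linearity, interpret both sides via Theorem~\ref{thm:convex_formula_for_toric_intersection}, then apply Lemma~\ref{lemm:convex_pushforward} to strip off the $nm$ copies of the indicator function of the unit simplex, using Lemma~\ref{lemm:pushforward_of_ronkin} to identify the pushforward of the Ronkin roof functions. The only cosmetic difference is that the paper applies Lemma~\ref{lemm:convex_pushforward} in one shot (with the $nm$ simplices spanning the full $\bigoplus_i M^{(i)}_\bbR$) rather than iterating factor by factor, but this is equivalent to your argument and yields the same mixed-volume factor of $1$.
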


\begin{proof}
    By linearity, we assume that $\ov{D}_0,\dots,\ov{D}_{n-m}$ are all semipositive. Then, we interpret the left hand side in a combinatorial manner as in \cite[Theorem 5.2.5]{burgos_philippon_sombra_toric_varieties}. We then apply Lemma \ref{lemm:convex_pushforward}. The occurring polytopes are $m$ times $n$ copies of the unit simplex, one set of copies for each factor in $\prod^m_{i=1} \bbP^n$. It is immediate to see that the pushforward of the roof functions of the $\pi_h^*\ov{D}_i$ yield precisely the roof functions of the $\ov{D}_i$. Similarly, the pushforward of the roof function of $\sR_i$ is the roof function of $R_i$ by Lemma \ref{lemm:pushforward_of_ronkin}.
\end{proof}

\begin{theorem}
    \label{thm:intersection}
    Let $f_1, \dots, f_m$ be Laurent polynomials in $n$ variables with coefficients in a number field $K$ and let $T$ be a proper toric variety with torus $\bbT=\bbG_m^n \subset T$. Suppose that $NP(f_i)$ define divisors on $T$. Denote by $V_i$ the hypersurface defined by $f_i$. Let $(\zeta_{1,j},\dots,\zeta_{m,j})_j$ be a generic sequence of small points in $\bbT^m$ with respect to the Weil height and $\ov{D}_0,\dots,\ov{D}_{n-m}$ be integrable toric Zhang divisors on $T$. Then,
    \[
        \lim_{j\to\infty} \adeg(\ov{D}_0,\dots,\ov{D}_{n-m}\mid \zeta_{1,j}V_1\cap\dots\cap\zeta_{m,j}V_m)=
        \adeg(R_1\dots R_m \ov{D}_0\dots\ov{D}_{n-m}|T).
    \]
\end{theorem}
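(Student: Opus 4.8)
The plan is to chain together the three results just proved, reducing the limit of arithmetic intersection numbers over the generic sequence to a purely toric intersection number on $T$.

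First I would invoke Lemma~\ref{lemma:the_limit_of_intersections_as_the_polarised_intersection} (with the $\zeta_{i,j}$ playing the role of the $u_{i,j}$): this identifies
\[
\lim_{j\to\infty} \adeg(\ov{D}_0,\dots,\ov{D}_{n-m}\mid \zeta_{1,j}V_1\cap\dots\cap\zeta_{m,j}V_m)
= \adeg(\pi_h^*\ov{D}_0\dots\pi_h^*\ov{D}_{n-m}\cdot\pi_1^*\ov{\sO(1)}^{n}\cdots\pi_m^*\ov{\sO(1)}^{n}\mid V),
\]
where $V\subset T\times\prod_{i=1}^m\bbP^n$ is the common vanishing locus of the $g_i = f_i(z_1 w_{1,i}^{-1},\dots,z_n w_{n,i}^{-1})$; here we use the hypothesis that $NP(f_i)$ define divisors on $T$ so that the toric machinery of Section~\ref{sec:convex_geometry} applies. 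Next I would pass to a suitable toric blow-up $X$ of $T\times\prod_{i=1}^m\bbP^n$ on which the Ronkin divisors $\sR_i$ of the $g_i$ are defined, with $\tilde V$ the common vanishing locus of the $g_i$ on $X$. Since passing to a blow-up does not change the intersection number of pullbacks and $\tilde V$ maps birationally onto $V$, the right-hand side above equals
\[
\adeg(\pi_h^*\ov{D}_0\dots\pi_h^*\ov{D}_{n-m}\cdot\pi_1^*\ov{\sO(1)}^{n}\cdots\pi_m^*\ov{\sO(1)}^{n}\mid\tilde V).
\]
Then I would apply Theorem~\ref{thm:ronkin_integral_vanishing} to replace the intersection on the subvariety $\tilde V$ by an intersection on all of $X$ against the extra factors $\sR_1\dots\sR_m$, obtaining
\[
\adeg(\sR_1\dots\sR_m\,\pi_h^*\ov{D}_0\dots\pi_h^*\ov{D}_{n-m}\cdot\pi_1^*\ov{\sO(1)}^{n}\cdots\pi_m^*\ov{\sO(1)}^{n}\mid X).
\]
Finally, Lemma~\ref{lemm:intersection_pushforward} computes this last quantity as $\adeg(R_1\dots R_m\,\ov{D}_0\dots\ov{D}_{n-m}\mid T)$, which is exactly the claimed right-hand side. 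Combining the four displayed equalities finishes the proof.

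Two points need a little care. The statement as written assumes the $\ov{D}_i$ are integrable toric Zhang divisors but allows $R_i$ to be only "already defined on $T$" or not; Lemma~\ref{lemm:intersection_pushforward} is stated under the assumption that the $R_i$ and $\ov{D}_i$ are toric, which is compatible here, and by linearity one reduces to the semipositive case so that the combinatorial formula of Burgos--Philippon--Sombra applies. The one genuine subtlety — which is really the heart of the argument and was already handled in Theorem~\ref{thm:ronkin_integral_vanishing} — is the Fubini argument in the non-Archimedean setting needed to evaluate the local integrals fibrewise over $\bbT^m$; I would simply cite that theorem rather than redo it. So in this final theorem the only "work" is bookkeeping: checking that the blow-up $X$ can be chosen to simultaneously resolve all the $\sR_i$ and that the pullback intersection numbers are insensitive to the choice of blow-up, which is standard (projection formula for proper birational morphisms). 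I expect no real obstacle; the proof is a three-line chaining of the preceding lemmas.
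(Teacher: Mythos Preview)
Your proposal is correct and matches the paper's proof essentially verbatim: the paper's argument is literally the one-line statement that the theorem is a combination of Lemma~\ref{lemma:the_limit_of_intersections_as_the_polarised_intersection}, Theorem~\ref{thm:ronkin_integral_vanishing}, and Lemma~\ref{lemm:intersection_pushforward}. Your additional remarks about the blow-up bookkeeping and the reduction to the semipositive case are accurate but more detailed than what the paper records.
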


\begin{proof}
    This is a combination of Lemma \ref{lemma:the_limit_of_intersections_as_the_polarised_intersection}, Theorem \ref{thm:ronkin_integral_vanishing} and Lemma \ref{lemm:intersection_pushforward}. 
\end{proof}

We finally prove Conjecture 6.4.4. in \cite{gualdi:tel-01931089}.

\begin{theorem}\label{thm:gualdi_conjecture}
    Let $f_1, \dots, f_m$ be Laurent polynomials in $n$ variables with coefficients in a number field $K$ and let $T$ be a proper toric variety with torus $\bbT=\bbG^n \subset T$. Denote by $V_i$ the hypersurface defined by $f_i$ and by $\rho_i$ its Ronkin function. Let $(\zeta_{1,j},\dots,\zeta_{m,j})_j$ be a generic sequence of small points in $\bbT^m$ for the Weil height and let $\ov{D}_0,\dots,\ov{D}_{n-m}$ be semipositive toric Zhang divisors on $T$  with associated local roof functions $\theta_{0,v}, \dots, \theta_{n-m,v}$. Then,
    \[
        \lim_{j\to\infty} \adeg(\ov{D}_0, \dots,\ov{D}_{n-m}\mid \zeta_{1,j}V_1\cap\dots\cap\zeta_{m,j}V_m)
    \]
    \[
        =\sum_{v\in \M_K} n_v \MI_M(\theta_{0,v}, \dots, \theta_{n-m,v}, \rho_1^\vee, \dots, \rho_m^\vee).
    \]
\end{theorem}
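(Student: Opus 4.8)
The plan is to obtain this as a fairly direct consequence of Theorem~\ref{thm:intersection} and the combinatorial formula for toric arithmetic intersection numbers (Theorem~\ref{thm:convex_formula_for_toric_intersection}), after a preliminary reduction that removes the hypothesis, present in Theorem~\ref{thm:intersection}, that the Newton polytopes $NP(f_i)$ define Cartier divisors on $T$.

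The first thing I would do is pass to a toric blow-up $\pi\colon T'\to T$ coming from a common refinement of the fan of $T$ and the normal fans of the $NP(f_i)$, so that each $NP(f_i)$ defines a Cartier divisor on $T'$. This reduction is harmless on both sides. On the left-hand side the torus is unchanged, hence so are the $V_i$, the translates $\zeta_{i,j}V_i$ and the points $\zeta_{i,j}$; moreover for $j$ large enough the cycle $\zeta_{1,j}V_1\cap\dots\cap\zeta_{m,j}V_m$ is a proper intersection lying in $\bbT$, over which $\pi$ is an isomorphism, so the projection formula for arithmetic intersection numbers shows that if $Z$ denotes such a cycle and $\widetilde{Z}$ its closure in $T'$ then $\adeg(\pi^*\ov{D}_0,\dots,\pi^*\ov{D}_{n-m}\mid \widetilde{Z})=\adeg(\ov{D}_0,\dots,\ov{D}_{n-m}\mid Z)$. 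On the right-hand side, $\pi^*\ov{D}_i$ is again a semipositive toric Zhang divisor, and its roof function at every place $v$ is still $\theta_{i,v}$, because pulling back a toric Cartier divisor along a toric morphism leaves the associated polytope unchanged and merely pulls back the local metric (hence, via the Legendre--Fenchel correspondence of Theorem~\ref{metrics_on_toric_divisors}, the roof function). So we may assume each $NP(f_i)$ defines a divisor on $T$.

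With this in place, apply Theorem~\ref{thm:intersection} (noting that semipositive divisors are in particular integrable) to rewrite the limit as the arithmetic intersection number $\adeg(R_1\cdots R_m\,\ov{D}_0\cdots\ov{D}_{n-m}\mid T)$, where $R_i$ is the Ronkin divisor of $f_i$. This is now an intersection of $n+1$ semipositive toric Zhang divisors on the $n$-dimensional proper toric variety $T$: $R_i$ is toric and its $v$-adic Green's function is the Ronkin function $\rho_{i,v}$, which is concave, differs boundedly from the support function of $NP(f_i)$, and equals it for all but finitely many $v$, so the family defines a Zhang metric. Theorem~\ref{thm:convex_formula_for_toric_intersection} then expresses this number as $\sum_{v\in\M_K} n_v\,\MI_M$ of the roof functions of the $R_{i,v}$ and the $\ov{D}_{i,v}$. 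The last step is to identify the roof function of $R_{i,v}$: by Theorem~\ref{metrics_on_toric_divisors} the roof function of a semipositive toric divisor is the Legendre--Fenchel dual of its concave Green's function, so the roof function of $R_{i,v}$ is $\rho_{i,v}^\vee$, which is the $\rho_i^\vee$ of the statement; reordering the arguments of the mixed integral by symmetry and multilinearity gives exactly the claimed identity. (For almost all $v$ the corresponding term vanishes, since $\theta_{i,v}$ and $\rho_{i,v}^\vee$ are then the identically-zero functions on their polytopes and the mixed integral of zero functions is zero, so the sum is finite.)

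I do not expect a genuine obstacle here: the analytic and convex-geometric content has already been isolated in Theorem~\ref{thm:intersection} --- which itself combines Lemma~\ref{lemma:the_limit_of_intersections_as_the_polarised_intersection}, the Ronkin-integral vanishing of Theorem~\ref{thm:ronkin_integral_vanishing}, and the pushforward Lemma~\ref{lemm:intersection_pushforward} --- together with Theorem~\ref{thm:convex_formula_for_toric_intersection}. The only points requiring care are the bookkeeping in the blow-up reduction (checking that both sides are invariant, in particular that roof functions are insensitive to toric blow-up) and matching the notation, i.e.\ verifying that the roof function of the Ronkin divisor at a place $v$ is precisely $\rho_{i,v}^\vee$.
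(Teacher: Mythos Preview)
Your proposal is correct and follows the paper's own proof: reduce via a toric refinement and the projection formula to the case where each $NP(f_i)$ defines a Cartier divisor, then invoke Theorem~\ref{thm:intersection} and Theorem~\ref{thm:convex_formula_for_toric_intersection}, identifying the roof function of $R_{i,v}$ with $\rho_{i,v}^\vee$. One small caveat in your reduction step: when $n-m>0$ the intersection cycle will in general meet the toric boundary rather than lie entirely in $\bbT$, but this does not matter---the projection formula applies because the cycle on $T'$ pushes forward to the cycle on $T$ under the proper birational map $\pi$.
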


\begin{proof}
        We apply the projection formula to restrict to the case, where the $NP(f_i)$ define divisors on $T$. Then, the conjecture follows from Theorem \ref{thm:intersection} and Theorem~\ref{thm:convex_formula_for_toric_intersection}.
\end{proof}

\appendix

\section{Mahler measures of complex polynomials}~\label{appendix:A}

In this appendix, we study some measures of complexity of complex polynomials. For a nonzero $P\in \mathbb{C}[X_1,\ldots,X_n]$, we define the logarithmic Mahler measure
\[
    m(P) = \int_{[0,1]^n} \log|P(e^{2i\pi t_1},\ldots,e^{2i\pi t_n})| dt_1\ldots dt_n,
\]
and the logarithmic Fubini-Study Mahler measure
\[
    m_{\mathbb{S}_n}(P) = \int_{\mathbb{S}_n} \log|P(z_1,\ldots,z_n)| d\eta_n(z_1,\ldots,z_n),
\]
where $\mathbb{S}_n$ is the unit sphere in $\mathbb{C}^n$ for the usual Euclidean norm, and $\eta_n$ is the spherical measure on $\mathbb{S}_n$, normalized so that $\eta_n(\mathbb{S}_n) = 1$.

In \cite{Lelong_Constantes_Universelles}, Pierre Lelong studied these two measures and gave a bound for the distance between them in terms of $n$ and the degree of $P$.

In this appendix, we prove an analogue of Lelong’s result for polynomials in $\overline{X}_1,\ldots,\overline{X}_n$, where each $\overline{X}_i$ is a tuple of abstract variables of length $m_i$. More precisely, we define the mixed Fubini-Study Mahler measure by

\[
    m_{\mathbb{S}_{m_1}\times\ldots\times\mathbb{S}_{m_n}}(P) = \int_{\mathbb{S}_{m_1}\times\ldots\mathbb{S}_{m_n}} \log|P(\overline{z}_1,\ldots,\overline{z}_n)| d\eta_{m_1}(\overline{z}_1)\wedge \ldots\wedge d\eta_{m_n}(\overline{z}_n).
\]

This measure has already been studied by Rémond in \cite{remond_multiprojective_diophantine_geometry}, in which he bounds $m_{\mathbb{S}_{m_1}\times\ldots\times\mathbb{S}_{m_n}}(P)$ in terms of the absolute values of the coefficients of $P$.
Our goal here is to prove a slightly different inequality, namely the following proposition.

\begin{proposition}
\label{proposition:comparison_fsmahler_norm_multi}
Let $P \in \mathbb{C}[\overline{X}_1,\ldots,\overline{X}_n]$ be a nonzero polynomial, where each $\overline{X}_i$ is a tuple of abstract variables of length $m_i$. For all $i\leq n$, let $d_i$ be the degree of $P$ in $\overline{X}_i$. Then,
\[
    \left|m_{\mathbb{S}_{m_1}\times\ldots\times \mathbb{S}_{m_n}} - \log\lVert P\rVert\right| \leqslant \sum_{i=1}^n d_i\left(\log(m_i+1)+\frac{1}{2}\sum_{k=1}^{m_i-1}\frac{1}{k}\right),
\]
where $\lVert P \rVert$ is the maximum absolute value of the coefficients of $P$.
\end{proposition}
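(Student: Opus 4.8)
The plan is to reduce the multi-variable statement to the single-variable case by iterating a one-sphere comparison, or — perhaps cleaner — to establish the inequality directly by comparing each sphere integral with the corresponding torus integral and then using the known relation between the torus Mahler measure and the sup-norm of coefficients. Concretely, I would proceed as follows. First, recall Lelong's theorem (from \cite{Lelong_Constantes_Universelles}) comparing $m_{\mathbb{S}_m}(Q)$ with the torus Mahler measure $m(Q)$ for a polynomial $Q$ homogeneous of degree $d$ in $m$ variables, and Jensen-type estimates comparing the torus Mahler measure $m(Q)$ with $\log\lVert Q\rVert$; both are of the shape ``error bounded by $d$ times a harmonic-sum expression''. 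The multiplicative/additive structure of all three quantities under substituting one block of variables at a time is what makes an induction on $n$ feasible.

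The key steps, in order, are: (1) State the $n=1$ case precisely, i.e. for $P\in\mathbb{C}[\overline{X}_1]$ of degree $d_1$ in the $m_1$ variables,
\[
    \bigl|m_{\mathbb{S}_{m_1}}(P)-\log\lVert P\rVert\bigr|\leqslant d_1\Bigl(\log(m_1+1)+\tfrac12\sum_{k=1}^{m_1-1}\tfrac1k\Bigr),
\]
deducing it by combining Lelong's $\mathbb{S}_m$-versus-torus estimate with the elementary bound $|m(P)-\log\lVert P\rVert|\leqslant\log\binom{m_1+d_1}{d_1}\le d_1\log(m_1+1)$ (really I need to be careful about which half of the bound carries the $\log(m_i+1)$ and which carries the harmonic sum, and homogenize $P$ if needed so that Lelong's statement applies). (2) Set up the induction: write $P\in\mathbb{C}[\overline{X}_1,\dots,\overline{X}_n]$ and integrate first over $\overline{z}_n\in\mathbb{S}_{m_n}$, treating $P$ as a polynomial in $\overline{X}_n$ with coefficients in $\mathbb{C}[\overline{X}_1,\dots,\overline{X}_{n-1}]$. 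Fubini gives
\[
    m_{\mathbb{S}_{m_1}\times\cdots\times\mathbb{S}_{m_n}}(P)=\int_{\mathbb{S}_{m_1}\times\cdots\times\mathbb{S}_{m_{n-1}}}\!\Bigl(\int_{\mathbb{S}_{m_n}}\log|P|\,d\eta_{m_n}\Bigr),
\]
and the inner integral is $m_{\mathbb{S}_{m_n}}$ of a one-block polynomial, which by step (1) differs from $\log\lVert P(\overline{z}_1,\dots,\overline{z}_{n-1},\cdot)\rVert_{\overline{X}_n}$ by at most the $n$-th summand. (3) The remaining term $\log\lVert P(\overline{z}_1,\dots,\overline{z}_{n-1},\cdot)\rVert_{\overline{X}_n}$ is, up to the ambiguity of which coefficient realizes the max, a function to which I can apply the inductive hypothesis; more precisely I would bound $\log\lVert P\rVert$ above and below by $m_{\mathbb{S}_{m_1}\times\cdots\times\mathbb{S}_{m_{n-1}}}$ of the coefficient polynomials plus the inductive error, and observe that the sup-norm of the coefficients of the coefficient-polynomials is just $\lVert P\rVert$. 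Telescoping the $n$ one-block errors yields exactly the claimed sum $\sum_{i=1}^n d_i\bigl(\log(m_i+1)+\tfrac12\sum_{k=1}^{m_i-1}\tfrac1k\bigr)$.

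The main obstacle I anticipate is bookkeeping the direction of the inequalities in the induction: $\log\lVert P(\dots,\overline{z}_n)\rVert$ depends on the point $(\overline{z}_1,\dots,\overline{z}_{n-1})$, and the monomial of $P$ in $\overline{X}_n$ achieving the maximum coefficient-norm may vary, so one cannot naively commute $\log\lVert\cdot\rVert$ past the outer integral. The fix is to bound both ways: for the upper bound on $m_{\mathbb{S}_{m_1}\times\cdots\times\mathbb{S}_{m_n}}(P)-\log\lVert P\rVert$ use, under the inner integral, $m_{\mathbb{S}_{m_n}}(P(\dots,\cdot))\le\log\lVert P(\dots,\cdot)\rVert_{\overline{X}_n}+(\text{term }n)$ and then $\lVert P(\dots,\cdot)\rVert_{\overline{X}_n}\le\max_{\alpha}|c_\alpha(\overline{z}_1,\dots,\overline{z}_{n-1})|$ where the $c_\alpha\in\mathbb{C}[\overline{X}_1,\dots,\overline{X}_{n-1}]$ are the coefficients, picking out a single $\alpha$ at the end and applying the inductive hypothesis to that $c_\alpha$; symmetrically for the lower bound, using that $m_{\mathbb{S}}$ is monotone and that for the specific monomial realizing $\lVert P\rVert$ globally, its coefficient satisfies $\lVert c_{\alpha_0}\rVert=\lVert P\rVert$. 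A secondary, routine obstacle is pinning down the exact constant in step (1): this is where homogenization and the precise form of Lelong's inequality and of Jensen's inequality for several variables must be invoked carefully to land on $\log(m_i+1)+\tfrac12\sum_{k=1}^{m_i-1}\tfrac1k$ rather than a weaker constant; I expect the $\tfrac12\sum 1/k$ to come from Lelong's $\mathbb{S}_m$-to-torus comparison and the $\log(m_i+1)$ from bounding $\binom{m_i+d_i}{d_i}\le(m_i+1)^{d_i}$.
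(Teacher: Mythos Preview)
Your approach is essentially the same as the paper's: induction on $n$, Fubini to peel off the last sphere, the $n=1$ case handled by combining Lelong's sphere-versus-torus comparison with the elementary bound $\binom{m+d}{d}\le (m+1)^d$, and for the inductive lower bound the key trick of fixing the single multi-index $\alpha_0$ realizing $\lVert P\rVert$ so that $\lVert c_{\alpha_0}\rVert=\lVert P\rVert$ and bounding $\log\lVert P(\overline{z}_1,\dots,\overline{z}_{n-1},\cdot)\rVert_{\overline{X}_n}\ge \log|c_{\alpha_0}(\overline{z}_1,\dots,\overline{z}_{n-1})|$ before integrating. The constant $\tfrac12\sum_{k=1}^{m_i-1}\tfrac1k$ indeed arises from Lelong's evaluation $\int_{\mathbb{S}_m}\log|z_1|\,d\eta_m=-\tfrac12\sum_{k=1}^{m-1}\tfrac1k$, exactly as you anticipate.

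One small correction: your description of the \emph{upper} bound is backwards. You write that for the upper bound one should ``pick out a single $\alpha$ at the end and apply the inductive hypothesis to that $c_\alpha$'', but picking a single coefficient gives $\log\max_\alpha|c_\alpha(\overline{z})|\ge \log|c_{\alpha_0}(\overline{z})|$, which is the wrong direction. In fact no induction is needed for the upper bound: on $\mathbb{S}_{m_1}\times\cdots\times\mathbb{S}_{m_n}$ every coordinate has modulus at most $1$, so $|P(\overline{z}_1,\dots,\overline{z}_n)|\le \lVert P\rVert\cdot\prod_i\binom{m_i+d_i}{m_i}\le \lVert P\rVert\cdot\prod_i(m_i+1)^{d_i}$ pointwise, and integrating gives $m_{\mathbb{S}_{m_1}\times\cdots\times\mathbb{S}_{m_n}}(P)\le \log\lVert P\rVert+\sum_i d_i\log(m_i+1)$, which is already within the claimed bound. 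The paper does exactly this. Your inductive argument is only needed, and only works, for the lower bound.
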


Let us start with the simpler case where each $m_i$ is equal to $1$. Let $P\in\mathbb{C}[X_1,\ldots,X_n]$ be a nonzero polynomial of degree $d$.

\begin{lemma}
\label{lemma:norm_smoothing}
Let
\[
    S(P) := \sup\{|P(z_1,\ldots,z_n)| \,:\,  (z_1,\ldots,z_n)\in\mathbb{C}^n \text{ and } |z_i|\leqslant 1 \text{ for all } i\}.
\]
Then,
\[
    \lVert P \rVert \leqslant S(P) \leqslant \binom{n+d}{n}\lVert P\rVert.
\]
\end{lemma}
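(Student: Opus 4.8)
The plan is to prove the two inequalities separately, both by elementary means. Write $P = \sum_{\alpha} c_\alpha X^\alpha$, where the sum runs over multi-indices $\alpha = (\alpha_1,\ldots,\alpha_n) \in \mathbb{N}^n$ with $|\alpha| := \alpha_1 + \cdots + \alpha_n \leqslant d$.

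For the lower bound $\lVert P \rVert \leqslant S(P)$, I would recover each coefficient via the Cauchy integral formula on the unit polytorus. Parametrizing $z_j = e^{2\pi i t_j}$, one has
\[
    c_\alpha = \int_{[0,1]^n} P(e^{2\pi i t_1},\ldots,e^{2\pi i t_n})\, e^{-2\pi i(\alpha_1 t_1 + \cdots + \alpha_n t_n)}\, dt_1\cdots dt_n,
\]
so that $|c_\alpha| \leqslant \sup_{|z_j|=1 \text{ for all } j} |P(z)| \leqslant S(P)$, the last inequality because the distinguished boundary $\{|z_j| = 1 \text{ for all } j\}$ is contained in the closed polydisc over which $S(P)$ is taken. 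Taking the supremum over $\alpha$ gives $\lVert P \rVert \leqslant S(P)$.

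For the upper bound, I would simply apply the triangle inequality: if $|z_j| \leqslant 1$ for all $j$, then $|z^\alpha| \leqslant 1$, hence
\[
    |P(z)| \leqslant \sum_\alpha |c_\alpha|\,|z^\alpha| \leqslant \sum_{|\alpha| \leqslant d} |c_\alpha| \leqslant \lVert P \rVert \cdot \#\{\alpha \in \mathbb{N}^n : |\alpha| \leqslant d\} = \binom{n+d}{n}\lVert P \rVert,
\]
using the standard count of monomials of degree at most $d$ in $n$ variables (introduce a slack variable to reduce to compositions of $d$ into $n+1$ parts). Taking the supremum over the polydisc yields $S(P) \leqslant \binom{n+d}{n}\lVert P \rVert$. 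There is no genuine obstacle here; the only point deserving care is the containment of the distinguished boundary in the closed polydisc, which is what makes the Cauchy estimate bounded by $S(P)$ rather than by a supremum over a larger region.
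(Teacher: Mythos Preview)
Your proof is correct. The upper bound is argued exactly as in the paper (triangle inequality plus the count of monomials of degree at most $d$). For the lower bound the paper takes a slightly different but equally elementary route: instead of your Cauchy/Fourier inversion estimate $|c_\alpha| \leqslant \sup_{|z_j|=1}|P(z)|$, it computes the $L^2$-norm on the torus, $\int_{[0,1]^n}|P(e^{2\pi i t})|^2\,dt = \sum_\alpha |c_\alpha|^2 \geqslant \lVert P\rVert^2$, and bounds this integral above by $S(P)^2$. Your linear Cauchy estimate is marginally more direct; the paper's Parseval argument has the small byproduct of identifying $S(P)$ as $\lim_m \lVert P^m\rVert^{1/m}$, noted in the remark following the lemma.
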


\begin{proof}
The rightmost inequality follows from the fact that a polynomial of degree $d$ has at most $\binom{n+d}{n}$ nonzero coefficients.

For the other inequality, consider the integral
\begin{equation*}
\begin{aligned}
    I &= \int_{[0,1]^n} |P(e^{2i\pi t_1},\ldots,e^{2i\pi t_n})|^2\, dt_1\ldots dt_n \\
    &= \int_{[0,1]^n} P(e^{2i\pi t_1},\ldots,e^{2i\pi t_n})\overline{P(e^{2i\pi t_1},\ldots,e^{2i\pi t_n})}\, dt_1\ldots dt_n \\
    &= \int_{[0,1]^n} \left(\sum_{k,l\in\mathbb{N}^n}a_k\overline{a_l}\exp\left(2i\pi(k_1-l_1)t_1+\ldots+2i\pi(k_n-l_n)t_n\right)\right)\, dt_1\ldots dt_n \\
    I &= \sum_{k,l\in\mathbb{N}^n}a_k\overline{a_l}\int_{[0,1]^n}\exp\left(2i\pi(k_1-l_1)t_1+\ldots+2i\pi(k_n-l_n)t_n\right)\, dt_1\ldots dt_n.
\end{aligned}
\end{equation*}

Now, for $k,l\in\mathbb{N}^n$ with $k\neq l$, there exists $1\leqslant r \leqslant n$ with $k_r-l_r\neq 0$. So,
\begin{multline*}
    \int_{[0,1]^n}\exp\left(2i\pi(k_1-l_1)t_1+\ldots+2i\pi(k_n-l_n)t_n\right)\, dt_1\ldots dt_n 
    \\ = \int_{[0,1]^{n-1}} \exp\left(\sum_{s\neq r}2i\pi(k_s-l_s)t_s\right)\left(\int_0^1 e^{2i\pi(k_r-l_r)t_r} dt_r\right)\prod_{s\neq r}dt_s 
    = 0,
\end{multline*}
and for $k=l$,
\[
    \int_{[0,1]^n}\exp\left(2i\pi(k_1-l_1)t_1+\ldots+2i\pi(k_n-l_n)t_n\right)\, dt_1\ldots dt_n = 1.
\]

So, finally
\[
    I = \sum_{k\in\mathbb{N}} |a_k|^2 \geqslant \lVert P\rVert^2.
\]
But we also have
\[
    I = \int_{[0,1]^n} |P(e^{2i\pi t_1},\ldots,e^{2i\pi t_n})|^2\, dt_1\ldots dt_n \leqslant S(P)^2.
\]
Hence, $\lVert P \rVert \leqslant S(P)$.
\end{proof}

\begin{remark}
In particular, Lemma \ref{lemma:norm_smoothing} implies that
\[
    S(P) = \lim_{m\rightarrow +\infty} \lVert P^m\rVert^{1/m}.
\]
\end{remark}

\begin{lemma}
\label{lemma:comparison_mahler_coeffs_1}
Assume that $n=1$, i.e. $P = \sum\limits_{k=0}^{d} a_k X^k \in\mathbb{C}[X]$. Then, for all $0\leqslant k\leqslant d$:
\[
    |a_k| \leqslant \binom{d}{k}\exp(m(P))
\]
\end{lemma}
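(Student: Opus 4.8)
The plan is to identify $\exp(m(P))$ with the Mahler measure of $P$ by Jensen's formula, and then estimate the coefficients through Vieta's formulas. Write $P(X)=a_d\prod_{j=1}^d(X-\alpha_j)$ with $a_d\neq 0$ since $\deg P=d$.

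First I would recall the classical computation $\int_0^1\log|e^{2i\pi t}-\alpha|\,dt=\log^+|\alpha|:=\max(\log|\alpha|,0)$, which follows from the mean value property of the harmonic function $z\mapsto\log|z|$ (the cases $|\alpha|<1$ and $|\alpha|>1$ being handled separately, and $|\alpha|=1$ by continuity). Summing over $j$ and using that $\log$ turns products into sums yields Jensen's formula
\[
    m(P)=\log|a_d|+\sum_{j=1}^d\log^+|\alpha_j|=\log\Bigl(|a_d|\prod_{|\alpha_j|\ge 1}|\alpha_j|\Bigr),
\]
so that $\exp(m(P))=|a_d|\prod_{|\alpha_j|\ge 1}|\alpha_j|$. (One may also just cite Jensen's formula here.)

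Next, by Vieta's formulas $a_k=(-1)^{d-k}a_d\,e_{d-k}(\alpha_1,\dots,\alpha_d)$, where $e_{d-k}$ is the $(d-k)$-th elementary symmetric polynomial, a sum of $\binom{d}{d-k}$ products of $d-k$ distinct roots. For any subset $S\subseteq\{1,\dots,d\}$ of size $d-k$ one has
\[
    \Bigl|\prod_{j\in S}\alpha_j\Bigr|=\prod_{j\in S}|\alpha_j|\le\prod_{\substack{j\in S\\|\alpha_j|\ge 1}}|\alpha_j|\le\prod_{|\alpha_j|\ge 1}|\alpha_j|,
\]
since dropping the factors of modulus $<1$ only increases the product, and then including all roots of modulus $\ge 1$ does the same. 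Hence $|e_{d-k}(\alpha)|\le\binom{d}{d-k}\prod_{|\alpha_j|\ge 1}|\alpha_j|$, and therefore
\[
    |a_k|=|a_d|\,|e_{d-k}(\alpha)|\le\binom{d}{d-k}\Bigl(|a_d|\prod_{|\alpha_j|\ge 1}|\alpha_j|\Bigr)=\binom{d}{k}\exp(m(P)),
\]
using $\binom{d}{d-k}=\binom{d}{k}$, which is the claimed bound.

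There is no genuine obstacle here: the only step that requires a little care is the evaluation $\int_0^1\log|e^{2i\pi t}-\alpha|\,dt=\log^+|\alpha|$ (equivalently, Jensen's formula), and everything else is routine bookkeeping with symmetric functions of the roots.
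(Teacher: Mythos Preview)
Your proof is correct and follows essentially the same approach as the paper: factor $P$ over $\mathbb{C}$, apply Jensen's formula to obtain $\exp(m(P))=|a_d|\prod_{|\alpha_j|\ge 1}|\alpha_j|$, and then bound $|a_k|$ via Vieta's formulas and the triangle inequality. The paper's argument is identical in structure, differing only in notation (it writes the leading coefficient as $\lambda$ and keeps track of $\deg P$ versus $d$ separately).
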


\begin{proof}
Since $\mathbb{C}$ is algebraically closed, we can write $P = \lambda \prod\limits_{i=1}^{\deg P} (X-\alpha_i)$, where $\lambda\in \mathbb{C}^{\times}$, $\alpha_1,\ldots,\alpha_{\deg P}\in \mathbb{C}$. Now, by Jensen's formula, we have for each $1\leqslant i\leqslant \deg P$,
\[
    \int_0^1 \log\left|e^{2i\pi t}-\alpha_i\right|dt = \max(0,\log|\alpha_i|).
\]
So, by summing,
\[
    m(P) = \log|\lambda| + \sum_{i=1}^{\deg{P}} \max(0,\log|\alpha_i|).
\]
Now, let $k\leqslant \deg P$. Then, the coefficient of $X^k$ in $P$ is equal to
\[
    a_k = (-1)^{\deg P - k}\lambda \sum_{I\subseteq \{1,\ldots,\deg P\}} \prod_{i\in I}\alpha_i.
\]
So, by the triangle inequality
\[
    |a_k| \leqslant |\lambda|\sum_{I\subseteq \{1,\ldots,\deg P\}}\prod_{i\in I}|\alpha_i| \leqslant  \binom{\deg P}{k}|\lambda|\prod_{i=1}^{\deg P}\max(1,\alpha_i) \leqslant \binom{d}{k}\exp(m(P).
\]
\end{proof}

\begin{lemma}
\label{lemma:comparison_mahler_coeffs}
Write $P = \sum\limits_{m\in\mathbb{N}^n} a_m X_1^{m_1}\ldots X_n^{m_n}$. Then, for every $m = (m_1,\ldots,m_n)\in\mathbb{N}^n$, we have
\[
    |a_m| \leqslant \binom{d}{m_1,\ldots,m_n}\exp(m(P)).
\]
\end{lemma}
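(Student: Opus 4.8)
The plan is to argue by induction on the number of variables $n$, reducing to the one-variable statement already established in Lemma~\ref{lemma:comparison_mahler_coeffs_1}, which serves as the base case. For the inductive step I would single out the last variable, writing
\[
  P = \sum_{k=0}^{e} P_k(X_1,\ldots,X_{n-1})\, X_n^k ,
\]
where $e = \deg_{X_n}P \le d$ and each $P_k$ is a polynomial in $n-1$ variables of total degree at most $d-k$; if $P_k=0$ there is nothing to prove for the corresponding coefficients, so I may assume $P_k\ne 0$.

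The first substantive step is to control $m(P_k)$ in terms of $m(P)$. Fixing $(x_1,\ldots,x_{n-1})$ on the unit torus, the one-variable polynomial $Q(X_n):=P(x_1,\ldots,x_{n-1},X_n)$ has degree at most $e$ and is nonzero for almost every such point, so Lemma~\ref{lemma:comparison_mahler_coeffs_1} applied to $Q$ gives
\[
  \log\lvert P_k(x_1,\ldots,x_{n-1})\rvert \;\le\; \log\binom{e}{k} + \int_0^1 \log\bigl\lvert P(x_1,\ldots,x_{n-1},e^{2i\pi t})\bigr\rvert\,dt .
\]
Substituting $x_j = e^{2i\pi t_j}$, integrating over $(t_1,\ldots,t_{n-1})\in[0,1]^{n-1}$ and applying Fubini to the integrable function $\log\lvert P\rvert$ yields $m(P_k) \le \log\binom{e}{k} + m(P) \le \log\binom{d}{k} + m(P)$, that is, $\exp(m(P_k)) \le \binom{d}{k}\exp(m(P))$.

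Then I would apply the induction hypothesis to $P_k$: for $m'=(m_1,\ldots,m_{n-1})$ the coefficient $a_{(m',k)}$ of $X_1^{m_1}\cdots X_{n-1}^{m_{n-1}}$ in $P_k$ satisfies $\lvert a_{(m',k)}\rvert \le \binom{\deg P_k}{m_1,\ldots,m_{n-1}}\exp(m(P_k))$. Combining this with the previous bound and the monotonicity $\binom{\deg P_k}{m_1,\ldots,m_{n-1}} \le \binom{d-k}{m_1,\ldots,m_{n-1}}$ (valid since $\deg P_k\le d-k$), one gets
\[
  \lvert a_{(m',k)}\rvert \;\le\; \binom{d-k}{m_1,\ldots,m_{n-1}}\binom{d}{k}\exp(m(P)).
\]
The proof concludes with the elementary factorial identity $\binom{d-k}{m_1,\ldots,m_{n-1}}\binom{d}{k} = \binom{d}{m_1,\ldots,m_{n-1},k}$, which is exactly the claimed bound for $a_m$ with $m=(m',k)$; for multi-indices $m$ with $\lvert m\rvert>d$ there is nothing to prove since $a_m=0$.

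The only genuinely delicate point will be the measure-theoretic bookkeeping in the Mahler-measure step: one must check that $\int_0^1\log\lvert P(x_1,\ldots,x_{n-1},e^{2i\pi t})\rvert\,dt$ is finite for almost every point of the torus — the exceptional set, on which $P$ restricts to the zero polynomial in $X_n$, is a proper algebraic (hence null) subset — so that the fibrewise inequality may legitimately be integrated and Fubini applied. Everything else is degree bookkeeping and the combinatorics of multinomial coefficients.
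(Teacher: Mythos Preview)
Your proposal is correct and follows essentially the same approach as the paper's proof: induction on $n$ with base case Lemma~\ref{lemma:comparison_mahler_coeffs_1}, singling out the last variable, applying the one-variable estimate fibrewise to bound $m(P_k)$ by $m(P)+\log\binom{d}{k}$, then invoking the induction hypothesis on $P_k$ and concluding via the multinomial identity $\binom{d}{k}\binom{d-k}{m_1,\ldots,m_{n-1}}=\binom{d}{m_1,\ldots,m_{n-1},k}$. Your version is in fact slightly more careful about the degree bookkeeping (using $e=\deg_{X_n}P$ and the monotonicity of $\binom{N}{m'}$ in $N$) and about the null set on which the fibrewise Mahler measure is undefined, both of which the paper leaves implicit.
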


\begin{proof}
We prove this inequality by induction on $n$. The $n=1$ case is the result of Lemma \ref{lemma:comparison_mahler_coeffs_1}, so we may assume $n\geqslant 2$. The result is also immediate if $a_m$ is zero, so assume it is not.
Write $P = \sum\limits_{k=0}^{d} P_k X_n^k$, where the $P_k$ are in $\mathbb{C}[X_1,\ldots,X_{n-1}]$.
Then, we may write
\[
    m(P) = \int_{[0,1]^{n-1}} m(P(e^{2i\pi t_1},\ldots,e^{2i\pi t_{n-1}},X)) dt_1\ldots dt_{n-1}.
\]
By Lemma \ref{lemma:comparison_mahler_coeffs_1}, we have for all $(t_1,\ldots,t_{n-1})\in [0,1]^{n-1}$, $|P_{m_n}(e^{2i\pi t_1},\ldots,e^{2i\pi t_{n-1}})| \leqslant \binom{d}{m_n} \exp(m(P(e^{2i\pi t_1},\ldots,e^{2i\pi t_{n-1}},X)))$. Since $P_{m_n}(e^{2i\pi t_1},\ldots,e^{2i\pi t_{n-1}})$ is nonzero for almost all $(t_1,\ldots,t_{n-1})$, we may write\\$m(P(e^{2i\pi t_1},\ldots,e^{2i\pi t_{n-1}},X)) \geqslant \log|P_{m_n}(e^{2i\pi t_1},\ldots,e^{2i\pi t_{n-1}})| - \log\binom{d}{m_n}$ and integrate, yielding
\[
    m(P) \geqslant m(P_{m_n}) - \log\binom{d}{m_n},
\]
i.e., $\exp(m(P_{m_n})) \leqslant \binom{d}{m_n} \exp(m(P))$
Since $P_{m_n}$ has degree at most $d-m_n$, the induction hypothesis gives $|a_m| \leqslant\binom{d-m_n}{m_1,\ldots,m_{n-1}}\exp(m(P_{m_n}))$. Since $\binom{d}{m_n}\binom{d-m_n}{m_1,\ldots,m_{n-1}} = \binom{d}{m_1,\ldots,m_n}$, this concludes.
\end{proof}

\begin{corollary}
\label{corollary:comparison_mahler_norm}
\[
    |m(P) - \log\lVert P\rVert| \leqslant d\log(n+1)
\]
\end{corollary}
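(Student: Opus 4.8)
The plan is to establish the two one-sided inequalities $m(P) - \log\lVert P\rVert \leqslant d\log(n+1)$ and $\log\lVert P\rVert - m(P) \leqslant d\log(n+1)$ separately, each deduced quickly from one of the lemmas already proved together with a crude combinatorial estimate.

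For the first inequality I would bound $m(P)$ above by $\log S(P)$: since $\log\lvert P(e^{2i\pi t_1},\ldots,e^{2i\pi t_n})\rvert \leqslant \log S(P)$ for every $(t_1,\ldots,t_n)\in[0,1]^n$, integrating over the unit cube (which has total measure $1$) gives $m(P)\leqslant\log S(P)$. By Lemma~\ref{lemma:norm_smoothing} we have $S(P)\leqslant\binom{n+d}{n}\lVert P\rVert$, hence $m(P)-\log\lVert P\rVert\leqslant\log\binom{n+d}{n}$. It then remains to invoke the elementary estimate $\binom{n+d}{n}=\prod_{i=1}^{d}\frac{n+i}{i}\leqslant(n+1)^d$, valid since $\frac{n+i}{i}=1+\frac{n}{i}\leqslant n+1$ for each $i\geqslant1$; this closes this direction.

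For the second inequality I would apply Lemma~\ref{lemma:comparison_mahler_coeffs}, which gives $\lvert a_m\rvert\leqslant\binom{d}{m_1,\ldots,m_n}\exp(m(P))$ for every multi-index $m$. Here the multinomial coefficient is understood with the slack entry $d-\lvert m\rvert$, i.e.\ $\binom{d}{m_1,\ldots,m_n}=\frac{d!}{m_1!\cdots m_n!\,(d-\lvert m\rvert)!}$, which is exactly the convention under which the lemma was stated. Summing these coefficients over all multi-indices $m\in\mathbb{N}^n$ of weight at most $d$ gives $(n+1)^d$ by the multinomial theorem, so each one is at most $(n+1)^d$. Taking the maximum over $m$ yields $\lVert P\rVert\leqslant(n+1)^d\exp(m(P))$, that is $\log\lVert P\rVert-m(P)\leqslant d\log(n+1)$. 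Combining the two bounds proves the corollary.

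I do not expect a genuine obstacle here: both halves follow immediately from the preceding lemmas plus the two uniform bounds $\binom{n+d}{n}\leqslant(n+1)^d$ and $\binom{d}{m_1,\ldots,m_n}\leqslant(n+1)^d$. The only point that requires a moment of care is keeping the multinomial-coefficient convention consistent with Lemma~\ref{lemma:comparison_mahler_coeffs} when $\lvert m\rvert<d$, so that the summation identity producing $(n+1)^d$ is literally correct.
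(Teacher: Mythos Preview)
Your proof is correct and follows essentially the same route as the paper: one direction via $m(P)\leqslant\log S(P)$ together with Lemma~\ref{lemma:norm_smoothing} and the bound $\binom{n+d}{n}\leqslant(n+1)^d$, the other via Lemma~\ref{lemma:comparison_mahler_coeffs} and the bound $\binom{d}{m_1,\ldots,m_n}\leqslant(n+1)^d$. Your explicit justifications for the two combinatorial estimates (the product expansion and the multinomial-theorem sum) are exactly what the paper summarizes as ``a basic counting argument''.
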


\begin{proof}
First, it is clear that $m(P) \leqslant \log(S(P)) \leqslant \log\lVert P\rVert + \log\binom{n+d}{n}$ by Lemma \ref{lemma:norm_smoothing}. A basic counting argument shows that $\binom{n+d}{n}\leqslant (n+1)^d$, hence $m(P) \leqslant \log\lVert P\rVert + d\log(n+1)$.

For the other direction, write $P = \sum\limits_{m\in\mathbb{N}^n} a_m X_1^{m_1}\ldots X_n^{m_n}$. By Lemma \ref{lemma:comparison_mahler_coeffs}, we have for all $m \in\mathbb{N}^n$,
\[
    |a_m| \leqslant \binom{d}{m_1,\ldots,m_n}\exp(m(P))
\]
A basic counting argument shows that $\binom{d}{m_1,\ldots,m_n}\leqslant (n+1)^d$, hence by taking the maximum over all coefficients,
\[
    \lVert P\rVert \leqslant (n+1)^d\exp(m(P)),
\]
i.e. $\log\lVert P\rVert \leqslant m(P) + d\log(n+1)$, which concludes the proof.
\end{proof}

\begin{remark}
If $P\in \mathbb{C}[X_1,\ldots,X_n]$ is homogeneous, we may replace $n+1$ by $n$ in the above inequality. Indeed, evaluating in $X_n = 1$ does not change $m(P)$ and $\lVert P\rVert$, so we may replace $P$ with a polynomial in $n-1$ variables.
\end{remark}

\begin{lemma}
\label{lemma:comparison_fsmahler_norm}
\[
    \left|m_{\mathbb{S}_n}(P) - \log\lVert P\rVert\right| \leqslant 2d\log(n+1).
\]
\end{lemma}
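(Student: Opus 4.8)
The plan is to deduce the estimate from Corollary~\ref{corollary:comparison_mahler_norm} by comparing $m_{\mathbb{S}_n}(P)$ with torus (polydisk) Mahler measures of rescaled polynomials, via the disintegration of the uniform measure on $\mathbb{S}_n$ over a ``radial simplex''. For the upper bound, observe that every point of $\mathbb{S}_n$ has all coordinates of absolute value at most $1$, so $m_{\mathbb{S}_n}(P)\le\log S(P)$; combining this with Lemma~\ref{lemma:norm_smoothing} and $\binom{n+d}{n}\le(n+1)^d$ already gives $m_{\mathbb{S}_n}(P)\le\log\lVert P\rVert+d\log(n+1)$.

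For the lower bound I would use the classical fact that, writing $z_j=r_je^{2\pi i t_j}$ with $r=(r_1,\dots,r_n)$ in the simplex $\Delta_n=\{r_j\ge 0,\ \sum_j r_j^2=1\}$ and $t\in[0,1]^n$, the spherical measure $\eta_n$ decomposes as the product of Lebesgue measure $dt$ on $[0,1]^n$ and the pushforward measure $\mu$ of $\eta_n$ under $z\mapsto(|z_1|,\dots,|z_n|)$. Consequently
\[
    m_{\mathbb{S}_n}(P)=\int_{\Delta_n}m\bigl(P(r_1X_1,\dots,r_nX_n)\bigr)\,d\mu(r),
\]
since for fixed $r$ the inner integral over $[0,1]^n$ is exactly $m(P_r)$ with $P_r:=P(r_1X_1,\dots,r_nX_n)$. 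For $\mu$-almost every $r$ all the $r_j$ are positive, so $P_r$ is a nonzero polynomial of degree $\le d$, and Corollary~\ref{corollary:comparison_mahler_norm} gives $m(P_r)\ge\log\lVert P_r\rVert-d\log(n+1)$. Choosing a multi-index $m_0$ with $|a_{m_0}|=\lVert P\rVert$ and keeping only that monomial in $P_r$ yields $\lVert P_r\rVert\ge\lVert P\rVert\prod_j r_j^{(m_0)_j}$, hence $\log\lVert P_r\rVert\ge\log\lVert P\rVert+\sum_j(m_0)_j\log r_j$. Integrating over $\Delta_n$, using that by symmetry $c_n:=-\int_{\Delta_n}\log r_j\,d\mu(r)=-\int_{\mathbb{S}_n}\log|z_1|\,d\eta_n$ is independent of $j$, and that $\sum_j(m_0)_j\le d$, one obtains $m_{\mathbb{S}_n}(P)\ge\log\lVert P\rVert-dc_n-d\log(n+1)$.

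It then remains to check $c_n\le\log(n+1)$. Under $\eta_n$ the variable $|z_1|^2$ is $\mathrm{Beta}(1,n-1)$-distributed, and a standard computation gives $c_n=\tfrac12\sum_{k=1}^{n-1}\tfrac1k$ (so $c_1=0$); since $\sum_{k=1}^{n-1}\tfrac1k\le 1+\log(n-1)$ for $n\ge 2$ and $1+\log(n-1)\le 2\log(n+1)$ --- equivalently $e(n-1)\le(n+1)^2$, which holds for all $n\ge1$ --- we get $c_n\le\log(n+1)$. Combining with the upper bound yields $|m_{\mathbb{S}_n}(P)-\log\lVert P\rVert|\le 2d\log(n+1)$. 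I expect the main obstacle to be the bookkeeping around the disintegration of $\eta_n$ over $\Delta_n$ and the identification of its radial marginal with the uniform (Dirichlet) measure on the simplex, whose logarithmic moments are classical; everything else is a routine assembly of the lemmas already proved together with elementary inequalities, and one could even avoid evaluating $c_n$ exactly, using only that it is a finite constant $\le\log(n+1)$.
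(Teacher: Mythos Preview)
Your proof is correct and follows essentially the same route as the paper: the paper writes $m_{\mathbb{S}_n}(P)=\int_{\mathbb{S}_n}m\bigl(P(z_1X_1,\dots,z_nX_n)\bigr)\,d\eta_n(z)$ via Fubini after averaging over the torus action on $\mathbb{S}_n$, which is exactly your radial-simplex disintegration since $m\bigl(P(z_1X_1,\dots,z_nX_n)\bigr)$ depends only on $(|z_1|,\dots,|z_n|)$. The remaining steps --- applying Corollary~\ref{corollary:comparison_mahler_norm} fibrewise, bounding $\log\lVert P_r\rVert$ from below by the monomial realizing $\lVert P\rVert$, and using $-\int_{\mathbb{S}_n}\log|z_1|\,d\eta_n=\tfrac12\sum_{k=1}^{n-1}\tfrac1k\le\log(n+1)$ --- are identical to the paper's.
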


\begin{proof}
It is clear from the definition that $m_{\mathbb{S}_n}(P(e^{2i\pi t_1}X_1,\ldots,e^{2i\pi t_n}X_n)) = m_{\mathbb{S}_n}(P)$ for all $t_1,\ldots,t_n\in [0,1]$. So, we may write
\[
    m_{\mathbb{S}_n}(P) = \int_{[0,1]^n} m_{\mathbb{S}_n}(P(e^{2i\pi t_1}X_1,\ldots,e^{2i\pi t_n}X_n)) dt_1\ldots dt_n.
\]
By Fubini, this is equal to
\[
    \int_{\mathbb{S}_n} \left(\int_{[0,1]^n} (P(z_1e^{2i\pi t_1},\ldots,z_ne^{2i\pi t_n})) dt_1\ldots dt_n\right) d\eta_n(\overline{z}) = \int_{\mathbb{S}_n} m(P(z_1X_1,\ldots,z_nX_n)) d\eta_n(\overline{z})
\]
But, by Lemma \ref{corollary:comparison_mahler_norm}, we have for all $\overline{z}\in\mathbb{S}_n$:
\[
    \left|m(P(z_1X_1,\ldots,z_nX_n)) - \log\lVert P\rVert\right| \leqslant d\log(n+1)
\]
So, by integrating:
\[
    \left| m_{\mathbb{S}_n}(P) - \int_{\mathbb{S}_n} \log\lVert P(z_1X_1,\ldots,z_nX_n)\rVert d\eta_n(\overline{z})\right| \leqslant d\log(n+1)
\]

Moreover, it is clear that for all $\overline{z}\in\mathbb{S}_n$, $\lVert P(z_1X_1,\ldots,z_nX_n)\rVert \leqslant \lVert P \rVert$, therefore $\int\limits_{\mathbb{S}_n} \log\lVert P(z_1X_1,\ldots,z_nX_n)\rVert d\eta_n(\overline{z}) \leqslant \log\lVert P\rVert$.

On the other hand, if $P$ is written as $\sum_{s\in\mathbb{N}^n} a_s X_1^{s_1}\ldots X_n^{s_n}$, we have for all $s$ such that $a_s\neq 0$,
\begin{equation*}
    \begin{aligned}
    \int_{\mathbb{S}_n} \log\lVert P(z_1X_1,\ldots,z_nX_n)\rVert d\eta_n(\overline{z})
    & = \int_{\mathbb{S}_n} \left(\max_s \log|a_sz_1^{s_1}\ldots z_n^{s_n}|\right) d\eta_n(\overline{z})\\
    & \geqslant \max\limits_s \int_{\mathbb{S}_n} \log|a_sz_1^{s_1}\ldots z_n^{s_n}| d\eta_n(\overline{z})\\
    & = \max\limits_s \left(\log|a_s| + \sum\limits_{i=1}^n s_i \int_{\mathbb{S}_n} \log|z_i| d\eta_n(\overline{z}) \right)\\
    & = \max_s \left(\log|a_s| + |s|\int_{\mathbb{S}_n} \log|z_1| d\eta_n(\overline{z})\right)\\
    \int_{\mathbb{S}_n} \log\lVert P(z_1X_1,\ldots,z_nX_n)\rVert d\eta_n(\overline{z}) & \geqslant \log\lVert P\rVert + d\int_{\mathbb{S}_n} \log|z_1| d\eta_n(\overline{z}).
    \end{aligned}
\end{equation*}
It remains to show that $\int\limits_{\mathbb{S}_n} \log|z_1| d\eta_n(\overline{z}) \geq - \log(n+1)$. In fact, we even know from \cite[Equation 2.28]{Lelong_Constantes_Universelles} that this integral evaluates to $-\frac{1}{2}\sum\limits_{i=1}^{n-1}\frac{1}{k}$.
\end{proof}

Now, we move on to the mixed case. Fix a nonzero polynomial $P\in \mathbb{C}[\overline{X}_1,\ldots,\overline{X}_n]$ and denote $d_i := \deg_{\overline{X}_i}P$. 
Our goal is to adapt the result of Lemma \ref{lemma:comparison_fsmahler_norm} and find a bound for the distance between this measure and $\log\lVert P\rVert$ in terms of the $d_i$.

\begin{lemma}
\label{lemma:norms_smoothing_multi}
Again, let
\[
    S(P) := \sup\{|P(\overline{z_1},\ldots,\overline{z}_n)| (\overline{z}_1,\ldots,\overline{z}_n)\in\mathbb{C}^{m_1+\ldots+m_n} \text{ and } |z_{i,j}| \leqslant 1 \text{ for all } i,j\}.
\]
Then,
\[
    \lVert P\rVert \leqslant S(P) \leqslant \left(\prod_{i=1}^n \binom{m_i+d_i}{m_i}\right)\lVert P\rVert
\]
\end{lemma}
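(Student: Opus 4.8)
The plan is to imitate the proof of Lemma~\ref{lemma:norm_smoothing} essentially verbatim, the only new ingredient being a count of the monomials that can occur in $P$.

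For the right-hand inequality I would first observe that a monomial in the block $\overline{X}_i$ occurring in $P$ has degree at most $d_i$ in those variables, and that the number of monomials in $m_i$ variables of degree at most $d_i$ equals $\binom{m_i+d_i}{m_i}$ (equivalently, the number of monomials of degree exactly $d_i$ in $m_i+1$ variables). Since a monomial in $\overline{X}_1,\dots,\overline{X}_n$ is uniquely a product of one monomial from each block, $P$ has at most $\prod_{i=1}^n\binom{m_i+d_i}{m_i}$ nonzero coefficients. Evaluating at a point with all $|z_{i,j}|\le 1$ makes every monomial have absolute value $\le 1$, so the triangle inequality gives $|P(\overline{z}_1,\dots,\overline{z}_n)|\le\bigl(\prod_i\binom{m_i+d_i}{m_i}\bigr)\lVert P\rVert$, whence the claimed bound on $S(P)$.

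For the left-hand inequality I would integrate $|P|^2$ over the real torus. Writing $P=\sum_s a_s\prod_{i,j}z_{i,j}^{s_{i,j}}$ and setting each $z_{i,j}=e^{2i\pi t_{i,j}}$, orthogonality of the characters $t\mapsto e^{2i\pi\langle k,t\rangle}$ on $[0,1]^{m_1+\dots+m_n}$ kills all cross terms, so
\[
\int_{[0,1]^{m_1+\dots+m_n}}\bigl|P(e^{2i\pi t_{1,1}},\dots)\bigr|^2\,dt=\sum_s|a_s|^2\ge\lVert P\rVert^2.
\]
On the other hand every point of integration has all coordinates of modulus $1$, hence in particular $\le 1$, so the same integral is bounded above by $S(P)^2$; combining the two gives $\lVert P\rVert\le S(P)$.

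I do not expect any genuine obstacle here; the only step requiring a line of justification is the monomial count, and even that could be replaced by the cruder estimate used subsequently ($\binom{m_i+d_i}{m_i}\le(m_i+1)^{d_i}$) if one only needs the consequences drawn later. The one point to phrase carefully is that the integral identity is taken over the correct product torus $(\mathbb{R}/\mathbb{Z})^{m_1+\dots+m_n}$, so that the orthogonality computation is the exact multivariate analogue of the one in Lemma~\ref{lemma:norm_smoothing} and no new analytic input is needed.
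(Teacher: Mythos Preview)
Your proposal is correct and follows exactly the paper's approach: the paper's proof simply remarks that the number of nonzero coefficients of $P$ is at most $\prod_{i=1}^n\binom{m_i+d_i}{m_i}$ and then refers back to the argument of Lemma~\ref{lemma:norm_smoothing}. You have merely written out that argument in full, including the orthogonality computation on the product torus, so nothing needs to change.
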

\begin{proof}
This follows from the fact that the number of nonzero coefficients of $P$ is at most $\prod\limits_{i=1}^n \binom{m_i+d_i}{m_i}$, by the same argument as in the proof of Lemma \ref{lemma:norm_smoothing}.
\end{proof}

\begin{lemma}
\label{lemma:comparison_mahler_coeffs_multi}
Write
\[
P = \sum_{(k_1,\ldots,k_n)\in\mathbb{N}^{m_1}\times\ldots\times\mathbb{N}^{m_n}} a_{k_1,\ldots,k_n} \overline{X}_1^{k_1}\ldots \overline{X}_n^{k_n},
\]
where $\overline{X}_i^{k_i} := \prod\limits_{j=1}^{m_i} X_{i,j}^{k_{i,j}}$. Then, for every $k = (k_1,\ldots,k_n)\in\mathbb{N}^{m_1}\times\ldots\times \mathbb{N}^{m_n}$, we have
\[
    |a_{k_1,\ldots,k_n}| \leqslant \left(\prod_{i=1}^n \binom{d_i}{k_{i,1},\ldots,k_{i,m_i}}\right)\exp(m(P)).
\]
\end{lemma}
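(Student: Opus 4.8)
The plan is to prove Lemma~\ref{lemma:comparison_mahler_coeffs_multi} by induction on the number $n$ of variable-blocks, bootstrapping off the single-block case already established in Lemma~\ref{lemma:comparison_mahler_coeffs}. Throughout I would use the elementary monotonicity fact that for fixed $j_1,\ldots,j_r\in\mathbb{N}$ the multinomial coefficient $\binom{d}{j_1,\ldots,j_r}$ is nondecreasing in $d\geqslant j_1+\cdots+j_r$, so that Lemma~\ref{lemma:comparison_mahler_coeffs} (stated with $d=\deg$) remains valid with $d$ replaced by any upper bound for the degree. The base case $n=1$ is exactly Lemma~\ref{lemma:comparison_mahler_coeffs}, with the roles of the ``$n$'' and ``$d$'' there played by $m_1$ and $d_1$.

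For the inductive step I would assume the statement for $n-1$ blocks, write $P=\sum_{k_n\in\mathbb{N}^{m_n}}P_{k_n}\,\overline{X}_n^{k_n}$ with $P_{k_n}\in\mathbb{C}[\overline{X}_1,\ldots,\overline{X}_{n-1}]$, and fix $k=(k_1,\ldots,k_n)$; the claim is trivial if $a_{k_1,\ldots,k_n}=0$, so I may assume this coefficient, hence $P_{k_n}$, is nonzero. For $t$ in $[0,1]^{m_1+\cdots+m_{n-1}}$ let $z(t)$ be the corresponding point on the product of unit circles. Specialising the first $n-1$ blocks, $P(z(t),\overline{X}_n)$ is a polynomial in the $m_n$ variables $\overline{X}_n$ of total degree at most $d_n=\deg_{\overline{X}_n}P$, whose coefficient of $\overline{X}_n^{k_n}$ is $P_{k_n}(z(t))$. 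Applying Lemma~\ref{lemma:comparison_mahler_coeffs} to it gives
\[
|P_{k_n}(z(t))|\leqslant\binom{d_n}{k_{n,1},\ldots,k_{n,m_n}}\exp\bigl(m(P(z(t),\overline{X}_n))\bigr).
\]
Since $P_{k_n}\neq 0$, it is nonzero for almost every $t$; taking logarithms and integrating, and using that the iterated Mahler measure equals the Mahler measure (the Fubini argument already used in the proof of Lemma~\ref{lemma:comparison_mahler_coeffs}), I obtain
\[
m(P_{k_n})-\log\binom{d_n}{k_{n,1},\ldots,k_{n,m_n}}\leqslant\int_{[0,1]^{m_1+\cdots+m_{n-1}}}m(P(z(t),\overline{X}_n))\,dt=m(P),
\]
that is, $\exp(m(P_{k_n}))\leqslant\binom{d_n}{k_{n,1},\ldots,k_{n,m_n}}\exp(m(P))$.

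To finish, I note $\deg_{\overline{X}_i}P_{k_n}\leqslant d_i$ for $i=1,\ldots,n-1$, so the inductive hypothesis (with the monotonicity above used to pass from $\deg_{\overline{X}_i}P_{k_n}$ up to $d_i$) yields
\[
|a_{k_1,\ldots,k_n}|\leqslant\Bigl(\prod_{i=1}^{n-1}\binom{d_i}{k_{i,1},\ldots,k_{i,m_i}}\Bigr)\exp(m(P_{k_n}))\leqslant\Bigl(\prod_{i=1}^{n}\binom{d_i}{k_{i,1},\ldots,k_{i,m_i}}\Bigr)\exp(m(P)),
\]
which is the asserted bound.

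I do not expect a serious obstacle: the argument is a direct blockwise iteration of Lemma~\ref{lemma:comparison_mahler_coeffs}, and the only points needing (routine) care are exactly those already handled in that lemma's proof — namely that the locus where $P_{k_n}(z(t))=0$ is a null set (true since $P_{k_n}\neq 0$), and that $t\mapsto m(P(z(t),\overline{X}_n))$ is integrable with integral $m(P)$, the standard ``Fubini for Mahler measures'' fact.
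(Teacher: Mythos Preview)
Your proposal is correct and takes essentially the same approach as the paper, which merely records that the proof is a straightforward induction on $n$ based on Lemma~\ref{lemma:comparison_mahler_coeffs}. You have filled in precisely the details one would expect, including the monotonicity of the multinomial coefficients needed to pass from the actual degree of a specialization to the upper bound $d_i$.
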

\begin{proof}
The proof is a straightforward induction on $n$, based on Lemma \ref{lemma:comparison_mahler_coeffs}.
\end{proof}

\begin{corollary}
\label{corollary:comparison_mahler_norm_multi}
\[
    |m(P) - \log\lVert P \rVert |\leqslant \sum_{i=1}^n d_i\log(m_i + 1)
\]
\end{corollary}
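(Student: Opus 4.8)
The plan is to mimic the proof of Corollary~\ref{corollary:comparison_mahler_norm} almost verbatim, replacing the single-variable smoothing and coefficient bounds by their mixed analogues from Lemmas~\ref{lemma:norms_smoothing_multi} and~\ref{lemma:comparison_mahler_coeffs_multi}, and then simplifying the resulting combinatorial constants with two elementary estimates on binomial and multinomial coefficients.

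For the upper bound $m(P) \leqslant \log\lVert P\rVert + \sum_{i=1}^n d_i\log(m_i+1)$, I would first observe that $m(P)$ is the integral of $\log|P|$ over the torus, where $|P| \leqslant S(P)$, so $m(P) \leqslant \log S(P)$. By Lemma~\ref{lemma:norms_smoothing_multi} this gives $m(P) \leqslant \log\lVert P\rVert + \sum_{i=1}^n \log\binom{m_i+d_i}{m_i}$, and then I would use $\binom{m_i+d_i}{m_i} = \prod_{j=1}^{d_i}\frac{m_i+j}{j} \leqslant (m_i+1)^{d_i}$ to conclude. For the lower bound $\log\lVert P\rVert \leqslant m(P) + \sum_{i=1}^n d_i\log(m_i+1)$, I would write $P = \sum_{k} a_{k_1,\ldots,k_n}\,\overline{X}_1^{k_1}\cdots\overline{X}_n^{k_n}$; by Lemma~\ref{lemma:comparison_mahler_coeffs_multi} each coefficient satisfies $|a_{k_1,\ldots,k_n}| \leqslant \left(\prod_{i=1}^n\binom{d_i}{k_{i,1},\ldots,k_{i,m_i}}\right)\exp(m(P))$, and each multinomial coefficient $\binom{d_i}{k_{i,1},\ldots,k_{i,m_i}}$ is a single term in the multinomial expansion of $(m_i+1)^{d_i}$ (with the extra part the slack $d_i - \sum_j k_{i,j}$), hence at most $(m_i+1)^{d_i}$. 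Taking the maximum over $k$ yields $\lVert P\rVert \leqslant \left(\prod_{i=1}^n (m_i+1)^{d_i}\right)\exp(m(P))$, and taking logarithms finishes this direction. Combining the two inequalities proves the corollary.

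I do not expect a genuine obstacle here: the argument is formally identical to the $m_i=1$ case already treated, and the only points needing (entirely routine) care are the two combinatorial estimates above. If one instead wanted the sharper constant $\tfrac12\sum_k \tfrac1k$ that appears in Proposition~\ref{proposition:comparison_fsmahler_norm_multi}, one would have to run the Jensen-formula bookkeeping of Lemma~\ref{lemma:comparison_mahler_coeffs_1} more carefully, but for the present corollary the crude bound $d_i\log(m_i+1)$ is all that is needed.
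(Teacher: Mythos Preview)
Your proposal is correct and follows essentially the same approach as the paper: both bound $m(P)$ above via Lemma~\ref{lemma:norms_smoothing_multi} together with $\binom{m_i+d_i}{m_i}\leqslant (m_i+1)^{d_i}$, and bound $\log\lVert P\rVert$ above via Lemma~\ref{lemma:comparison_mahler_coeffs_multi} together with the multinomial estimate $\binom{d_i}{k_{i,1},\ldots,k_{i,m_i}}\leqslant (m_i+1)^{d_i}$.
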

\begin{proof}
First, it is clear that $m(P) \leqslant \log(S(P)) \leqslant \log\lVert P\rVert + \sum_{i=1}^n\log\binom{m_i+d_i}{m_i}$ by Lemma \ref{lemma:norm_smoothing}. As in the proof of Corollary \ref{corollary:comparison_mahler_norm}, a counting argument shows that $\binom{m_i+d_i}{m_i}\leqslant (m_i+1)^{d_i}$, so $m(P)\leqslant \log\lVert P\rVert + \sum_{i=1}^n d_i\log(m_i+1)$.

Then, it follows from Lemma \ref{lemma:comparison_mahler_coeffs_multi} and the fact that every multinomial coefficient $\binom{d_i}{k_1,\ldots,k_{m_i}}$ is smaller or equal to $(m_i+1)^{d_i}$, that
\[
    \lVert P\rVert \leqslant \left(\prod_{i=1}^n(m_i+1)^{d_i}\right)\exp(m(P)),
\]
i.e. $\log\lVert P\rVert \leqslant m(P) + \sum_{i=1}^n d_i\log(m_i+1)$, which concludes the proof.
\end{proof}

\begin{remark}
If $P\in \mathbb{C}[\overline{X}_1,\ldots,\overline{X}_n]$ is homogeneous in each of the tuples $\overline{X}_i$, we may replace $m_i+1$ by $m_i$ in the above inequality. Indeed, evaluating in $X_{i,m_i} = 1$ does not change $m(P)$ nor $\lVert P\rVert$, so we may replace $\overline{X}_i$ by a $(m_i-1)$-tuple.
\end{remark}

We are finally able to prove the main result of this appendix.

\begin{proof}[Proof of Proposition \ref{proposition:comparison_fsmahler_norm_multi}]
We first prove the inequality
\[
    m_{\mathbb{S}_{m_1}\times\ldots\times \mathbb{S}_{m_n}}(P) \leqslant \log\lVert P\rVert + \sum_{i=1}^n d_i\log(m_i+1)
\]
exactly as in the proof of Corollary \ref{corollary:comparison_mahler_norm_multi}.

 For the other inequality, we again reason by induction on $n$. If $n=1$, the result follows directly from Lemma \ref{lemma:comparison_fsmahler_norm}. So, assume $n\geqslant 2$. Write $P = \sum\limits_{k\in\mathbb{N}^{m_n}} P_k \overline{X}_n^k$, where the $P_k$ are in $\mathbb{C}[\overline{X}_1,\ldots,\overline{X}_{n-1}]$. Let $l\in\mathbb{N}^{m_n}$ be such that $\lVert \mathbb{P}_l\rVert = \lVert P\rVert$. Then, we have
\[
    m_{\mathbb{S}_{m_1}\times\ldots\times \mathbb{S}_{m_n}} = \int_{\mathbb{S}_1\times\ldots\times\mathbb{S}_{m_{n-1}}} m_{S_{m_n}}(P(\overline{z}_1,\ldots,\overline{z}_{n-1},\overline{X})) d\eta_{m_1}(\overline{z}_1) \wedge \ldots \wedge d\eta_{m_{n-1}}(\overline{z_{n-1}})
\]
By Lemma \ref{lemma:comparison_fsmahler_norm}, we have for all $\overline{z}_1,\ldots,\overline{z}_{n-1}$,
\begin{equation*}
\begin{aligned}
    m_{S_{m_n}}(P(\overline{z}_1,\ldots,\overline{z}_{n-1},\overline{X}))
    & \geqslant \log\lVert P(\overline{z}_1,\ldots,\overline{z}_{n-1},\overline{X})\rVert - d_n\left(\log(m_n+1)+\frac{1}{2}\sum_{k=1}^{m_n-1}\frac{1}{k}\right)\\
    & \geqslant \log|P_l(\overline{z}_1,\ldots,\overline{z}_{n-1})| - d_n\left(\log(m_n+1)+\frac{1}{2}\sum_{k=1}^{m_n-1}\frac{1}{k}\right),
\end{aligned}
\end{equation*}
so by integrating, we get
\[
    m_{\mathbb{S}_{m_1}\times\ldots\times \mathbb{S}_{m_n}} \geqslant m_{\mathbb{S}_{m_n}}(P_l) - d_n\left(\log(m_n+1)+\frac{1}{2}\sum_{k=1}^{m_n-1}\frac{1}{k}\right).
\]
But, by induction hypothesis,
\[
    m_{\mathbb{S}_{m_n}}(P_l) \geqslant \log\lVert P_l\rVert - \sum_{i=1}^{n-1} d_i\left(\log(m_i+1)+\frac{1}{2}\sum_{k=1}^{m_i-1}\frac{1}{k}\right),
\]
where $\lVert P_l\rVert = \lVert P\rVert$ by assumption, which concludes the proof.
\end{proof}

\printbibliography

@article{zhang1995small,
  title={Small points and adelic metrics},
  author={Zhang, Shouwu},
  journal={Journal of Algebraic Geometry},
  volume={4},
  number={2},
  pages={281--300},
  year={1995},
}

@article{Charles2021ArithmeticAA,
author = {Charles, François},
year = {2017},
month = {03},
pages = {},
title = {Arithmetic ampleness and an arithmetic Bertini theorem},
volume = {54},
journal = {Annales scientifiques de l'{\'E}cole Normale Sup{\'e}rieure},
doi = {10.24033/asens.2488}
}

@article{Yuan_big_line_arithmetic_bundles_Siu_inequality,
author = "Yuan, Xinyi",
year = "2008",
month = {09},
pages = {},
title = "Big line bundles over arithmetic varieties",
volume = {173},
journal = "Inventiones mathematicae",
doi = {10.1007/s00222-008-0127-9}
}

@article{CASSAIGNE2000226,
title = {Hauteur des hypersurfaces et fonctions Zêta d'Igusa},
journal = {Journal of Number Theory},
volume = {83},
number = {2},
pages = {226-255},
year = {2000},
issn = {0022-314X},
doi = {https://doi.org/10.1006/jnth.1999.2490},
author = {Julien Cassaigne and Vincent Maillot},
}

@article{bost1994heights,
  title={Heights of projective varieties and positive Green forms},
  author={Bost, J-B and Gillet, Henri and Soul{\'e}, Christophe},
  journal={Journal of the American Mathematical Society},
  pages={903--1027},
  year={1994},
  publisher={JSTOR}
}

@article{maillot2000geometrie,
  title={G{\'e}om{\'e}trie d'Arakelov des vari{\'e}t{\'e}s toriques et fibr{\'e}s en droites int{\'e}grables},
  author={Maillot, Vincent},
  year={2000},
  journal={Mémoires de la Société Mathématique de France},
  publisher={Soci{\'e}t{\'e} math{\'e}matique de France}
}

@article{pazuki2024northcott,
  title={On the Northcott property for special values of L-functions},
  author={Pazuki, Fabien and Pengo, Riccardo},
  journal={Revista Matem{\'a}tica Iberoamericana},
  volume={40},
  number={1},
  pages={1--42},
  year={2024},
  publisher={EMS Press}
}

@article{faltings1991diophantine,
  title={Diophantine approximation on abelian varieties},
  author={Faltings, Gerd},
  journal={Annals of Mathematics},
  volume={133},
  number={3},
  pages={549--576},
  year={1991},
  publisher={JSTOR}
}

@article{Ben_Yaacov_unbdd_cont_FOL,
author = "{Ben Yaacov}, Itaï",
year = "2008",
month = {},
pages = {},
title = "Continuous first order logic for unbounded metric structures",
volume = {},
journal = "Journal of Mathematical Logic",
doi = {10.1142/S0219061308000737}
}

@misc{szachniewicz2023,
      title={Existential closedness of $\overline{\mathbb{Q}}$ as a globally valued field via Arakelov geometry}, 
      author={Michał Szachniewicz},
      year={2023},
      eprint={2306.06275},
      archivePrefix={arXiv},
      primaryClass={math.LO}
}

@misc{Adelic_curves_2,
      title={Arithmetic intersection theory over adelic curves}, 
      author={Huayi Chen and Atsushi Moriwaki},
      year={2021},
      eprint={2103.15646},
      archivePrefix={arXiv},
      primaryClass={math.AG}
}

@article{Zhang_thesis_inequality,
author = {Zhang, Shouwu},
year = {1995},
month = {01},
pages = {187-187},
title = {Positive Line Bundles on Arithmetic Varieties},
volume = {8},
journal = {Journal of The American Mathematical Society},
doi = {10.2307/2152886}
}

@book{Adelic_curves_1,
    author    = "Chen, Huayi and Moriwaki, Atsushi",
    title     = "Arakelov Geometry over Adelic Curves",
    year      = "2020",
    publisher = "Springer Singapore",
    address   = "",
    keywords  = "",
    doi = "https://doi.org/10.1007/978-981-15-1728-0"
}

@article{Chambert_Loir_survey,
author = "Chambert-Loir, Antoine",
year = "2021",
month = {},
pages = {},
title = "Arakelov Geometry, Heights, Equidistribution, and the Bogomolov Conjecture",
volume = {Arakelov Geometry and Diophantine Applications},
journal = "Springer International Publishing",
doi = {10.1007/978-3-030-57559-5_8}
}

@book{Hartshorne_AG,
    author = {Hartshorne, Robin},
    title = {Algebraic geometry},
    note = {Graduate Texts in Mathematics, No. 52},
    publisher = {Springer-Verlag},
    address = {New York},
    year = {1977}
}

@article{Lelong_Constantes_Universelles,
    author = "Pierre Lelong",
    title = "Mesure de Mahler et calcul de constantes universelles pour les polynômes de N variables",
    year = "1994",
    journal = {Mathematische Annalen},
    volume = "299",
    pages = "673-695"
}

@article {gualdi_hypersurfaces_in_toric_varieties,
    AUTHOR = {Gualdi, Roberto},
     TITLE = {Heights of hypersurfaces in toric varieties},
   JOURNAL = {Algebra Number Theory},
  FJOURNAL = {Algebra \& Number Theory},
    VOLUME = {12},
      YEAR = {2018},
    NUMBER = {10},
     PAGES = {2403--2443},
      ISSN = {1937-0652,1944-7833},
   MRCLASS = {14G40 (11G50 14M25)},
  MRNUMBER = {3911135},
MRREVIEWER = {Bin\ Zhang},
       DOI = {10.2140/ant.2018.12.2403}
}

@misc{gualdi_sombra_limit_heights,
      title={Limit heights and special values of the Riemann zeta function}, 
      author={Roberto Gualdi and Mart{\'i}n Sombra},
      year={2023},
      eprint={2304.01966},
      archivePrefix={arXiv},
      primaryClass={math.NT}
}

@article {burgos_philippon_sombra_toric_varieties,
    AUTHOR = {Burgos Gil, Jos{\'e} Ignacio and Philippon, Patrice and
              Sombra, Mart{\'i}n},
     TITLE = {Arithmetic geometry of toric varieties. {M}etrics, measures
              and heights},
   JOURNAL = {Ast{\'e}risque},
  FJOURNAL = {Ast{\'e}risque},
    NUMBER = {360},
      YEAR = {2014},
     PAGES = {vi+222},
      ISSN = {0303-1179,2492-5926},
      ISBN = {978-2-85629-783-4},
   MRCLASS = {14G40 (14M25 32P05 52A41)},
  MRNUMBER = {3222615},
MRREVIEWER = {Fabien\ Pazuki},
}

@InProceedings{Gubler_forms_and_currents,
author="Gubler, Walter",
editor="Baker, Matthew
and Payne, Sam",
title="Forms and Currents on the Analytification of an Algebraic Variety (After Chambert-Loir and Ducros)",
booktitle="Nonarchimedean and Tropical Geometry",
year="2016",
publisher="Springer International Publishing",
address="Cham",
pages="1--30",
abstract="Chambert-Loir and Ducros have recently introduced real differential forms and currents on Berkovich spaces. In these notes, we survey this new theory and we will compare it with tropical algebraic geometry.",
isbn="978-3-319-30945-3"
}

@phdthesis{gualdi:tel-01931089,
  TITLE = {{Height of cycles in toric varieties}},
  AUTHOR = {Gualdi, Roberto},
  URL = {https://theses.hal.science/tel-01931089},
  NUMBER = {2018BORD0139},
  SCHOOL = {{Universit{\'e} de Bordeaux ; Universitat internacional de Catalunya}},
  YEAR = {2018},
  MONTH = Sep,
  KEYWORDS = {Heights ; Toric varieties ; Arakelov geometry ; Ronkin functions ; Mixed integral ; Hauteurs ; Vari{\'e}t{\'e}s toriques ; G{\'e}om{\'e}trie d'Arakelov ; Fonctions de Ronkin ; Int{\'e}grale mixte},
  TYPE = {Theses},
  PDF = {https://theses.hal.science/tel-01931089/file/GUALDI_ROBERTO_2018.pdf},
  HAL_ID = {tel-01931089},
  HAL_VERSION = {v1},
}

@article{gualdi_sombra_codimension2,
    author = {Roberto Gualdi and Mart{\'i}n Sombra},
    title = {Heights of codimension-2 complete intersections in toric varieties},
    year = {2024},
    journal = {Unpublished note}
}

@misc{basics_of_gvfs,
      title={Globally valued fields: foundations}, 
      author={Itaï {Ben Yaacov} and Pablo Destic and Ehud Hrushovski and Michał Szachniewicz},
      year={2024},
      eprint={2409.04570},
      archivePrefix={arXiv},
      primaryClass={math.LO}
}

@book{chenpositivity,
  title={Positivity in Arakelov Geometry over Adelic Curves: Hilbert-Samuel Formula and Equidistribution Theorem},
  author={Chen, Huayi and Moriwaki, Atsushi},
  publisher={Springer},
  year={2024}
}

@misc{dimitrov_habegger_galois_orbits_of_torsion,
      title={Galois orbits of torsion points near atoral sets}, 
      author={Vesselin Dimitrov and Philipp Habegger},
      year={2022},
      eprint={1909.06051},
      archivePrefix={arXiv},
      primaryClass={math.NT}
}

@misc{yuan_zhang_adeliclinebundlesquasiprojective,
      title={Adelic line bundles on quasi-projective varieties}, 
      author={Xinyi Yuan and Shouwu Zhang},
      year={2024},
      eprint={2105.13587},
      archivePrefix={arXiv},
      primaryClass={math.NT} 
}

@misc{qu2023arithmeticdemaillyapproximationtheorem,
      title={Arithmetic Demailly Approximation Theorem}, 
      author={Binggang Qu and Hang Yin},
      year={2023},
      eprint={2208.13230},
      archivePrefix={arXiv},
      primaryClass={math.NT}
}

@article {moriwaki_finitely_generated_fields,
    AUTHOR = {Moriwaki, Atsushi},
     TITLE = {Arithmetic height functions over finitely generated fields},
   JOURNAL = {Invent. Math.},
  FJOURNAL = {Inventiones Mathematicae},
    VOLUME = {140},
      YEAR = {2000},
    NUMBER = {1},
     PAGES = {101--142},
      ISSN = {0020-9910,1432-1297},
   MRCLASS = {11G50 (11G10 11G35 14G40 14K15)},
  MRNUMBER = {1779799},
MRREVIEWER = {Carlo\ Gasbarri},
       DOI = {10.1007/s002220050358}
}

@misc{stoffel_thesis,
           title = {Real-valued differential forms on non-archimedean abelian varieties with totally degenerate reduction},
          author = {Martino Stoffel},
            year = {2021},
           month = {1},
        keywords = {Non-archimedean geometry, Tropical Dolbeault cohomology, Abelian varieties with totally degenerate reduction, Tropical cycle class map, First Chern form},
             url = {https://epub.uni-regensburg.de/44263/},
        abstract = {In the first part of the thesis we consider an abelian variety A with totally degenerate reduction over an algebraically closed, non-trivially valued complete non-archimedean field K. This can be seen as a non-archimedean analogue of a complex torus. We give a lower bound for the dimension of the tropical Dolbeault cohomology groups of A{\^{ }}an and show in particular that they are non-trivial in non-negative degrees less than or equal to the dimension of A. The lower bound equals the dimension of the Dolbeault cohomology groups of a complex torus.

In the second part of the thesis we consider a smooth variety X over K. We study the tropical cycle class map from the Chow ring of X to tropical Dolbeault cohomology constructed by Yifang Liu. For a smoothly metrized line bundle (L, {\ensuremath{|}}{\ensuremath{|}} {\ensuremath{|}}{\ensuremath{|}}) on X{\^{ }}an we show that the image of the first Chern class of L under the tropical cycle class map equals the (1,1)-cohomology class of the first Chern form associated to (L, {\ensuremath{|}}{\ensuremath{|}} {\ensuremath{|}}{\ensuremath{|}}) using curvature.}
}

@incollection{remond_multiprojective_diophantine_geometry,
  author      = "Gaël Rémond",
  title       = "Géométrie diophantienne multiprojective",
  editor      = "Yuri V. Nesterenko and Patrice Philippon",
  booktitle   = "Introduction to Algebraic Independence Theory",
  publisher   = "Springer",
  address     = "",
  year        = 2001,
  pages       = "95-132",
  chapter     = 7,
}

\end{document}